\newtheorem{Theorem}{Theorem}
\newtheorem{Proposition}[Theorem]{Proposition}
\newtheorem{Lemma}[Theorem]{Lemma}
\newtheorem{Corollary}[Theorem]{Corollary}
\newtheorem{Claim}[Theorem]{Claim}
\theoremstyle{definition} 
       \newtheorem{Definition}[Theorem]{Definition}
\theoremstyle{remark} 
 \newtheorem{Remark}[Theorem]{Remark}
\def\Fp{{\overline{\mathbb F}_p}}
\def\K{\mathbb K}
\begin{document}

\title{Non-standard components of the character variety for a family of
Montesinos knots}

\author{Luisa Paoluzzi and Joan Porti}
\date{\today}

\maketitle

\begin{abstract}
\vskip 2mm
We show that there are Montesinos knots with $n+1\geq 4$ tangles whose 
character varieties contain arbitrarily many irreducible components of 
dimension $d$ for any $1\le d\le n-2$. Moreover, these irreducible components 
can be chosen so that the trace of the meridian is non-constant.

\noindent\emph{AMS classification: } Primary 57M25; Secondary 20C99; 57M50.

\vskip 2mm

\noindent\emph{Keywords:} Character varieties, Montesinos knots, bending,
parabolic representations.

\end{abstract}

\section{Introduction}
\label{section:introduction}

The study of character varieties associated to representations of $3$-manifold 
groups into $SL_2(\mathbb{C})$ has received great attention in recent years.
Indeed, the understanding of the character variety of a manifold may give some 
insight on the structure of the manifold itself and notably on the existence of
essential surfaces, by means of Culler-Shalen theory \cite{CullerShalen}. On 
the other hand, little is known about $SL_2$-character varieties over fields of 
positive characteristic. It follows from work of Gonzalez-Acu\~na and 
Montesinos \cite{AcunaMontesinos}, that the defining polynomial equations for 
an $SL_2$-character variety -which have coefficients in ${\mathbb Z}$- are the 
same over any field of characteristic different from $2$. 

Standard results in algebraic geometry ensure that affine variety defined by
polynomials with coefficients in ${\mathbb Z}$ has the same geometric 
properties, like the dimension and the number of irreducible components, when 
considered over $\mathbb{C}$ or over an algebraically closed field of 
characteristic $p$, for almost all primes $p$. This follows basically from the
fact that the dimension of an affine variety (that is, the maximal dimension of
its irreducible components) and its irreducible components can be computed
algorithmically (see, for instance, \cite[Chapter 9]{CLoS} for the dimension, 
and \cite[page 209]{CLoS} for the decomposition into irreducible components).

In our situation, this says that for almost every prime $p>2$ the 
$SL_2$-character variety for a manifold over the algebraic closure of 
the prime field ${\mathbb F}_p={\mathbb Z}/p{\mathbb Z}$ looks precisely like 
the $SL_2$-character variety over $\mathbb{C}$. It is, however, possible that 
for some exceptional prime $p\neq 2$ this is not the case, and the number of 
its irreducible components or their dimensions might change. 

Our first motivation for wanting to establish the occurrence of this phenomenon 
comes from the fact that one may hope to find, in these special character 
varieties, ``new" curves whose ideal points are associated to essential 
surfaces which cannot be detected in characteristic $0$ (see  
\cite{SchanuelZhang} for examples of essential surfaces presenting this 
behaviour).

Although coming upon an actual example of surfaces detected only by curves in
characteristic $p$ appears to be extremely hard, work by Riley 
\cite{RileyI,RileyII} seems to provide evidence that this kind of phenomenon 
does happen, that is, there might be curves that appear only in certain 
characteristics. In his paper \cite{RileyI}, Riley studied parabolic 
representations (i.e. where all meridians are sent to matrices with trace $\pm 
2$) in characteristic $p$ for the group of a specific Montesinos knot with four 
tangles, and showed that the group admitted a one-parameter family of 
non-conjugate parabolic representations for each prime $p$.

The straightforward observation that in characteristic $p$ parabolic elements 
have order $p$ implies that the orbifold whose underlying topological space is 
the $3$-sphere and whose singular set is the given Montesinos knot with order 
of ramification equal to $p$ admits ``several" representations in 
characteristic $p$, and possibly ``more" than in characteristic $0$. We shall
give a precise meaning to this statement in Section~\ref{section:Fp}.

The above observation motivated our study of the character variety of this and  
other Montesinos knots in characteristic $0$. We are mainly interested in 
understanding the geometric reason behind the existence of Riley's 
representations, for its comprehension could lead to prove the existence of 
extra representations in other cases. This turns out to be related to the 
particular structure of the Montesinos knot considered by Riley, which allows 
to perform what Riley calls (in a subsequent paper \cite{RileyII}, again on 
parabolic representations of knots) the \emph{commuting trick}. The commuting 
trick boils down to the elementary remark that crossings between two arcs whose 
associated generators commute in a given representation are nugatory and can 
be arbitrarily changed. Of course, Riley was not the first to exploit this 
basic fact as Riley himself remarks, cf.~\cite{MagnusPeluso}.

It is well-known that an analysis of the character variety of the knot
can be carried out explicitly only for knots whose groups have a very limited 
number of generators, due to the computational complexity involved. It is thus 
helpful to find indirect methods to deduce properties (like the number of 
irreducible components, their dimensions, and their intersections) of the 
character variety. We will consider a family of Montesinos knots with at least 
$4$ tangles that we shall call \emph{Montesinos knots of Kinoshita-Terasaka 
type} (see Section~\ref{section:Montesinos} for a precise definition).

Using this elementary remark and \emph{bending} (see 
Section~\ref{section:bending}), we are able to prove for this class of knots 
the existence of irreducible components of large dimension in their character 
variety which are \emph{non-standard} in that they are different from the three 
\emph{standard} ones: the distinguished curve containing the holonomy 
character, the abelian component, and the Teichm\"uller components whose points 
are associated to representations of the base of the Seifert fibration of the 
orbifold whose underlying topological space is the $3$-sphere and whose 
singular set is the Montesinos knot with order of singularity equal to $2$. 
These Teichm\"uller components have dimension at most $n-2$ and, since the 
meridian is mapped to a hyperbolic isometry of order two, the trace of the 
meridian is constant equal to $0$ on them. Our first main result can thus be 
stated as follows:

\begin{Theorem}
\label{theorem:simplified}
Let $K$ be a Montesinos knot of Kinoshita-Terasaka type with $n+1$ tangles,
$n\ge3$. Its character variety contains (at least) two irreducible components 
of dimension $\ge n-2$ which are not contained in the hyperplane
defined by the condition that the trace of the meridian is equal to $0$.
\end{Theorem}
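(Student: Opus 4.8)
The plan is to realise the non-standard components by a \emph{bending} construction seeded by representations that the \emph{commuting trick} makes available, and to read off both their dimension and the non-vanishing of the meridian trace directly from the tangle decomposition. First I would write the knot group $\pi_1(S^3\setminus K)$ through the decomposition of $K$ into its $n+1$ rational tangles: the tangles are glued along Conway spheres, which meet the knot complement in four-punctured spheres, and the Wirtinger generators are meridians of the strands entering and leaving each tangle. The point of the Kinoshita--Terasaka type is that at the junctions between consecutive tangles the two relevant strands can be made to carry \emph{commuting} meridians; by the commuting trick the crossings there are then nugatory, so on this locus the representations of $\pi_1(S^3\setminus K)$ coincide with those of a simplified link obtained by changing those crossings, whose characters are understood explicitly. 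This is what will give rise to the bending directions.

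Next I would produce an explicit \emph{irreducible} seed representation $\rho_0\colon\pi_1(S^3\setminus K)\to SL_2(\mathbb{C})$ lying on this commuting locus and satisfying $\operatorname{tr}\rho_0(\mu)\neq 0$ for the meridian $\mu$; the freedom in choosing the common eigenbasis at each commuting junction, together with the simplified structure available there, makes such a $\rho_0$ easy to exhibit, and two genuinely inequivalent choices (distinguished by the eigenvalue and commuting pattern at the junctions) will seed two distinct families. At each commuting junction the image under $\rho_0$ of the corresponding four-punctured-sphere subgroup is abelian and non-central, hence has centraliser a one-dimensional torus $\mathbb{C}^*\subset SL_2(\mathbb{C})$; bending across that surface, i.e. conjugating the tangle on one side by an element $z$ of this centraliser while leaving the other side fixed, yields a deformation $\rho_z$ that is still a representation because the gluing data commute. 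Carrying out the independent bendings simultaneously produces a family which I expect to be of dimension $n-2$.

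I would then check that this family spans an irreducible component of the character variety of dimension $\ge n-2$ with the required properties. Since bending conjugates the meridian (which can be homotoped off each bending surface) by a fixed element, $\operatorname{tr}\rho_z(\mu)=\operatorname{tr}\rho_0(\mu)\neq 0$ is constant along the whole family, so the component is \emph{not} contained in the hyperplane $\{\operatorname{tr}(\mu)=0\}$. This simultaneously separates it from the Teichm\"uller components (where the trace is identically $0$), from the abelian component (whose characters are reducible, whereas $\rho_0$ is irreducible), and from the distinguished curve containing the holonomy character, on which the meridian trace is non-constant whereas on our family it is a nonzero constant. The two seed choices then yield the two asserted components.

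The main obstacle I expect is the dimension count: showing that the bendings are genuinely independent and are not collapsed either by overall conjugation or by the ``closure'' relation coming from the cyclic arrangement of the tangles, so that the image in the character variety has dimension exactly $\ge n-2$ rather than something smaller, and that this is precisely where the count $n-2$ is pinned down. I would handle this by computing the Zariski tangent space of the family at $\rho_0$ — equivalently, by exhibiting $n-2$ linearly independent bending cocycles in $H^1(\pi_1(S^3\setminus K);\mathfrak{sl}_2(\mathbb{C})_{\operatorname{Ad}\rho_0})$ supported near the distinct junctions — and by verifying on explicit matrices that distinct bending parameters produce non-conjugate characters, which would also confirm that the two families lie in two different irreducible components.
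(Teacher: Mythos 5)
Your overall strategy (commuting trick plus bending) is the same one the paper uses, but the way you implement the bending cannot work, and two of your key claims are false as stated. Consider first the bending surfaces. You bend along four-punctured Conway spheres of $K$ at a seed where their subgroups have abelian image, and you want $n-2$ independent parameters from this. But each such sphere subgroup contains conjugates of the meridians of \emph{two} different junctions, so demanding abelian image on enough spheres to produce $n-2$ parameters puts the images of essentially all the meridians $\mu_i$ into the centraliser of a single non-central element; that centraliser is abelian, each tangle group is generated by two of these meridians, hence $\rho_0$ itself would be abelian, in particular reducible, and the construction collapses. Moreover, at the one sphere where the commuting trick legitimately produces an abelian image --- the sphere around the tangle with even $\alpha_{n+1}$ (in the paper the trick is applied at this single tangle, not at every junction) --- bending is a \emph{trivial} deformation: the tangle side also has abelian image inside the same centraliser $A$, so conjugating it by $z\in A$ changes nothing, while conjugating the other side gives a globally conjugate representation. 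The paper escapes both problems by passing to the composite knot $K'=B(\beta_1/\alpha_1)\sharp\cdots\sharp B(\beta_n/\alpha_n)$ (your ``simplified link''): the commuting trick identifies the quotients $\pi_1(K)/H\cong\pi_1(K')/H'=\Gamma$, and the bending is performed along the $n-1$ connect-sum spheres of $K'$, which meet $K'$ in \emph{two} points, so their subgroups are cyclic --- abelian for free --- while both complementary sides stay irreducible. Those bending surfaces are not surfaces in the exterior of $K$ at all; the deformations only make sense at the level of the common quotient $\Gamma$.

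Second, the dimension count and the claim of two components. A bent representation must still satisfy the closure relation $[\rho(\mu_1),\rho(\mu_{n+1})]=1$ in order to define a point of $X(\Gamma)\subset X(K)$, and inside the $(n-1)$-parameter bending family this condition has codimension $1$ when $\rho_0(\mu)$ is parabolic (two parabolics commute iff they share their fixed point) but codimension $2$ when it is not (they must share an axis, i.e.\ two fixed points; this is the content of the paper's cross-ratio lemma). So a fixed non-parabolic seed yields only an $(n-3)$-dimensional family: the $(n-2)$-dimensional family with constant meridian trace outside $\{0,\pm2\}$ that you describe does not exist by this construction. The paper reaches dimension $n-2$ in the non-parabolic case only by additionally letting the restrictions to the $2$-bridge factors move along their character curves, which is exactly why the meridian trace is \emph{non-constant} on those components --- the opposite of your claim; its constant-trace components are the parabolic ones, with $\tau_\mu\equiv\pm 2$. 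Finally, verifying that distinct bending parameters give non-conjugate characters shows only that your two families are distinct subsets; it does not show they lie in distinct irreducible components, since both could sit inside one larger component. The paper separates its two components by the dichotomy constant versus non-constant $\tau_\mu$, combined with the upper bound $\dim\le n-2$ for the non-parabolic component (proved by intersecting with $\{\tau_\mu=0\}$ and mapping to the character variety of the Coxeter base of the associated Seifert-fibred orbifold), or alternatively by invoking Ohtsuki--Riley--Sakuma to obtain many components; exhibiting $n-2$ independent bending cocycles in $H^1$ bounds the dimension only from below and cannot play this role.
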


This resumes two stronger and more precise statements
(Theorems~\ref{theorem:parabolic} and \ref{theorem:non parabolic}) whose proofs 
will be provided in Sections~\ref{section:parabolic} and \ref{section:bound}. 
In particular, one can establish the precise dimension of these components and 
say something more on their number (see Theorems~\ref{theorem:non parabolic} 
and \ref{theorem:dim par}).

\begin{Theorem}
\label{theorem:many components}
For all integers $m>0$ and $n \ge 3$ there is a Montesinos knot of 
Kinoshita-Terasaka type with $n+1$ tangles whose character variety contains at 
least $m$ irreducible components of dimension $n-2$. 
\end{Theorem}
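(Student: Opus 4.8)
The plan is to prove Theorem~\ref{theorem:many components} as a counting refinement of Theorem~\ref{theorem:simplified}, leveraging the freedom in choosing the tangle parameters of a Kinoshita-Terasaka type Montesinos knot. Theorem~\ref{theorem:simplified} already guarantees non-standard components of dimension $\ge n-2$; the sharper Theorems~\ref{theorem:non parabolic} and~\ref{theorem:dim par} promised in the text should pin the dimension to exactly $n-2$ and give control on the \emph{number} of such components. So the first step is to recall the explicit construction of these non-standard components via bending: each such component arises from bending a base representation along an essential torus (or annulus) coming from the decomposition of the knot complement into tangle pieces, and the dimension of the bending deformation is governed by the number of independent commuting/abelian splittings available—this is where the \emph{commuting trick} enters. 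I would isolate the combinatorial datum (essentially the tangle slopes or rational parameters $\beta_i/\alpha_i$ attached to each of the $n+1$ tangles) that indexes a distinct bending family, and verify that distinct data yield distinct, genuinely non-conjugate families, hence components that are not shared.

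Second, I would set up the counting. For fixed $n\ge 3$, the idea is that a single Kinoshita-Terasaka type knot with well-chosen tangle parameters produces several inequivalent bending components of dimension $n-2$, because the character variety ``sees'' each admissible choice of how the $SL_2(\mathbb{C})$ representation restricts on the pieces. The number of such components grows with the arithmetic complexity of the tangle parameters: by increasing the absolute values of the rational parameters (equivalently, choosing tangles with more twists), one increases the number of conjugacy classes of the relevant subgroup representations feeding the bending, and therefore the number of resulting components. Thus, given the target $m$, I would choose the tangle parameters large enough (in terms of numerator/denominator size, or number of admissible root configurations of the relevant trace polynomials) so that the count of distinct dimension-$(n-2)$ components exceeds $m$.

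Concretely, the mechanism producing multiplicity should be the roots of a system of trace equations: each component corresponds to a choice of eigenvalue data on the tangle generators satisfying the commuting/bending compatibility conditions, and these choices are parametrized by solutions to certain polynomial (e.g. Chebyshev-type or cyclotomic-flavored) equations whose number of solutions can be made $\ge m$ by suitable parameter selection. I would argue that (a) each such solution yields a bona fide component of dimension exactly $n-2$ (using the dimension bound from Theorem~\ref{theorem:non parabolic} as an upper bound and an explicit bending count as a matching lower bound), and (b) distinct solutions give distinct components, by exhibiting a regular function on the character variety—such as a trace coordinate of one of the tangle generators—that takes distinct generic values on the different components.

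The main obstacle I anticipate is step (b): proving that the $\ge m$ candidate families are genuinely \emph{distinct irreducible components} rather than a single component traversed by a moving parameter, or pieces lying in the closure of one larger component. To handle this I would rely on separating trace functions together with the dimension rigidity (all candidates have the same dimension $n-2$, so none can be a proper subvariety of another), and argue that generic points on different families are non-conjugate because they differ on an invariant trace coordinate; irreducibility of each family should follow from its description as the image of an irreducible bending orbit (a product of $\mathbb{C}^*$'s or tori acting by bending, modulo conjugation). The remaining routine verification—that the chosen tangle parameters indeed make the relevant solution count exceed $m$—is a matter of selecting parameters with enough arithmetic room and does not present conceptual difficulty.
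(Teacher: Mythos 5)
Your skeleton (bending gives the lower bound, the refined dimension results pin the dimension at $n-2$, and one then separates the resulting families) matches the paper's, but the engine you propose for producing \emph{many} components is not the paper's and, as stated, it fails. The paper's multiplicity input is the theorem of Ohtsuki--Riley--Sakuma \cite{ORK}: there exist $2$-bridge knots whose character varieties contain arbitrarily many $1$-dimensional irreducible components carrying irreducible characters. Choosing such knots as the tangles $B(\beta_i/\alpha_i)$, $i\le n$, every $n$-tuple $(Z_1,\dots,Z_n)$ of such curves yields, through bending on the \emph{composite} knot $K'$ and the transfer to $K$ via the common quotient $\Gamma$ (Propositions~\ref{proposition:quotient} and \ref{proposition:exoticcomponents}, Lemma~\ref{lemma:upper_bound}, Theorem~\ref{theorem:non parabolic}), a component of $X(K)$ of dimension exactly $n-2$; distinct tuples give distinct components because the Zariski closures of the restrictions to the factors recover the tuple. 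Your substitute --- ``more twists $\Rightarrow$ more roots of trace equations $\Rightarrow$ more components'' --- is not a valid principle: the number of irreducible components of $X(B(\beta/\alpha))$ is not monotone in $\alpha$ (twist knots have arbitrarily large $\alpha$, yet their non-abelian character variety is a single irreducible curve), so ``selecting parameters with enough arithmetic room'' is precisely the non-routine step, and it is where the paper invokes a genuine theorem. Moreover, the components that count here have \emph{non-constant} meridian trace, so they cannot be indexed by isolated eigenvalue data of the tangle generators: those generators are all meridians, hence conjugate and of equal trace, so fixing their eigenvalues would confine you to a hyperplane $\{\tau_\mu=c\}$; for the same reason a meridian trace coordinate cannot separate the components, as you propose in step (b).

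Two further points. The discrete root count you describe does govern the \emph{parabolic} picture (the number of irreducible parabolic characters of $B(\beta/\alpha)$ grows with $\alpha$), but the resulting components $Y$ are irreducible components of the slice $X_{\rm par}(K)$, not of $X(K)$; nothing excludes their lying inside higher-dimensional components of $X(K)$, which is exactly why the paper proves Theorem~\ref{theorem:many components} through the non-parabolic components of Theorem~\ref{theorem:non parabolic} rather than through Theorems~\ref{theorem:parabolic} and \ref{theorem:dim par}. Also, the bending deformations are not performed along tori or annuli in the complement of $K$ itself: they live in $X(K')$ for the associated connected sum $K'$, and only the characters satisfying the commuting condition on $\rho(\mu_1)$ and $\rho(\mu_{n+1})$ --- the two conditions responsible for the drop from $n$ to $n-2$ --- descend to $K$ via $\Gamma$. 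If you wish to avoid \cite{ORK}, your cyclotomic intuition can be salvaged by taking all first $n$ tangles to be $(2,q)$-torus tangles, $\beta_i/\alpha_i=1/q$: then $X(B(1/q))$ genuinely has $(q-1)/2$ curves of irreducible characters, indexed by the trace $2\cos\bigl((2l+1)\pi/q\bigr)$ of the \emph{singular fibre} (not of a meridian), and $q\ge 2m+1$ gives the count; but this must be stated and proved, not assumed, so as written the proposal has a genuine gap.
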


This improvement on Theorem~\ref{theorem:simplified} is a consequence of a
result by Ohtsuki, Riley and Sakuma on the character varieties of $2$-bridge
knots which shows that the number of their components can be arbitrarily large 
\cite{ORK}.

The same methods allow to prove a generalisation of Theorem~\ref{theorem:many 
components} in which the irreducible components of dimension $n-2$ are replaced 
by irreducible components of dimension $d$ for any $1\le d\le n-2$ (see again 
Theorem~\ref{theorem:non parabolic}). 

Although the non-standard components which are the object of
Theorem~\ref{theorem:simplified} are obtained by bending, as already observed,
the commuting trick is responsible for the existence of other non-standard
components: this is for instance the case of the \emph{r-components} detected 
by Mattman in the character variety of certain pretzel knots 
\cite[Thm 1.6]{Mattman}. The geometric interpretation behind the existence of 
Mattman's non-standard components will be briefly discussed in 
Section~\ref{section:Mattman}.

The paper is organised as follows. In Section~\ref{section:characters} we shall 
recall some basic facts about character varieties. The class of Montesinos 
knots we shall be dealing with will be introduced in 
Section~\ref{section:Montesinos}: there we shall also see that the knots in 
this class are closely related to connected sums of $2$-bridge knots. The main
feature of connected sums of knots is that they admit several representations
obtained from the representations of the single components by \emph{bending}: 
this procedure will be described in Section~\ref{section:bending}. 
Section~\ref{section:parabolic} and Section~\ref{section:non parabolic}
will be devoted to the construction respectively of non-standard components of 
parabolic characters and non-standard components of non-parabolic characters, 
whose number can be arbitrarily large thus proving 
Theorems~\ref{theorem:simplified} and \ref{theorem:many components}. The
contents of Section~\ref{section:bound} are more technical, and allow to
establish the exact dimension of these non-standard components: the analysis of
the parabolic components and the non-parabolic ones occupy a subsection each
(Subsections~\ref{subsect:nonparabolic} and 
\ref{section:more_on_intersections}). Finally, we shall discuss Mattman's 
non-standard components (Section~\ref{section:Mattman}) and comment on the 
character varieties of Montesinos knots of Kinoshita-Terasaka type over fields 
of positive characteristic (Section~\ref{section:Fp}).

\section{Character varieties}
\label{section:characters}

The variety of representations of a finitely generated group $G$ is the set of 
representations of $G$ in $SL_2(\mathbb{C})$:
$$
R(G)=\hom(G,SL_2(\mathbb{C})).
$$
Since $G$ is finitely generated, $R(G)$ can be embedded in a product 
$SL_2(\mathbb{C})\times\cdots\times SL_2(\mathbb{C})$ by mapping each 
representation to the image of a generating set. In this way $G$ is an affine 
algebraic set, whose defining polynomials are induced by the relations of a 
presentation of $G$ and whose coefficients are thus in $\mathbb{Z}$. By
considering Tietze transformations, it is not hard to see that this structure 
is independent of the choice of presentation of $G$ up to isomorphism, 
cf.~\cite{LubotzkyMagid}. Note that what stated above remains valid if 
$\mathbb{C}$ is replaced by any other field $\K$, that we shall assume to be 
algebraically closed for simplicity. In particular, the defining relations for 
$R(G)$ are the same over every field. We shall write $R(G)_\K$ whenever we wish 
to stress that we are considering representations in $SL_2(\K)$. When the
subscript $ _\K$ is omitted, by convention $\K=\mathbb{C}$.

Given a representation $\rho\in R(G)$, its character is the map
$\chi_{\rho}:G\to \mathbb C$ defined by 
$\chi_{\rho}(\gamma)=\operatorname{trace}(\rho(\gamma))$, $\forall\gamma\in G$. 
The set of all characters is denoted by $X(G)$.

Given an element $\gamma\in G$, we define the map
$$
\begin{array}{rcl}
     \tau_\gamma: X(G) & \to & \mathbb C \\
              \chi & \mapsto & \chi(\gamma)
\end{array}.
$$

\begin{Proposition}[\cite{CullerShalen,AcunaMontesinos}]
\label{proposition:X(G)}
The set of characters $X(G)$ is an affine algebraic set defined over 
$\mathbb Z$, which embeds in $\mathbb C^N$ with coordinate functions
$(\tau_{\gamma_1}, \ldots, \tau_{\gamma_N})$ for some 
$\gamma_1,\ldots,\gamma_N\in G$.  
\end{Proposition}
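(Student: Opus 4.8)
The plan is to realise $X(G)$ as the categorical quotient of the representation variety by the conjugation action of $SL_2(\mathbb C)$, and to produce the explicit embedding into $\mathbb C^N$ through a finite set of trace functions. First I would fix a finite generating set $g_1,\dots,g_k$ of $G$; as noted in the excerpt, this presents $R(G)$ as an affine algebraic set cut out of $SL_2(\mathbb C)^k$ by polynomials with coefficients in $\mathbb Z$, and $SL_2(\mathbb C)$ acts on $R(G)$ by simultaneous conjugation, leaving every trace function $\tau_\gamma$ invariant. Two representations then have the same character precisely when they are not separated by the invariant trace functions, so the characters are exactly the points of the invariant-theoretic quotient.

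The combinatorial heart of the argument is the classical Fricke--Vogt reduction. Using only the Cayley--Hamilton identity in $SL_2$, which yields $\operatorname{tr}(AB)+\operatorname{tr}(AB^{-1})=\operatorname{tr}(A)\operatorname{tr}(B)$ and $\operatorname{tr}(A^{-1})=\operatorname{tr}(A)$ with integer coefficients, I would prove by induction on word length that for every $\gamma\in G$ the function $\tau_\gamma$ is a polynomial with coefficients in $\mathbb Z$ in the finitely many functions $\tau_{g_{i_1}\cdots g_{i_r}}$ with $1\le i_1<\cdots<i_r\le k$ and $r\le 3$. Taking $\gamma_1,\dots,\gamma_N$ to be this finite list of short words, I obtain a map $t=(\tau_{\gamma_1},\dots,\tau_{\gamma_N})\colon X(G)\to\mathbb C^N$, and the reduction lemma shows at once that $t$ is injective: a character is a function on all of $G$, and each of its values is recovered as a fixed $\mathbb Z$-polynomial in the coordinates $\tau_{\gamma_i}$.

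It remains to see that the image $t(X(G))$ is a Zariski-closed subset of $\mathbb C^N$ defined over $\mathbb Z$, so that $X(G)$ inherits the structure of an affine algebraic set. Here I would invoke the reductivity of $SL_2(\mathbb C)$: the ring of conjugation-invariant regular functions on $R(G)$ is finitely generated (Hilbert--Nagata), the categorical quotient $R(G)/\!/SL_2(\mathbb C)$ is an affine variety surjected onto by $R(G)$, and by the reduction lemma this invariant ring is generated precisely by the trace functions $\tau_{\gamma_1},\dots,\tau_{\gamma_N}$. Consequently $t$ identifies $X(G)$ with this quotient, realised as a closed affine subvariety of $\mathbb C^N$; since all the trace identities and the defining equations of $R(G)$ have integer coefficients, the defining ideal of the image can be taken in $\mathbb Z[x_1,\dots,x_N]$.

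The step I expect to be the main obstacle is the closedness of the image. Injectivity and the $\mathbb Z$-structure follow formally once the Fricke--Vogt reduction is in hand, but showing that the set-theoretic image of $R(G)$ under the trace map is genuinely closed --- rather than merely constructible --- requires the geometric input that $SL_2(\mathbb C)$ is reductive and that the trace functions exhaust the invariant ring. Since this proposition is stated as known and attributed to \cite{CullerShalen,AcunaMontesinos}, I would in practice cite their treatment for this point and confine the exposition to sketching the trace-identity reduction that produces the explicit coordinates.
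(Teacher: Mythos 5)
The paper gives no proof of this proposition at all: it is imported verbatim from \cite{CullerShalen,AcunaMontesinos}, so the only meaningful comparison is with the arguments in those references. Your outline is essentially that standard argument --- the Fricke--Vogt reduction of all trace functions to the traces of finitely many short words, injectivity of the resulting map on characters, and a geometric argument for closedness of the image --- except that you package the closedness step in the invariant-theoretic style of \cite{LubotzkyMagid}, whereas Culler--Shalen prove closedness of $t(R(G))$ by a direct argument in their \S 1.4 and Gonz\'alez-Acu\~na--Montesinos work in an elementary trace-theoretic way (which is what produces the explicit $\mathbb Z$-equations this paper needs later).

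There is, however, one concrete flaw in your GIT branch: you claim that ``by the reduction lemma this invariant ring is generated precisely by the trace functions.'' The Fricke--Vogt reduction only shows that the ring generated by \emph{all} trace functions $\tau_\gamma$, $\gamma\in G$, is already generated by the traces of the short words; it says nothing about an arbitrary conjugation-invariant regular function on $R(G)$ being a polynomial in traces. That stronger statement is the first fundamental theorem of invariant theory for tuples of $2\times 2$ matrices (classical for $SL_2$, Procesi in general), and your argument needs it twice: once to identify characters with points of $R(G)/\!/SL_2(\mathbb C)$ (points of the categorical quotient are separated by \emph{all} invariants, not just traces), and once to identify that quotient with the image of $t$ in $\mathbb C^N$, which is where closedness comes from. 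Without this input the chain ``quotient is affine $\Rightarrow$ image is closed'' does not close. Separately, the $\mathbb Z$-definedness does not follow merely because the equations of $R(G)$ and the trace identities have integer coefficients --- passing to invariants and images involves elimination; in characteristic zero this is rescued by flat base change from $\mathbb Q$ and clearing denominators, but the finer integrality that this paper actually exploits (the same polynomials define $X(G)_{\Fp}$, Proposition~\ref{proposition:X(G)Fp}) genuinely requires the careful treatment of \cite{AcunaMontesinos}, which is why that reference is cited.
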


The affine algebraic set $X(G)$ is called the \emph{character variety} of $G$:
it can be interpreted as the algebraic quotient of $R(G)$ by the conjugacy 
action of $PSL_2(\mathbb{C})=SL_2(\mathbb{C})/\mathcal{Z}(SL_2(\mathbb{C}))$.

Note that the set $\{\gamma_1,\ldots,\gamma_N\}$ in the above proposition can 
be chosen to contain a generating set of $G$. For $G$ the fundamental group of 
a knot exterior, we will then assume that it always contains a representative 
of the meridian.

A careful analysis of the arguments in \cite{AcunaMontesinos} shows that 
Proposition~\ref{proposition:X(G)} still holds if $\mathbb C$ is replaced by
any algebraically closed field, provided that its characteristic is different
from $2$. Let $\mathbb F_p$ denote the field with $p$ elements and $\Fp$ its
algebraic completion. We have:

\begin{Proposition}[\cite{AcunaMontesinos}]
\label{proposition:X(G)Fp}
Let $p>2$ be an odd prime number. The set of characters $X(G)_{\Fp}$ associated 
to representations of $G$ over the field $\Fp$ is an algebraic set which embeds 
in $\Fp ^N$ with the same coordinate functions $(\tau_{\gamma_1}, \ldots, 
\tau_{\gamma_N})$ seen in Proposition~\ref{proposition:X(G)}. Moreover,
$X(G)_\Fp$ is defined by the same polynomials over $\mathbb Z$ as 
$X(G)_\mathbb{C}$.  
\end{Proposition}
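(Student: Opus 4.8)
The plan is to revisit the construction of González-Acuña and Montesinos \cite{AcunaMontesinos} (itself a descendant of the classical Fricke--Vogt theory underlying Proposition~\ref{proposition:X(G)}) and to check that every algebraic manipulation it uses is already defined over $\mathbb{Z}$ and remains valid when $\mathbb{C}$ is replaced by $\Fp$. The entire construction rests on the $SL_2$ trace identities, which have integer coefficients and hold over an arbitrary commutative ring. The two fundamental ones are the Cayley--Hamilton relation, which for $A\in SL_2$ reads $A+A^{-1}=\operatorname{trace}(A)\,I$, and the product relation $\operatorname{trace}(AB)+\operatorname{trace}(AB^{-1})=\operatorname{trace}(A)\operatorname{trace}(B)$. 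Since none of these identities involves division, they transfer verbatim to $SL_2(\Fp)$.

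First I would fix a finite generating set $\gamma_1,\dots,\gamma_k$ of $G$ and recall the classical fact that, for representations into $SL_2$, the character $\chi_\rho$ is determined by the values of $\tau_\gamma$ on $\gamma$ ranging over the $\gamma_i$, the products $\gamma_i\gamma_j$, and the triple products $\gamma_i\gamma_j\gamma_\ell$. Iterating the two trace identities above shows that every $\tau_\gamma$, for arbitrary $\gamma\in G$, is expressed as a polynomial with integer coefficients in this finite list of trace functions; this yields the coordinate functions $(\tau_{\gamma_1},\dots,\tau_{\gamma_N})$ and the map $t\colon X(G)\to \Fp^N$. Because the reduction is performed entirely with integer-coefficient identities, the very same polynomial expressions remain valid over $\Fp$.

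It then remains to show that the image of $t$ is Zariski-closed and cut out by integer polynomials. Over $\mathbb{C}$ this is the statement that the trace functions generate the ring of invariants of $R(G)$ under the conjugation action of $SL_2$ and that $X(G)$ is the corresponding affine GIT quotient; the relations among the $\tau_{\gamma_i}$ are universal, that is, again governed by integer-coefficient trace identities, so that the same equations should define the image $t(X(G)_\Fp)$. I would verify that the reductivity of $SL_2$ and the finite generation of the invariant ring, on which the closedness of the image relies, still hold over the algebraically closed field $\Fp$ when $p>2$.

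The main obstacle is isolating the one place where the characteristic must be excluded from being $2$. The reconstruction of a representation, up to conjugacy, from its trace coordinates, as well as the detection of irreducibility, rely on expressions such as $\operatorname{trace}(A^2)=\operatorname{trace}(A)^2-2$ and the commutator formula $\operatorname{trace}([A,B])=\operatorname{trace}(A)^2+\operatorname{trace}(B)^2+\operatorname{trace}(AB)^2-\operatorname{trace}(A)\operatorname{trace}(B)\operatorname{trace}(AB)-2$; the discriminant-type quantities appearing here degenerate in characteristic $2$, where, for instance, $A+A^{-1}=\operatorname{trace}(A)I$ forces the Frobenius collapse $\operatorname{trace}(A^2)=\operatorname{trace}(A)^2$. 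I expect the delicate step to be checking that, once $p\neq 2$, these normalizations and the separation of irreducible characters go through unchanged, so that $t$ is injective on characters and its image is exactly the affine set defined by the integer polynomials inherited from the complex case, thereby establishing the proposition.
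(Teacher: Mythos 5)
The paper itself offers no proof of this proposition: it is a quotation from \cite{AcunaMontesinos}, and the paper's entire justification is the sentence preceding the statement, namely that a careful analysis of the arguments of Gonz\'alez-Acu\~na and Montesinos shows they remain valid over any algebraically closed field of characteristic different from $2$. Your overall plan --- re-examine that construction, observe that it is built from the integral trace identities $A+A^{-1}=\operatorname{trace}(A)\,I$ and $\operatorname{trace}(AB)+\operatorname{trace}(AB^{-1})=\operatorname{trace}(A)\operatorname{trace}(B)$, and isolate the places where one must divide by $2$ --- is exactly that analysis, so in spirit you follow the same route as the paper.

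However, the middle step of your proposal has a genuine problem. You propose to obtain the closedness of the image of the trace map, and the fact that it is cut out by integer polynomials, from invariant theory: trace functions generate the invariant ring, $X(G)$ is the affine GIT quotient, and you ``would verify that the reductivity of $SL_2$ and the finite generation of the invariant ring \dots still hold over $\Fp$.'' This verification cannot go through as stated: in characteristic $p$ the group $SL_2$ is \emph{not} linearly reductive (it is only geometrically reductive, by Haboush's theorem), and for such groups the formation of invariants does not in general commute with base change. So even granting finite generation (Nagata) and generation of the invariants by traces (in positive characteristic this is a nontrivial theorem of Donkin, not a verbatim transfer), abstract GIT over $\Fp$ does not deliver the crucial conclusion that $X(G)_{\Fp}$ is defined by the \emph{same} polynomials over $\mathbb Z$ as $X(G)_{\mathbb C}$ --- and that conclusion is the whole content of the proposition, used later in Section~\ref{section:Fp}. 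What makes the statement true is precisely that \cite{AcunaMontesinos} avoid invariant theory altogether: they construct the defining equations of the image explicitly from the trace identities, and the only division occurring in their derivation is by $2$. Your first, second, and last paragraphs point in this right direction; the GIT paragraph should be discarded in favour of the explicit equational argument, since as written it is the step that would fail.
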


A representation $\rho\in R(G)$ is called \emph{irreducible} if no proper 
subspace of $\mathbb C^2$ is $\rho(G)$-invariant. The set of irreducible 
representations is Zariski open, and so is the set of irreducible characters
\cite{CullerShalen}. We denote them by $R_{\rm irr}(G)$ and $X_{\rm irr}(G)$ 
respectively. The following lemma is proved by Culler and Shalen and 
Gonz\'alez-Acu\~na and Montesinos in \cite{CullerShalen,AcunaMontesinos} for 
$\mathbb C$.

\begin{Lemma}[\cite{CullerShalen,AcunaMontesinos}]
\label{lemma:varietycharacters} 

The projection 
$$
\begin{array}{rcl}
 R(G) & \to & X(G) \\
 \rho & \mapsto & \chi_{\rho}
\end{array}
$$
is surjective.
Moreover  $R_{\rm irr}(G)\to X_{\rm irr}(G)$ is a local fibration with fibre 
the orbit by conjugacy.

\end{Lemma}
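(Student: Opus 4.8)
The plan is to treat the two assertions separately. For surjectivity, observe that $X(G)$ has been defined above as the set of characters $\chi_\rho$ of representations $\rho\in R(G)$, and Proposition~\ref{proposition:X(G)} equips this set with the structure of an affine algebraic set via the coordinate functions $\tau_{\gamma_1},\dots,\tau_{\gamma_N}$. Under this identification every point of $X(G)$ is, tautologically, the character of some representation, so the map $\rho\mapsto\chi_\rho$ is surjective by construction. The only substantive content is that the image of $R(G)$ under $\rho\mapsto(\tau_{\gamma_1}(\chi_\rho),\dots,\tau_{\gamma_N}(\chi_\rho))$ is genuinely Zariski closed, so that each point of the algebraic set $X(G)$ is realised by an honest representation rather than merely lying in the closure of the set of characters; this is precisely the theorem of Culler--Shalen and Gonz\'alez-Acu\~na--Montesinos recorded in Proposition~\ref{proposition:X(G)}, which I would simply quote.

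For the second assertion, the strategy is to exhibit $R_{\rm irr}(G)\to X_{\rm irr}(G)$ as the quotient map for a free action of $PSL_2(\mathbb C)$ and then to invoke a slice argument. First I would recall the classical fact that an irreducible representation is determined up to conjugacy by its character: if $\rho,\rho'\in R_{\rm irr}(G)$ satisfy $\chi_\rho=\chi_{\rho'}$, then $\rho'=A\rho A^{-1}$ for some $A\in SL_2(\mathbb C)$. This identifies the fibres of the projection with the conjugacy orbits, as claimed. Next I would check that the action is free modulo the centre: if $A\rho(\gamma)A^{-1}=\rho(\gamma)$ for every $\gamma\in G$, then $A$ commutes with the irreducible set $\rho(G)$ and is therefore scalar, hence trivial in $PSL_2(\mathbb C)$. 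Each orbit is thus a smoothly embedded copy of $PSL_2(\mathbb C)$ and coincides with the conjugacy orbit in the statement.

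To upgrade this to local triviality I would use that irreducible representations are the \emph{stable} points of the conjugation action, so that their orbits are Zariski closed and the action restricted to $R_{\rm irr}(G)$ is proper. A free proper action of a reductive group admits a geometric quotient that is a principal bundle; concretely, Luna's \'etale slice theorem produces at each $\rho\in R_{\rm irr}(G)$ a slice $S$ transverse to the orbit, together with an \'etale map $S\to X_{\rm irr}(G)$ and a local product decomposition $S\times PSL_2(\mathbb C)$ of $R_{\rm irr}(G)$ over its image. This is exactly the assertion that the projection is a local fibration with fibre the conjugacy orbit.

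The step I expect to be the main obstacle is this last one: passing from the set-theoretic description of the fibres to local triviality requires knowing that irreducible orbits are closed and that the action is proper, and then invoking the slice theorem, which is the technical heart of the matter. By contrast, the separation of irreducible characters and the freeness of the action reduce to routine trace computations. Since the lemma is classical, in the paper one would of course cite \cite{CullerShalen,AcunaMontesinos} rather than reproduce the argument.
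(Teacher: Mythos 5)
Your proposal is correct; note, though, that the paper itself contains no proof of this lemma --- it is quoted directly from \cite{CullerShalen,AcunaMontesinos}, which is also what you say you would do in print --- so the comparison must be with the proofs in those references, and there your route is genuinely different. Surjectivity you handle exactly as they do: it is tautological from the definition of $X(G)$ as the set of characters, the real content being that this set is a closed algebraic set (Proposition~\ref{proposition:X(G)}). For the local fibration, however, you invoke the GIT package: irreducible representations are stable points (closed orbits, stabiliser reduced to the centre by Schur's lemma), the conjugation action on the stable locus is proper, and Luna's \'etale slice theorem yields the local product structure. The arguments of Culler--Shalen and Gonz\'alez-Acu\~na--Montesinos are more elementary and explicit: given an irreducible $\rho$, one chooses $\gamma,\delta\in G$ on which $\rho$ restricts irreducibly (detected algebraically by $\chi_\rho([\gamma,\delta])\neq 2$) and conjugates $\rho(\gamma)$, $\rho(\delta)$ into a normal form whose entries depend algebraically on the character; this produces a local section of $R_{\rm irr}(G)\to X_{\rm irr}(G)$ near $\chi_\rho$, and freeness of the action (your Schur argument) plus the conjugacy statement for irreducible representations with equal characters then give local triviality with fibre the conjugacy orbit, with no appeal to closedness of orbits, properness, or slices. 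What your approach buys is generality: it works for any reductive group and exhibits $R_{\rm irr}(G)\to X_{\rm irr}(G)$ as an \'etale-locally trivial principal $PSL_2(\mathbb C)$-bundle. What the explicit-section approach buys is self-containedness and insensitivity to the ground field: it goes through over any algebraically closed field of characteristic $\neq 2$, which is how the paper obtains Proposition~\ref{proposition:X(G)Fp}, whereas Luna's theorem requires characteristic zero (linear reductivity), so your argument as written is confined to $\mathbb C$ --- harmless here, since the lemma is stated over $\mathbb C$, but worth flagging.
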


The following is well-known for $\mathbb C$, but the same proof applies to 
$\Fp$.

\begin{Lemma}
\label{lemma:Freerank2} Let $\mathbb K=\mathbb C$ or $\Fp$ for $p\neq 2$.
For a free group on two generators $F_2=\langle \gamma_1,\gamma_2\mid \rangle$,
$X(F_2)_{\mathbb K}\cong \mathbb K^3$ with coordinates 
$(\tau_{\gamma_1},\tau_{\gamma_2},\tau_{\gamma_1\gamma_2})$.
\end{Lemma}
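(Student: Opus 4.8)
The plan is to exhibit an explicit isomorphism between $X(F_2)_{\mathbb K}$ and $\mathbb K^3$ by showing that the three trace coordinates $(\tau_{\gamma_1},\tau_{\gamma_2},\tau_{\gamma_1\gamma_2})$ both separate characters and take all possible values in $\mathbb K^3$. Concretely, I would prove two things: first, that the map $\Phi\colon X(F_2)_{\mathbb K}\to\mathbb K^3$ sending $\chi\mapsto(\chi(\gamma_1),\chi(\gamma_2),\chi(\gamma_1\gamma_2))$ is injective, and second, that it is surjective. Together with Proposition~\ref{proposition:X(G)Fp}, which already realizes $X(F_2)_{\mathbb K}$ as an algebraic set embedded in $\mathbb K^N$ via trace coordinates, this identifies the character variety with all of $\mathbb K^3$.

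For surjectivity, I would write down, for an arbitrary triple $(x,y,z)\in\mathbb K^3$, an explicit pair of matrices in $SL_2(\mathbb K)$ realizing it. A standard normal form does the job: take
\[
\rho(\gamma_1)=\begin{pmatrix} s & 1 \\ 0 & s^{-1}\end{pmatrix},\qquad
\rho(\gamma_2)=\begin{pmatrix} t & 0 \\ u & t^{-1}\end{pmatrix},
\]
where $s+s^{-1}=x$ and $t+t^{-1}=y$ (such $s,t$ exist because $\mathbb K$ is algebraically closed), and then solve for the remaining entry $u$ so that $\operatorname{trace}(\rho(\gamma_1)\rho(\gamma_2))=z$; since that trace is an affine-linear function of $u$ with nonzero coefficient, $u$ is uniquely determined. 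Because $F_2$ is free, any assignment of the two generators to matrices extends to a representation, so this produces a genuine $\rho\in R(F_2)_{\mathbb K}$ with the prescribed character values, and by Lemma~\ref{lemma:varietycharacters} its character lies in $X(F_2)_{\mathbb K}$ and maps to $(x,y,z)$.

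For injectivity, I would invoke the classical fact, valid over any field of characteristic $\ne 2$, that two representations of $F_2$ into $SL_2(\mathbb K)$ with the same traces on $\gamma_1$, $\gamma_2$, and $\gamma_1\gamma_2$ have the same character on \emph{all} of $F_2$; equivalently, every trace $\chi(w)$ for $w$ a word in $\gamma_1,\gamma_2$ is a polynomial with integer coefficients in $x,y,z$. This follows from the trace identities $\operatorname{trace}(AB)+\operatorname{trace}(AB^{-1})=\operatorname{trace}(A)\operatorname{trace}(B)$ and the Cayley--Hamilton relation $\operatorname{trace}(AB)=\operatorname{trace}(A)\operatorname{trace}(B)-\operatorname{trace}(AB^{-1})$ in $SL_2$, which let one recursively reduce the trace of an arbitrary word to a $\mathbb Z$-polynomial in $x,y,z$. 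Since a character is by definition determined by its values on all group elements, equality of these three values forces equality of characters, giving injectivity of $\Phi$. The point I would emphasize is that these trace identities hold verbatim over $\Fp$ for $p\ne 2$ (the only place characteristic $2$ could intervene is irrelevant here), so the argument transfers directly from $\mathbb C$.

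The main obstacle is purely bookkeeping rather than conceptual: one must check that the reduction via trace identities genuinely terminates and expresses every word-trace as a polynomial in the three chosen coordinates, and that the normal form used for surjectivity is attainable for \emph{every} triple (including degenerate cases where, say, $x=\pm 2$ or a representation is reducible). The degenerate cases deserve a brief remark, but they cause no real trouble: the explicit matrices above are defined for all $(x,y,z)$, and injectivity is a statement about characters, not about conjugacy classes of representations, so reducible characters are handled uniformly. I expect the write-up to be short, citing the standard trace-identity machinery (already in the Culler--Shalen and Gonz\'alez-Acu\~na--Montesinos references) for the characteristic-$\ne 2$ case.
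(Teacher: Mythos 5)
Your argument is correct in substance, and it is precisely the classical Fricke--Vogt argument that the paper is implicitly invoking: the paper gives no proof at all, remarking only that the statement is well-known over $\mathbb C$ and that the same proof applies over $\Fp$. Both halves of your proof transfer verbatim to any algebraically closed field: the normal form for surjectivity only needs $s,t$ solving $\lambda^2-x\lambda+1=0$, and the trace identity $\operatorname{trace}(AB)+\operatorname{trace}(AB^{-1})=\operatorname{trace}(A)\operatorname{trace}(B)$ follows from Cayley--Hamilton for $SL_2$ over any commutative ring, so it holds in every characteristic (your caution about $p\neq 2$ is harmless but not actually needed at this step). One small misattribution: in the surjectivity part you do not need Lemma~\ref{lemma:varietycharacters}, since the character of a representation lies in $X(F_2)_{\mathbb K}$ by definition; that lemma gives the converse direction.

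There is, however, one point that deserves more care than you give it. The lemma asserts an isomorphism of algebraic sets $X(F_2)_{\mathbb K}\cong\mathbb K^3$, and over $\Fp$ a bijective morphism need not be an isomorphism: the Frobenius map $\mathbb K\to\mathbb K$, $x\mapsto x^p$, is a bijective morphism whose inverse is not regular. So injectivity plus surjectivity of your map $\Phi$ is, strictly speaking, not enough, and this is exactly the kind of pitfall that matters in a lemma whose whole point is the positive-characteristic case. The repair is immediate from what you have already proved: by Proposition~\ref{proposition:X(G)Fp}, the set $X(F_2)_{\mathbb K}$ sits inside $\mathbb K^N$ with coordinate functions $\tau_{\gamma_i}$ for finitely many words $\gamma_i$, and your trace-identity reduction expresses each $\tau_{\gamma_i}$ as a $\mathbb Z$-polynomial $P_i(\tau_{\gamma_1},\tau_{\gamma_2},\tau_{\gamma_1\gamma_2})$. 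Hence the map $\mathbb K^3\to\mathbb K^N$ sending $(x,y,z)$ to $\bigl(P_i(x,y,z)\bigr)_i$ is a regular two-sided inverse of $\Phi$, and $\Phi$ is an isomorphism of algebraic sets. With that one sentence added, your proof is complete.
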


For a compact manifold $M$, we use the notation $R(M)=R(\pi_1(M))$ and 
$X(M)=X(\pi_1(M))$. For a knot $K\subset S^3 $, we write, 
$R(K)=R(S^3\setminus \mathcal N(K))$ and $X(K)=X( S^3\setminus \mathcal N(K))$,
where $\mathcal N(K)$ denotes an open regular neighbourhood of $K$.

Recall that by \cite[Corollary~3.3]{BoileauZimmermannpi} the fundamental group 
of a knot is generated by two meridians if and only if it is a $2$-bridge knot 
(see Section~\ref{section:Montesinos}).

\begin{Corollary}
\label{cor:plane curve}
Assume that $K\subset S^3$ is a $2$-bridge knot, that is its fundamental group 
is generated by two meridians:
$\pi_1(S^3\setminus K)=\langle \mu_1,\mu_2\mid r_i\rangle$. Then, for 
$\mathbb K=\mathbb C$ or $\Fp$, $X( K)_{ \mathbb K}$ is a plane curve with 
coordinates $\tau_{\mu_1}$ and $\tau_{\mu_1\mu_2}$.
\end{Corollary}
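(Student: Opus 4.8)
The plan is to realise $X(K)_{\K}$ as a subvariety of $X(F_2)_{\K}\cong\K^3$ via the surjection of the free group onto $\pi_1(K)$, then to collapse one coordinate using the fact that the two generators are conjugate meridians, and finally to check that the resulting plane set is one-dimensional.

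First I would invoke functoriality. The presentation exhibits a surjection $q\colon F_2=\langle\mu_1,\mu_2\rangle\to\pi_1(S^3\setminus K)$, which induces by precomposition an injection $R(K)_\K\hookrightarrow R(F_2)_\K$ whose image is the closed set $\{\sigma:\sigma(r_i)=1\ \forall i\}$, and correspondingly a map $q^*\colon X(K)_\K\to X(F_2)_\K$. The point to be careful about is that $q^*$ is injective: since $\mu_1,\mu_2$ generate $\pi_1(K)$, the $SL_2$ trace identities express $\chi(\gamma)$ for every $\gamma$ as a polynomial in $\tau_{\mu_1},\tau_{\mu_2},\tau_{\mu_1\mu_2}$, so these three functions separate the points of $X(K)_\K$. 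By Lemma~\ref{lemma:Freerank2} they are precisely the coordinates identifying $X(F_2)_\K$ with $\K^3$, whence $X(K)_\K$ embeds as a closed subvariety of $\K^3$.

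Next I would pass to the plane. Because $\mu_1$ and $\mu_2$ are meridians of a knot they are conjugate in $\pi_1(K)$ — indeed, in the $2$-bridge presentation the single relator can be written $\Omega\mu_1\Omega^{-1}=\mu_2$ — so $\rho(\mu_2)$ is conjugate to $\rho(\mu_1)$ for every $\rho$, and hence $\tau_{\mu_1}=\tau_{\mu_2}$ identically on $X(K)_\K$. Thus $X(K)_\K$ is contained in the plane $\{\tau_{\mu_1}=\tau_{\mu_2}\}\subset\K^3$, which I identify with $\K^2$ with coordinates $\tau_{\mu_1}$ and $\tau_{\mu_1\mu_2}$; this is the asserted ``plane'' part, and the argument is insensitive to whether $\K=\mathbb C$ or $\K=\Fp$.

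It then remains to see that $X(K)_\K$ is genuinely a curve, i.e.\ that it has dimension one. For the lower bound I would exhibit the abelian characters: in an abelian representation the images of the two conjugate meridians coincide, say $\rho(\mu_1)=\rho(\mu_2)=A$, so $\tau_{\mu_1}=\operatorname{trace}(A)$ takes every value in $\K$ while $\tau_{\mu_1\mu_2}=\operatorname{trace}(A^2)=\tau_{\mu_1}^2-2$; this already produces a one-dimensional family and gives $\dim X(K)_\K\ge 1$. For the upper bound I would argue that $X(K)_\K$ is a \emph{proper} subvariety of the irreducible surface $\K^2$, whence $\dim X(K)_\K\le 1$. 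This last point is the main obstacle: it amounts to showing that the relator imposes a nontrivial polynomial condition on $(\tau_{\mu_1},\tau_{\mu_1\mu_2})$, equivalently that some character of $F_2$ with $\tau_{\mu_1}=\tau_{\mu_2}$ fails to extend over $\pi_1(K)$. Concretely I would normalise an irreducible representation by $\rho(\mu_1)=\left(\begin{smallmatrix}s&1\\0&s^{-1}\end{smallmatrix}\right)$ and $\rho(\mu_2)=\left(\begin{smallmatrix}s&0\\-u&s^{-1}\end{smallmatrix}\right)$ and evaluate the relator, obtaining Riley's polynomial $\phi(s,u)$, which is not identically zero; hence the relation is genuinely restrictive and $X(K)_\K\subsetneq\K^2$, completing the proof that $X(K)_\K$ is a plane curve.
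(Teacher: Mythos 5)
Your first two steps coincide with the paper's proof: the embedding $X(K)_\K\hookrightarrow X(F_2)_\K\cong\K^3$ via Lemma~\ref{lemma:Freerank2}, and the reduction to the plane $\{\tau_{\mu_1}=\tau_{\mu_2}\}$ because the two meridians are conjugate. The genuine gap is in the last step, where you interpret ``curve'' as $\dim X(K)_\K=1$ in the sense of maximal dimension of components. What the corollary must deliver --- and what the paper uses later, e.g.\ in Proposition~\ref{remark:2-bridge}, where $X(B(\frac{\beta}{\alpha}))$ is written as a union of plane curves $\mathcal C_0\cup\cdots\cup\mathcal C_r$, and in the counting arguments based on \cite{ORK}, which need every component containing irreducible characters to be exactly one-dimensional --- is that \emph{every} irreducible component of $X(K)_\K$ is a curve. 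Your lower bound exhibits only one one-dimensional component, the abelian parabola $\tau_{\mu_1\mu_2}=\tau_{\mu_1}^2-2$, and does nothing to exclude zero-dimensional components, i.e.\ isolated irreducible characters. This is precisely why the paper invokes Thurston's theorem \cite{ThurstonNotes} (see also \cite{KapovichBook}): every component containing an irreducible character has dimension at least one, while a component consisting entirely of reducible characters is contained in --- hence, by maximality, equal to --- the abelian parabola, because every reducible character of a knot group is the character of an abelian representation. Without this ingredient (or a substitute for it), your argument does not prove the statement as it is used.

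A second, smaller problem is the justification of your upper bound. You correctly single out properness of the inclusion $X(K)_\K\subsetneq\K^2$ as the main obstacle --- a point the paper actually leaves implicit --- but you merely assert that Riley's polynomial $\phi(s,u)$ is not identically zero. Over $\mathbb C$ this can be repaired: if $X(K)_{\mathbb C}$ were the whole plane, take $A$ and $B=gAg^{-1}$ generic, so that $\langle A,B\rangle$ is free of rank two and irreducible; the character of $\mu_1\mapsto A$, $\mu_2\mapsto B$ lies in the plane, so it would be the character of a representation factoring through $\pi_1(K)$, and since an irreducible character determines its representation up to conjugacy, the faithful representation itself would factor; then all relators $r_i$ would be trivial in $F_2$, giving $\pi_1(K)\cong F_2$, which contradicts $H_1(S^3\setminus K)\cong\mathbb Z$. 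But this repair is unavailable over $\Fp$: every finitely generated subgroup of $SL_2(\Fp)$ is finite, so $F_2$ has no faithful representation there. Since your statement explicitly includes $\K=\Fp$, you need an argument surviving reduction mod $p$, for instance the monicity of $\phi$ in $u$ (a fact due to Riley \cite{RileyII}, which guarantees $\phi\not\equiv 0$ over any field), possibly combined with Proposition~\ref{proposition:X(G)Fp}.
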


This uses Lemma~\ref{lemma:Freerank2} and the fact that  
$\tau_{\mu_1}=\tau_{\mu_2}$, because $\mu_1$ and $\mu_2$ are conjugate. 
Moreover, by a theorem of Thurston \cite{ThurstonNotes} (see also 
\cite{KapovichBook}), each irreducible component has to be at least a curve.

Sometimes it will be convenient to work with $PSL_2(\mathbb{C})$ instead of 
$SL_2(\mathbb{C})$. In this case we use the notation $R(M,PSL_2(\mathbb{C}))$
for the representation variety while its quotient in invariant theory by 
conjugacy will be denoted by $X(M,PSL_2(\mathbb{C}))$ 
(cf.~\cite{BoyerZhang,HeusenerPorti} for an interpretation in terms of 
characters).

\begin{Proposition}[\cite{Goldman}]
\label{proposition:dimsurface}
Let $\mathcal O^2$ be a compact two dimensional orbifold with $b$ cone points 
and $c$ corners. If $e$ denotes the Euler characteristic of the underlying 
surface $\vert \mathcal O^2\vert$, then 
$$
\dim X(\mathcal O^2, PSL_2(\mathbb{C}))= -3 e+2 b+c.
$$
\end{Proposition}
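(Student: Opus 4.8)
The plan is to read the dimension off a presentation of $\pi_1^{orb}(\mathcal O^2)$ and to count the free parameters of a \emph{good} representation $\rho$, i.e. an irreducible one at which $R(\mathcal O^2,PSL_2(\mathbb C))$ is smooth of the expected dimension. Writing $|\mathcal O^2|$ as an orientable surface of genus $g$ with $k$ mirror boundary circles, carrying interior cone points of orders $m_1,\dots,m_b$ and, on the $j$-th circle, corner reflectors of orders $n_{j,1},\dots,n_{j,s_j}$, Thurston's presentation has handle generators $a_1,b_1,\dots,a_g,b_g$; cone-point generators $x_i$ with $x_i^{m_i}=1$; and, for each circle, a loop $c_j$ together with reflections $r_{j,0},\dots,r_{j,s_j}$ subject to $r_{j,l}^2=1$, to the corner relations $(r_{j,l-1}r_{j,l})^{n_{j,l}}=1$, to $r_{j,s_j}=c_j r_{j,0}c_j^{-1}$, and to the single global relation $\prod_i[a_i,b_i]\prod_i x_i\prod_j c_j=1$. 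Here $e=\chi(|\mathcal O^2|)=2-2g-k$ and the total number of corners is $c=\sum_j s_j$.

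First I would determine, for each type of generator, the dimension of the subvariety of $PSL_2(\mathbb C)$ in which its image is constrained to lie. A handle generator or a loop $c_j$ is unconstrained and contributes $3=\dim PSL_2(\mathbb C)$. For a nontrivial solution of $x_i^{m_i}=1$ the image $\rho(x_i)$ is elliptic, hence lies in a fixed conjugacy class of dimension $\dim PSL_2(\mathbb C)-\dim(\text{centralizer})=3-1=2$, so each cone point contributes $2$. Each reflection $r_{j,l}$ maps to an involution, and the involutions form a single $2$-dimensional conjugacy class; the corner relation $(r_{j,l-1}r_{j,l})^{n_{j,l}}=1$ forces $\operatorname{trace}\bigl(\rho(r_{j,l-1}r_{j,l})\bigr)$ to equal a prescribed value $2\cos(\pi\kappa/n_{j,l})$, a single codimension-one condition. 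Accounting on the $j$-th circle for the $s_j$ independent reflections $r_{j,0},\dots,r_{j,s_j-1}$ (the last one being determined by $c_j$ and $r_{j,0}$), the loop $c_j$, and the $s_j$ corner relations, this part contributes $2s_j+3-s_j=s_j+3$.

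Next I would sum the contributions and impose the global constraints. The handles, cone points and mirror data give $6g+2b+\sum_j(s_j+3)=6g+2b+c+3k$ parameters, from which the single global relation removes $3$ at a good representation; thus $\dim R(\mathcal O^2,PSL_2(\mathbb C))=6g+2b+c+3k-3$. The conjugation action of $PSL_2(\mathbb C)$ is locally free on irreducible representations (the $PSL_2$ analogue of Lemma~\ref{lemma:varietycharacters}), so $\dim X=\dim R-3$, and therefore
\[
\dim X(\mathcal O^2,PSL_2(\mathbb C))=6g+2b+c+3k-6=-3(2-2g-k)+2b+c=-3e+2b+c .
\]
As a check, a closed genus-$g$ surface gives $6g-6$, while the triangle orbifold ($g=0$, $k=1$, $b=0$, $c=3$, so $e=1$) gives $0$, matching the rigidity of hyperbolic triangle groups.

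The point I have been using freely, and which is the real obstacle, is that the relevant $\rho$ is a \emph{smooth point of the expected dimension}, so that the parameter count is an equality and the cone- and corner-relations cut out transversally with the codimensions claimed. This is exactly where Goldman's deformation theory is needed: one identifies the Zariski tangent space $T_\rho X$ with $H^1(\pi_1^{orb}(\mathcal O^2);\mathfrak{sl}_2(\mathbb C)_{\mathrm{Ad}\,\rho})$ and verifies $H^0=H^2=0$ for irreducible $\rho$ (the vanishing of $H^2$ via orbifold Poincar\'e duality), which forces smoothness and transversality. Alternatively one may dispense with the presentation and compute $\dim X=\dim H^1=-\chi(\mathcal O^2;\mathfrak{sl}_2(\mathbb C)_{\mathrm{Ad}\,\rho})$ directly, evaluating the local contributions of the cone points and corner reflectors to the twisted Euler characteristic; in either approach the delicate step is the control of these local contributions and the accompanying transversality.
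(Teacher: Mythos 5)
You should first be aware that the paper contains no proof of this proposition: it is stated as a quotation from Goldman (the bracketed citation is the entire justification), with only the added remark that $\dim$ means the maximal dimension of an irreducible component. So the comparison here is between your reconstruction and the literature, not the paper's own text. That said, your reconstruction is essentially the standard argument, and your count is correct: handle generators and the loops $c_j$ contribute $3$ each, elliptic classes and involutions contribute $2$ each, each corner relation is a single trace condition, and the global relation and the conjugation action each remove $3$, giving $6g+2b+c+3k-6=-3e+2b+c$. This also reproduces the one instance the paper actually uses, namely a Coxeter polygon $P'$ with $n$ vertices ($g=0$, $k=1$, $b=0$, $c=n$) of dimension $n-3$ in the proof of Lemma~\ref{lemma:upper_bound}, and your two sanity checks (closed surfaces, rigid triangle groups) are right.

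The point to stress — and you do flag it, correctly, as the real obstacle — is that the naive count only yields a lower bound, since solution sets of equations drop in dimension by at most the number of imposed conditions. Because the proposition asserts the \emph{maximal} dimension over all irreducible components, the matching upper bound must hold at every representation, including the degenerate strata where some $x_i$ or $r_{j,l}$ is sent to the identity or to a smaller conjugacy class, and at reducible representations; this is exactly what the tangent-space bound $\dim_\rho R\le\dim Z^1(\pi;\mathfrak{sl}_2)=\dim H^1+3-\dim H^0$ together with the vanishing of $H^2$ delivers uniformly. So your final paragraph is not an optional refinement of the parameter count; it is the theorem. If you flesh it out, two caveats should be made explicit. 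First, an orbifold with mirror boundary is closed but non-orientable as an orbifold (reflections reverse orientation), so the duality argument for $H^2=0$ requires the orientation twist or a passage to the orientation double cover. Second, the formula presupposes that the orbifold carries irreducible representations of the generic type: for a disk with two corner reflectors the right-hand side equals $-1$, so some hypothesis such as negative orbifold Euler characteristic is implicitly in force — as it is in every application the paper makes of this proposition.
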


Here, $\dim X(\mathcal O^2, PSL_2(\mathbb{C}))$ means the maximal dimension of 
the irreducible components of $X(\mathcal O^2, PSL_2(\mathbb{C}))$.

\section{Montesinos knots of Kinoshita-Terasaka type}
\label{section:Montesinos}

The exposition in this section follows roughly the presentation in Zieschang's 
paper \cite{Zieschang}.

Recall that a \emph{rational tangle} is any two-string tangle that can be
obtained from the trivial tangle (i.e. two unknotting vertical arcs running 
parallel from the bottom to the top of a ball seen as a cube) by an isotopy of 
the ball which does not leave its boundary pointwise fixed. The general form of 
a rational tangle is shown in Figure~\ref{fig:rattangle} where the labels 
$a'_i$, $a''_i$ and $a_k$ denote the number of positive crossings, with the 
convention that a negative crossing counts for $-1$ positive crossings. It can 
be shown that the continued fraction 
$\frac{\beta}{\alpha}=\frac{1}{a_1+\frac{1}{-a_2+\dots}}$, where 
$a_i=a'_i+a''_i$ for $i=1,\dots,k-1$ is an invariant of the isotopy class 
of the rational tangle, where isotopies in this case are required to leave the 
boundary pointwise fixed.

\begin{figure}[h]
\begin{center}
 {
 \psfrag{a1'}{$a_1'$}
 \psfrag{b1}{${a_1''}$}
 \psfrag{a2'}{$a_2'$}
 \psfrag{b2}{${a_2''}$}
 \psfrag{a3'}{$a_3'$}
 \psfrag{b3}{${a_3''}$}
 \psfrag{ak}{$a_k$}
  \includegraphics[height=5cm]{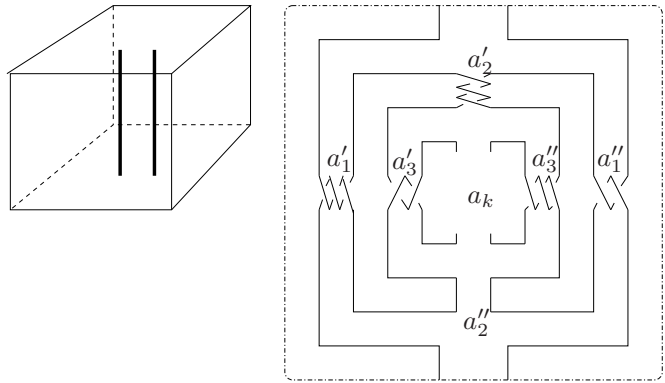}
 }
\end{center}
\caption{A trivial tangle, and a rational tangle in its standard form.}
\label{fig:rattangle}
\end{figure}

Rational tangles are closely related to \emph{$2$-bridge knots} and
\emph{links}. These are links obtained by gluing together two trivial tangles
along their boundaries, or equivalently by closing up a rational tangle by 
adding two arcs, one connecting the bottom ends of the tangle and one 
connecting its top ends (See Figure~\ref{fig:bridge}). We shall denote by 
$B(\frac{\beta}{\alpha})$ the $2$-bridge link obtained by closing the rational 
tangle with invariant $\frac{\beta}{\alpha}$.

Montesinos links can be interpreted as a generalization of $2$-bridge links in
which several tangles are stacked together one after the other in a circular
pattern as shown in Figure~\ref{fig:montesinos}, the $2$-bridge link case 
corresponding to the situation where a unique tangle is used (see 
Figure~\ref{fig:bridge}). Note, though, that the tangle must be rotated of 
$\pi/2$ for the two constructions to be consistent; in particular the two 
continued fractions for the $2$-bridge and the Montesinos presentations give 
rational numbers which are opposite of inverses of one another. It was proved 
by Bonahon (cf.~\cite{BonahonSiebenmann}) that the a Montesinos link with 
$n\ge 3$ tangles is completely determined by the ordered set of the $n$ 
rational numbers $\frac{\beta_i}{\alpha_i}\in(0,1)$ associated to its $n$ 
tangles up to cyclic permutations and reverse of order, together with the 
number $e_0=e-\sum_{i=1}^n\frac{\beta_i}{\alpha_i}$, where $e$ is the number of
crossings that appear outside the $n$ tangles (see 
Figure~\ref{fig:montesinos}). Note that these extra crossings can be englobed 
in the rational tangles if we do not require their associated continued 
fractions to belong to $(0,1)$.

\begin{figure}[h]
\begin{center}
 {
 \psfrag{b}{${\beta}/{\alpha}$}
  \includegraphics[height=3cm]{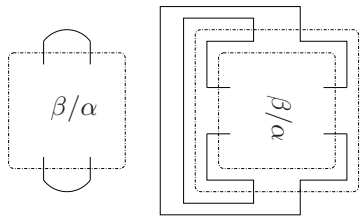}
 }
\end{center}
\caption{A $2$-bridge knot and its Montesinos form; here $\frac{\beta}{\alpha}$
denotes the rational value of the continued fraction associated to the rational 
tangle.}
\label{fig:bridge}
\end{figure}

\begin{figure}[h]
\begin{center}
 {
 \psfrag{b1}{$\beta_1/\alpha_1$}
 \psfrag{b2}{$\beta_2/\alpha_2$}
 \psfrag{bn}{$\beta_n/\alpha_n$}
 \psfrag{e}{$e$}
  \includegraphics[height=5cm]{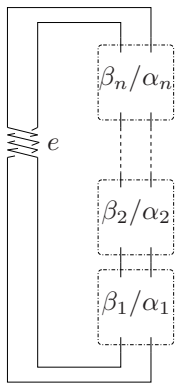}
 }
\end{center}
\caption{A Montesinos link with $n$ rational tangles.}
\label{fig:montesinos}
\end{figure}

It is not hard to see that a Montesinos link is a knot if and only if either
there is a unique even $\alpha_i$ or there is no even $\alpha_i$ and in this
case the $\beta_i$s and $e$ must satisfy some extra condition (see 
Boileau-Zimmermann \cite[Fig. 4, page 570]{BoileauZimmermann}). Note that when 
$\alpha_i$ is even, each arc of the $i$-th tangle enters and exits the ball on 
the same side.

\begin{Definition}
\label{definition:KTtype}
A Montesinos knot with $n+1$ tangles will be called \emph{of Kinoshita-Terasaka
type}\footnote[1]{The expression is already used by Riley in his paper: these 
knots can in fact be seen as a Kinoshita-Terasaka sum of a $2$-component link, 
see \cite{RileyI}} if $\alpha_{n+1}$ is even.
\end{Definition} 

According to the previous discussion, the $\alpha_i$s are all odd for $1\le 
i\le n$. We can furthermore assume that $e=0$, up to allowing
$\frac{\beta_{n+1}}{\alpha_{n+1}}$ to be an arbitrary rational. For $n\ge 3$ we
shall denote by $M(\frac{\beta_1}{\alpha_1},\dots,\frac{\beta_n}{\alpha_n},
\frac{\beta_{n+1}}{\alpha_{n+1}})$ the Montesinos knot of Kinoshita-Terasaka
type obtained by stacking together $n+1$ rational tangles of invariants
$\frac{\beta_i}{\alpha_i}$, satisfying the aforementioned requirements.

Now let $K=M(\frac{\beta_1}{\alpha_1},\dots,\frac{\beta_n}{\alpha_n},
\frac{\beta_{n+1}}{\alpha_{n+1}})$ and let $K'$ be the composite knot whose
prime summands are the $n$ $2$-bridge knots $B(\frac{\beta_i}{\alpha_i})$,
$i=1,\dots,n$, i.e. $K'=B(\frac{\beta_1}{\alpha_1})\sharp\dots\sharp
B(\frac{\beta_n}{\alpha_n})$. Note that $K'$ can be obtained from $K$ by
changing some crossings in the $n+1$st tangle. 

For each of the two knots, consider the meridians $\mu_i$ and $\mu'_i$, $1\le i
\le n+1$, as shown in Figure~\ref{fig:composite}: they generate the fundamental 
groups of (the exteriors of) $K$ and $K'$, and are in fact a redundant system 
of generators. This follows from the fact that the fundamental group of the 
(exterior) of a rational tangle is a free group of rank $2$; in particular, the 
fundamental group of the $i$th tangle is generated by two meridians among 
$\mu_i$, $\mu'_i$, $\mu_{i+1}$ and $\mu'_{i+1}$. Using Wirtinger's method, one 
can deduce the following presentations for the fundamental groups of $K$ and 
$K'$:
$$\pi_1(K)=\langle \mu_1,\mu_1',\dots,\mu_{n+1},\mu_{n+1} \mid {\mathcal R},
w_1\mu_1=\mu'_1w_1, w_{n+1}\mu_{n+1}=\mu'_{n+1}w_{n+1}\rangle,$$
and
$$\pi_1(K')=\langle \mu_1,\mu_1',\dots,\mu_{n+1},\mu_{n+1} \mid {\mathcal R},
\mu_1=\mu'_1, \mu_{n+1}=\mu'_{n+1}\rangle,$$
where ${\mathcal R}$ is a set of $2n$ relations expressing, for each
$i=1,\dots,n$, two meridians among $\mu_i$, $\mu'_i$, $\mu_{i+1}$ and
$\mu'_{i+1}$ as conjugates of the other two, and are obtained from the 
Wirtinger relations inside the $i$th tangle. Similarly $w_1$ and $w_{n+1}$ are 
products of the elements $\mu_1$, $\mu'_1$, $\mu_{n+1}$ and $\mu'_{n+1}$, and
the last two relations are obtained from the Wirtinger relations in the $n+1$st 
tangle. Note that, for each $i$, the meridians $\mu_i$ and $\mu'_i$ cobound an 
annulus in the exterior of $K'$, so $\mu_i=\mu'_i$ in $\pi_1(K')$ for all $i$. 
Note, moreover, that the fundamental group of the composite knot $K'$ can also 
be described as sum of the fundamental groups of its summands amalgamated over 
cyclic subgroups:
$$
\pi_1(K')\cong\pi_1(B(\beta_1/\alpha_1))*_{\mathbb Z}\dots*_{\mathbb
Z}\pi_1(B(\beta_n/\alpha_n)),
$$ 
where the amalgamating subgroups ${\mathbb Z}$ are generated by the meridians
$\mu_i=\mu'_i$, $i=2,\dots,n$.
 
\begin{figure}[h]
\begin{center}
 {
 \psfrag{b1}{$\beta_1/\alpha_1$}
 \psfrag{b2}{$\beta_2/\alpha_2$}
 \psfrag{bn}{$\beta_n/\alpha_n$}
 \psfrag{bn1}{$\beta_{n+1}/\alpha_{n+1}$}
 \psfrag{n1}{$\mu_{1}'$}
 \psfrag{n2}{$\mu_{2}'$}
 \psfrag{n3}{$\mu_{3}'$}
 \psfrag{nn}{$\mu_{n}'$}
 \psfrag{nm}{$\mu_{n+1}'$}
 \psfrag{m1}{$\mu_{1}$}
 \psfrag{m2}{$\mu_{2}$}
 \psfrag{m3}{$\mu_{3}$}
 \psfrag{mn}{$\mu_{n}$}
 \psfrag{mm}{$\mu_{n+1}$}
 \psfrag{K}{$K$}
 \psfrag{Kk}{$K'$}
  \includegraphics[height=10cm]{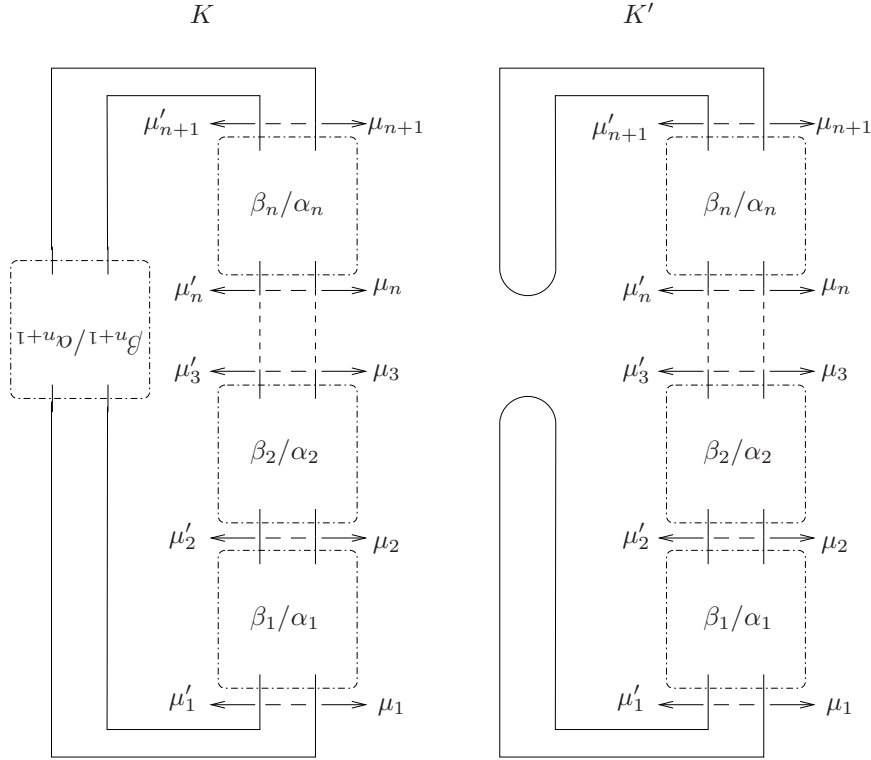}
 }
\end{center}
\caption{A Montesinos knot of Kinoshita-Terasaka type on the left and its
associated composite knot on the right.}
\label{fig:composite}
\end{figure}

The following result is straightforward, in view of the above presentations:

\begin{Proposition}
\label{proposition:quotient}
The groups obtained by quotienting $\pi_1(K)$ and $\pi_1(K')$ by the normal 
subgroups $H$ and $H'$ respectively, both normally generated by the six 
commutators $[\mu_1,\mu'_1]$, $[\mu_1,\mu_{n+1}]$, $[\mu_1,\mu'_{n+1}]$, 
$[\mu'_1,\mu_{n+1}]$, $[\mu'_1,\mu'_{n+1}]$, and $[\mu_{n+1},\mu'_{n+1}]$, are 
isomorphic. In particular, the representation variety of the group 
$\Gamma=\pi_1(K)/H \cong \pi_1(K')/H'$ is a subvariety of both representation 
varieties for $\pi_1(K)$ and $\pi_1(K')$. The analogue conclusion holds for the 
character varieties.
\end{Proposition}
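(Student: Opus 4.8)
The plan is to write the two presentations side by side and observe that they share the full generating set together with the relator set $\mathcal{R}$, and differ \emph{only} in the two pairs of relations coming from the $(n+1)$st tangle: for $\pi_1(K)$ these are the conjugation relations $w_1\mu_1=\mu'_1w_1$ and $w_{n+1}\mu_{n+1}=\mu'_{n+1}w_{n+1}$, whereas for $\pi_1(K')$ they are the equalities $\mu_1=\mu'_1$ and $\mu_{n+1}=\mu'_{n+1}$. Since $H$ and $H'$ are normally generated by the \emph{same} six commutators, in both quotients the four boundary meridians $\mu_1,\mu'_1,\mu_{n+1},\mu'_{n+1}$ commute pairwise. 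I would therefore present both quotient groups by the common generators, the relators $\mathcal{R}$, and these six commutators, and then show that once the commutators are adjoined the two differing pairs of tangle relations become \emph{equivalent}.

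The key computation is precisely the commuting trick. Recall that $w_1$ and $w_{n+1}$ are words in $\mu_1,\mu'_1,\mu_{n+1},\mu'_{n+1}$ alone. Once these four meridians commute, $w_1$ commutes with both $\mu_1$ and $\mu'_1$, so the relation $w_1\mu_1=\mu'_1w_1$ rewrites as $\mu_1w_1=\mu'_1w_1$, which after cancelling $w_1$ on the right is exactly $\mu_1=\mu'_1$; conversely, if $\mu_1=\mu'_1$ then $w_1\mu_1=\mu'_1w_1$ reduces to $w_1\mu_1=\mu_1w_1$, which holds by commutativity. The same argument applied to $w_{n+1}$ shows that $w_{n+1}\mu_{n+1}=\mu'_{n+1}w_{n+1}$ is, modulo the commutators, equivalent to $\mu_{n+1}=\mu'_{n+1}$. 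Hence $\pi_1(K)/H$ and $\pi_1(K')/H'$ admit one and the same presentation --- the common generators subject to $\mathcal{R}$, the six commutators, and the two equalities $\mu_1=\mu'_1$ and $\mu_{n+1}=\mu'_{n+1}$ --- which yields the desired isomorphism $\Gamma=\pi_1(K)/H\cong\pi_1(K')/H'$, realised by the identity on generators.

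For the ``in particular'' clause I would invoke the general principle that if $\Gamma=G/N$ with $N$ the normal closure of finitely many elements $g_1,\dots,g_k$, then a representation $\rho\in R(G)$ factors through $\Gamma$ precisely when $\rho(g_j)=\mathrm{Id}$ for all $j$; since each such condition is the vanishing of the entries of $\rho(g_j)-\mathrm{Id}$, it is Zariski-closed, so $R(\Gamma)$ is naturally a closed subvariety of $R(G)$. Applying this twice, with $(G,N)=(\pi_1(K),H)$ and $(G,N)=(\pi_1(K'),H')$ and the six commutators as normal generators, realises $R(\Gamma)$ as a subvariety of both $R(\pi_1(K))$ and $R(\pi_1(K'))$. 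The statement for characters follows because the projection $R\to X$ of Lemma~\ref{lemma:varietycharacters} intertwines these inclusions: the characters factoring through $\Gamma$ form a closed subset of $X(\pi_1(K))$ (respectively $X(\pi_1(K'))$), so $X(\Gamma)$ embeds in both character varieties as a subvariety.

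The main obstacle is really just the commuting-trick computation of the second paragraph; the one point demanding care is the structural input, already recorded in the text, that $w_1$ and $w_{n+1}$ are words in the four boundary meridians \emph{only} --- it is exactly this locality that lets the conjugation relations collapse to plain equalities once the commutators are imposed, and without it the argument would not close.
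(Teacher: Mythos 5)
Your proposal is correct and is exactly the argument the paper intends: the paper declares the result ``straightforward, in view of the above presentations,'' and your commuting-trick computation (that modulo the six commutators the conjugation relations $w_1\mu_1=\mu'_1w_1$ and $w_{n+1}\mu_{n+1}=\mu'_{n+1}w_{n+1}$ are equivalent to the equalities $\mu_1=\mu'_1$ and $\mu_{n+1}=\mu'_{n+1}$, precisely because $w_1$ and $w_{n+1}$ are words in the four boundary meridians only) is that straightforward verification, with the subvariety statements handled by the standard closed-condition argument. No gaps; your write-up simply makes explicit what the paper leaves to the reader.
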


\begin{Remark}
\label{remark:small components}
Let $\hat K$ be a Montesinos knot obtained from $K$ by deleting some of its
rational tangles $\frac{\beta_i}{\alpha_i}$ corresponding to indices $i\le n$,
and let $\hat K'$ be its associated composite knot. It is easy to see that 
every representation of $\hat K'$ extends to a representation of $K'$ whose
restriction to the ``missing" summands is the obvious abelian one. As a
consequence, if $\hat\Gamma$ denotes the common quotient of $\pi_1(\hat K)$ and
$\pi_1(\hat K')$, as defined in this section, we have that 
$X(\hat\Gamma)\subset X(\Gamma)$.
\end{Remark}

\section{Bending}
\label{section:bending}

Recall that $\pi_1(K')$ is the amalgamated product of 
$\pi_1(B(\frac{\beta_1}{\alpha_1}))$, $\ldots$, 
$\pi_1(B(\frac{\beta_n}{\alpha_n}))$ along the cyclic groups generated by 
$\mu_2,\ldots \mu_n$.

Let $\rho\in R( K')$ be a non-trivial representation. Denote by 
$A_i< PSL_2(\mathbb C)$ the projection of the centraliser of $\rho(\mu_i)$ (and
$\rho(\mu_i')$) in $SL_2(\mathbb C)$, for $i=2,\ldots,n$. By hypothesis
$\rho(\mu_i)$ is non-trivial. For the centraliser $A_i$ we have:
\begin{itemize}
\item If $\rho(\mu_i)$ is parabolic, then $A_i\cong \mathbb C$ is a parabolic 
group, that stabilises the same point of $\partial_{\infty}\mathbf H^3 \cong 
\mathbb P^1(\mathbb C)\cong\hat{\mathbb C}$ as $\rho(\mu_i)$.
\item If $\rho(\mu_i)$ is hyperbolic or elliptic (i.e. 
$trace(\rho(\mu_i))\neq \pm 2$), then $A_i\cong \mathbb C^*$ is the group that 
preserves the same oriented geodesic as $\rho(\mu_i)$.
\end{itemize}

If $a=(a_2,\dots,a_{n})\in A_2\times\dots\times A_{n}$ where 
$A_i\subset PSL_2({\mathbb C})$ is the projection of the centraliser in 
$SL_2({\mathbb C})$ of $\rho_i(\mu_{i+1})=\rho_{i+1}(\mu_{i+1})$, then $a\rho$ 
is the representation defined as 
$$a\rho_=\rho_1*\, ^{a_2}\rho_2*\, ^{a_2a_3}\rho_3*\dots *\,
^{a_2a_3\dots a_{n}}\rho_n,$$ 
where $^x\rho$ is the representation obtained by conjugating $\rho$ by 
$x\in PSL_2(\mathbb C)$.

By \cite[Lemma 5.6]{JohnsonMillson} we have:

\begin{Lemma}[Johnson and Millson \cite{JohnsonMillson}]
\label{lemma:bending}
If the restriction of $\rho \in R( K')$ to each 
$\pi_1(B(\frac{\beta_i}{\alpha_i}))$ is irreducible, then for 
$a, b\in A_2\times\dots\times A_{n}$ in a neighborhood of the identity 
$a\rho$ is conjugate to $b\rho$ if and only if $a=b$.
\end{Lemma}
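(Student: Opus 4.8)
The plan is to establish the statement in the stronger, neighbourhood-free form: for \emph{any} $a,b\in A_2\times\cdots\times A_n$ the representations $a\rho$ and $b\rho$ are conjugate if and only if $a=b$. The irreducibility of the restrictions, combined with the fact that bending leaves the first factor $\rho_1$ untouched, makes the local hypothesis superfluous. Throughout I write $G_i=\pi_1(B(\frac{\beta_i}{\alpha_i}))$, so that $\rho_i=\rho|_{G_i}$ and $\rho=\rho_1*\cdots*\rho_n$. The ``if'' direction is trivial, so suppose $g\in PSL_2(\mathbb{C})$ conjugates $a\rho$ to $b\rho$ (conjugation of an $SL_2(\mathbb{C})$-representation by a $PSL_2(\mathbb{C})$-element being well defined, as the centre acts trivially).

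The decisive step is to pin down $g$ using the unbent factor. Since the bending formula conjugates $\rho_1$ by the empty product, both $a\rho$ and $b\rho$ restrict to $\rho_1$ on $G_1$; hence $g$ centralises $\rho_1(G_1)$. As $\rho_1$ is irreducible by hypothesis, Schur's lemma gives that its centraliser in $SL_2(\mathbb{C})$ is $\{\pm I\}$, which is trivial in $PSL_2(\mathbb{C})$. Therefore $g=\mathrm{id}$ and $a\rho=b\rho$ as representations of $\pi_1(K')$.

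It then remains to deduce $a_i=b_i$ for each $i$, which I would do by induction on $i=2,\dots,n$. Evaluating $a\rho=b\rho$ on $G_i$ yields
$$(a_2\cdots a_i)\,\rho_i(\gamma)\,(a_2\cdots a_i)^{-1}=(b_2\cdots b_i)\,\rho_i(\gamma)\,(b_2\cdots b_i)^{-1}\qquad\text{for all }\gamma\in G_i,$$
so that $(b_2\cdots b_i)^{-1}(a_2\cdots a_i)$ centralises the irreducible $\rho_i(G_i)$ and is thus trivial in $PSL_2(\mathbb{C})$. Assuming inductively that $a_j=b_j$ for $j<i$, this cumulative element telescopes to $b_i^{-1}a_i$, whence $a_i=b_i$. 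Running the induction from $i=2$ up to $n$ gives $a=b$.

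The crux is really the first step — using $\rho_1$ to force $g=\mathrm{id}$ — after which the statement is a formal consequence of Schur's lemma applied factor by factor, so I do not anticipate a genuine obstacle. The only points demanding (routine) care are the bookkeeping in the telescoping of the nested products $a_2\cdots a_i$ and the verification, already built into the construction, that $a\rho$ is a bona fide representation of the amalgam: since $\mu_i$ is the element amalgamating $G_{i-1}$ and $G_i$, its two prescribed images agree precisely because $a_i$ commutes with $\rho(\mu_i)$, i.e. because $a_i\in A_i$. The entire argument is driven by the single structural input that every $\rho_i$ is irreducible, forcing all relevant centralisers to collapse to the identity in $PSL_2(\mathbb{C})$.
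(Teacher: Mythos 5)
Your proof is correct, and it is a genuinely different route from the paper's, which offers no argument at all: the lemma is quoted verbatim, neighbourhood hypothesis included, from Lemma~5.6 of Johnson--Millson. Your argument is direct and self-contained: since bending leaves $\rho_1$ untouched, any $g\in PSL_2(\mathbb{C})$ conjugating $a\rho$ to $b\rho$ must centralise the irreducible $\rho_1(G_1)$, hence is trivial by Schur's lemma; then the pointwise equality $a\rho=b\rho$, together with triviality of the centralisers of the irreducible $\rho_i(G_i)$ in $PSL_2(\mathbb{C})$, gives $a_2\cdots a_i=b_2\cdots b_i$ for every $i$, which telescopes to $a=b$. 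The two delicate points you flag are indeed the right ones and are handled correctly ($PSL_2$-conjugation of $SL_2$-representations is well defined, and $a\rho$ is a genuine representation of the amalgam because $a_i$ centralises $\rho(\mu_i)$). What your approach buys is a strictly stronger conclusion: global injectivity of the bending parameters, with no restriction to a neighbourhood of the identity. That restriction is not superfluous in the generality treated by Johnson and Millson, where one bends inside a larger isometry group (e.g.\ representations landing in $SO(n,1)\subset SO(n+1,1)$) and the unbent factor can have nontrivial centraliser there; what makes your global statement work here is precisely the hypothesis that every restriction is irreducible in $SL_2(\mathbb{C})$, which collapses all the relevant centralisers to $\{\pm I\}$. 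What the paper's citation buys, by contrast, is brevity and the connection to the general bending machinery; but for the application at hand your sharper statement would serve equally well.
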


Consider the  map whose components are the restriction of characters to each 
$\pi_1(B(\frac{\beta_i}{\alpha_i}))$:
$$
\pi:X(K')\to  X(B(\tfrac{\beta_1}{\alpha_1}))\times\cdots\times 
X(B(\tfrac{\beta_n}{\alpha_n})).
$$

\begin{Corollary}
\label{corollary:bending}
Let $(\chi_1,\ldots,\chi_n)\in X(B(\tfrac{\beta_1}{\alpha_1}))
\times\cdots\times X(B(\tfrac{\beta_n}{\alpha_n})) $. If non-empty, the fibre 
$\pi^{-1}(\chi_1,\ldots,\chi_n)$ has dimension $\geq s-1$, where $s$ is the 
number of irreducible characters among $\chi_1,\ldots,\chi_n$.
\end{Corollary}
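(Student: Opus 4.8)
The plan is to produce the lower bound by placing a \emph{bending orbit} of dimension $s-1$ inside the fibre. First I would pick a representation $\rho\in R(K')$ with $\chi_\rho\in\pi^{-1}(\chi_1,\dots,\chi_n)$ (possible since the fibre is assumed non-empty) and write $\rho_i$ for the restriction of $\rho$ to $\pi_1(B(\tfrac{\beta_i}{\alpha_i}))$. The crucial first observation is that bending never leaves the fibre: for any $a=(a_2,\dots,a_n)\in A_2\times\cdots\times A_n$, the restriction of $a\rho$ to the $i$-th factor is ${}^{a_2\cdots a_i}\rho_i$, a conjugate of $\rho_i$, so $\chi_{a\rho}$ again restricts to $\chi_i$ on each factor and therefore $\chi_{a\rho}\in\pi^{-1}(\chi_1,\dots,\chi_n)$. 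Thus the entire bending orbit of $\chi_\rho$ lies in the fibre, and it remains only to bound its dimension below by $s-1$.

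The number $s$ enters through a coarsening of the amalgamated product that turns every factor into an irreducible one. Let $i_1<\cdots<i_s$ be the indices for which $\chi_i$ is irreducible; equivalently $\rho_i$ is irreducible, since a reducible representation has the same (reducible) character as its diagonal part. I would group the summands into $s$ consecutive blocks $G_1,\dots,G_s$, choosing the cuts so that each block contains exactly one of the indices $i_1,\dots,i_s$; this is clearly possible as the $i_k$ are strictly increasing. Since $G_k$ contains the irreducibly-represented subgroup $\pi_1(B(\tfrac{\beta_{i_k}}{\alpha_{i_k}}))$, the restriction $\rho|_{G_k}$ admits no invariant line and is hence irreducible. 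This presents $\pi_1(K')$ as an iterated amalgam $G_1*_{\mathbb Z}\cdots*_{\mathbb Z}G_s$ of $s$ groups whose restrictions are all irreducible, glued along the $s-1$ meridians separating consecutive blocks; each such meridian maps to an element $\neq\pm I$ (it lies in an irreducible block), so its centraliser projects to a $1$-dimensional subgroup of $PSL_2(\mathbb C)$.

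Finally I would apply Johnson--Millson (Lemma~\ref{lemma:bending}) to this coarser decomposition. Because every block restricts irreducibly, bending along the $s-1$ separating meridians is locally injective on conjugacy classes near the identity; and since conjugacy-inequivalent irreducible representations have distinct characters, the resulting morphism from the $(s-1)$-dimensional parameter space of block-bendings into $X(K')$ is locally injective, whence its image has dimension $s-1$. By the first paragraph this image is contained in the fibre, so $\dim\pi^{-1}(\chi_1,\dots,\chi_n)\ge s-1$ (for $s\le 1$ the bound is automatic from non-emptiness). The step I expect to require the most care is the coarsening: one must check that each block genuinely restricts to an irreducible representation, and that Lemma~\ref{lemma:bending}, although phrased for the decomposition into the $2$-bridge factors, applies verbatim to \emph{any} amalgamated-product decomposition with irreducible factors. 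This is exactly what prevents the naive count $n-1$ and replaces it by $s-1$, the reducible factors contributing redundant bending directions that the grouping discards; everything else is routine bookkeeping with centraliser dimensions and the passage from representations to characters.
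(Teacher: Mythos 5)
Your proposal is correct, and for $s=n$ it coincides with the paper's argument (Lemma~\ref{lemma:bending} combined with Lemma~\ref{lemma:varietycharacters}); where you genuinely diverge is in the treatment of the reducible factors. The paper handles them by the argument of Remark~\ref{remark:small components}: each reducible $\chi_i$ is the character of an \emph{abelian} representation, which sends $\mu_i$ and $\mu_{i+1}$ to the same matrix, so the $(s-1)$-parameter bending family for the smaller composite knot $\hat K'$ formed by the $s$ irreducible summands extends across the reducible tangles to representations of $K'$ lying in the given fibre; the advantage is that $\hat K'$ is again a connected sum of $2$-bridge knots, so Lemma~\ref{lemma:bending} applies literally as stated. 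Your coarsening into consecutive blocks $G_1*_{\mathbb Z}\cdots *_{\mathbb Z}G_s$ avoids choosing abelian representatives altogether, since you bend an arbitrary representative $\rho$ of a point of the fibre; the price, as you yourself note, is that you need the Johnson--Millson lemma for an arbitrary amalgamated-product decomposition with irreducible vertex restrictions rather than the literal Lemma~\ref{lemma:bending}. This is legitimate, because \cite{JohnsonMillson} prove their result in that generality, and in the end the two constructions produce essentially the same $(s-1)$-parameter family of characters. One justification in your write-up does need repair: that a separating meridian has image $\neq\pm I$ does not follow merely from its lying in a block whose restriction is irreducible, because that meridian may belong to a reducible $2$-bridge factor of the block, far from the irreducible one. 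The correct argument is that all meridians of $K'$ are conjugate in $\pi_1(K')$, so if one had central image then all would; since each $2$-bridge factor is generated by meridians, every restriction $\rho_i$ would then be central, contradicting $s\geq 1$. With that fix the centralisers are indeed $1$-dimensional and your dimension count goes through.
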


\begin{proof}
Assume first that all the $\chi_i$ are irreducible, i.e. $s=n$. Then the 
dimension of the fibre is $\geq n-1$ by Lemmas~\ref{lemma:bending} and  
\ref{lemma:varietycharacters}. For an arbitrary $s\geq 1$, we just use the 
argument of Remark~\ref{remark:small components}.
\end{proof}

Note that if all $\chi_i$ are irreducible, then the inequality in 
Corollary~\ref{corollary:bending} is an equality by 
Lemma~\ref{lemma:varietycharacters}. We now want to give also an upper bound on 
the dimension of the fibre in the general case. For that we need to understand 
the reducible characters. We begin with the case where all $\chi_i$ are 
parabolic, and start with a remark:

\begin{Remark}
\label{remark:abelian_par}
A parabolic representation of $\pi_1(B(\frac{\beta}{\alpha} ))$ that is 
reducible is also abelian and, up to conjugacy, it maps 
$\gamma\in \pi_1(B(\frac{\beta}{\alpha} ))$ to
$$
 \pm \begin{pmatrix}
     1 & h(\gamma) \\
      0 & 1
    \end{pmatrix}
$$
for some homomorphism $h: \pi_1(B(\frac{\beta}{\alpha} ))\to\mathbb C$. 
Its character is the trivial one, though the representation may be non-trivial.
\end{Remark}

Since $\pi_1(K')$ is normally generated by any meridian, we are only interested
in the case where the previous $h$ is nontrivial.

Thus when a character $\chi_i$ is parabolic and reducible, then $\chi_i$ is the 
character of an abelian representation $\rho_i$ for which 
$\rho_i(\mu_i)=\rho_i(\mu_{i+1})$. It follows that the global representation 
$\rho\in R(K')$ is in fact a representation of some $\hat K'$ in which the 
$i$-th tangle is omitted (see Remark~\ref{remark:small components}). 
Therefore we have:

\begin{Corollary}
\label{corollary:bending_par}
Let $(\chi_1,\ldots,\chi_n)\in 
X(B(\tfrac{\beta_1}{\alpha_1}))\times\cdots\times 
X(B(\tfrac{\beta_n}{\alpha_n}))$ be parabolic (i.e. $\chi_i$ of the meridian is 
$\pm 2$). If non-empty, the fibre $\pi^{-1}(\chi_1,\ldots,\chi_n)$ has 
dimension precisely $s-1$, where $s$ is the number of irreducible characters 
among $\chi_1,\ldots,\chi_n$.
\end{Corollary}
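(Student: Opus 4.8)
The plan is to combine the lower bound already contained in Corollary~\ref{corollary:bending} with a matching upper bound obtained by showing that the reducible factors, which here are forced to be abelian, contribute nothing to the dimension of the fibre. Since all the $\chi_i$ are parabolic, the inequality $\dim \pi^{-1}(\chi_1,\ldots,\chi_n) \geq s-1$ is immediate from Corollary~\ref{corollary:bending}, so the entire content of the statement is the reverse inequality $\dim \pi^{-1}(\chi_1,\ldots,\chi_n) \leq s-1$, where throughout we may assume $s\geq 1$ (so that the global representation is irreducible, any irreducible factor making the whole representation irreducible, and the dimension count via Lemma~\ref{lemma:varietycharacters} is uniform).

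First I would pin down the reducible factors. Let $i$ be an index for which $\chi_i$ is not irreducible. A parabolic character that is not irreducible must be the trivial character: a representation all of whose traces equal $\pm 2$ is unipotent and hence reducible, and by Remark~\ref{remark:abelian_par} any reducible parabolic representation of $\pi_1(B(\beta_i/\alpha_i))$ is abelian, sending every meridian to one and the same parabolic matrix. Consequently, for any $\rho$ in the fibre, its restriction to the $i$-th factor is abelian and satisfies $\rho(\mu_i)=\rho(\mu_{i+1})$.

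The key step is to use these identifications to collapse the amalgamated product. Let $\hat K'$ be the composite knot obtained (as in Remark~\ref{remark:small components}) by deleting exactly the $n-s$ tangles whose character $\chi_i$ is reducible, and let $\hat\pi$ denote the corresponding restriction map for $\hat K'$, whose target is the product of the $s$ factors $X(B(\beta_i/\alpha_i))$ with $\chi_i$ irreducible. I would show that restriction and abelian extension set up a dimension-preserving bijection between $\pi^{-1}(\chi_1,\ldots,\chi_n)$ and $\hat\pi^{-1}\big((\chi_i)_{\chi_i\ \mathrm{irreducible}}\big)$. In one direction, Remark~\ref{remark:small components} extends any representation of $\hat K'$ by the obvious abelian representation on each deleted tangle, and since that extension is parabolic its contribution to the restricted character is precisely the trivial (parabolic) $\chi_i$. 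In the other direction, the identities $\rho(\mu_i)=\rho(\mu_{i+1})$ across each deleted tangle show that $\rho$ factors through the contracted group, i.e. it is the abelian extension of a representation of $\hat K'$; because the abelian factor is completely determined by the common meridian image supplied by the neighbouring irreducible factors (up to finite data not affecting dimensions), this extension is unique. Since $\rho$ is irreducible, its character determines it up to conjugacy, and the correspondence is a bijection at the level of characters.

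Finally I would invoke the equality case of Corollary~\ref{corollary:bending}: since all the surviving characters are irreducible, the remark following that corollary gives $\dim \hat\pi^{-1}=s-1$, and the bijection above transports this to $\dim \pi^{-1}(\chi_1,\ldots,\chi_n)=s-1$. I expect the main obstacle to be the rigidity claim in the third step — verifying that the abelian tangles add no moduli, so that no extra dimension is hidden in the freedom of the reducible factors. This rests on two points that must be checked with care: that a non-irreducible parabolic character forces an abelian restriction with \emph{all} meridians equal (not merely a reducible one), and that once the boundary meridian is fixed by the adjacent irreducible factors the abelian extension is determined, so that the fibre over the full parabolic tuple is genuinely no larger than the fibre over its irreducible sub-tuple.
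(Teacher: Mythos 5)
Your proof is correct and takes essentially the same route as the paper: the paper likewise uses Remark~\ref{remark:abelian_par} to conclude that a reducible parabolic factor is abelian with $\rho_i(\mu_i)=\rho_i(\mu_{i+1})$, so that any representation in the fibre is actually a representation of the smaller composite knot $\hat K'$ (Remark~\ref{remark:small components}), and then invokes the equality case of Corollary~\ref{corollary:bending} for the all-irreducible tuple to get dimension exactly $s-1$. Your write-up only differs in making explicit the fibre bijection (and the $s\geq 1$ assumption) that the paper leaves implicit.
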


For non-parabolic characters that are reducible, we have to distinguish between 
those that are the character of only abelian representations and those that are 
also the character of (reducible) non-abelian ones.

The following lemma is due to \cite{Burde} and \cite{deRham}, cf \cite{HPS}.

\begin{Lemma} 
\label{lemma:rootsAlex}
Let $\chi\in X(B(\tfrac{\beta}{\alpha}))$ be a reducible character. Then the 
following are equivalent:
\begin{itemize}
\item[(i)] $\chi$ is the character of a non-abelian representation 
(besides abelian ones).
\item[(ii)] $\chi$ belongs to the Zariski closure of an irreducible component 
of $X(B(\tfrac{\beta}{\alpha}))$ that contains irreducible characters.
\item[(iii)] $\chi(\mu)=\theta+1/\theta$, where $\theta^2$ is a root of the 
Alexander polynomial of the knot and $\mu$ a meridian.
\end{itemize}
\end{Lemma}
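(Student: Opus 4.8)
The plan is to characterize reducible characters of the two-bridge knot group via upper-triangular representations and to identify precisely when such a reducible character can be promoted to a non-abelian one, tying this to the roots of the Alexander polynomial. First I would recall that $\pi_1(B(\tfrac{\beta}{\alpha}))=\langle\mu_1,\mu_2\mid r\rangle$ is generated by two conjugate meridians, so that up to conjugacy any reducible representation $\rho$ is upper-triangular:
$$
\rho(\mu_j)=\begin{pmatrix} \theta & * \\ 0 & \theta^{-1}\end{pmatrix},
$$
where $\theta$ is a common eigenvalue (the two meridians are conjugate, hence share eigenvalues), and $\chi(\mu)=\theta+\theta^{-1}$. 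The diagonal part of $\rho$ is itself a representation, namely the abelianization $\pi_1\to H_1=\mathbb{Z}$ sending each $\mu_j\mapsto\theta$; this shows every reducible character is also the character of an abelian (diagonal) representation, which is the baseline situation.

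Next I would address the equivalence of (i) and (iii), which is the analytic heart of the lemma. Writing $\rho(\mu_j)=\begin{pmatrix}\theta & x_j\\ 0 & \theta^{-1}\end{pmatrix}$ and feeding this into the single relator $r$, the off-diagonal entry of $\rho(r)=I$ yields a linear condition on the upper-triangular entries. The key computation is that this linear system, after using Fox calculus on the relator, has its solvability governed by the Alexander polynomial $\Delta(t)$ of the knot evaluated at $t=\theta^2$: the system admits a solution that is \emph{not} simultaneously diagonalizable (i.e. genuinely non-abelian, with non-proportional off-diagonal data) exactly when $\Delta(\theta^2)=0$. Thus $\chi$ is the character of a reducible non-abelian representation precisely when $\theta^2$ is a root of $\Delta$, which is condition (iii). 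I would cite the computation of Burde \cite{Burde} and de Rham \cite{deRham} (see also \cite{HPS}) for the precise Fox-derivative statement rather than redo it.

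For the equivalence with (ii), I would argue that a reducible non-abelian representation is a limit of irreducible representations and conversely. One direction: if $\chi$ lies in the closure of a component containing irreducible characters, then $\chi$ is a limit of irreducible characters; a standard semicontinuity/deformation argument (the reducible locus is where a non-abelian reducible representation degenerates, detected again by $\Delta(\theta^2)=0$) gives that $\chi$ satisfies (iii). For the reverse, when $\Delta(\theta^2)=0$ one constructs, via the non-trivial solution of the linear system above, a non-abelian reducible representation and then deforms it to nearby irreducible representations, so $\chi$ lies in the closure of an irreducible component; here Thurston's lower bound (each irreducible component of $X(B(\tfrac{\beta}{\alpha}))$ is at least a curve, as noted after Corollary~\ref{cor:plane curve}) ensures these irreducible characters genuinely exist near $\chi$. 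The main obstacle I anticipate is the (ii)$\Leftrightarrow$(iii) deformation step: proving that a reducible non-abelian character actually lies on the closure of the irreducible part requires an explicit deformation argument (or an appeal to the fact that for two-bridge knots the intersection points of the abelian line with the non-abelian part of the character curve are exactly the roots of the Alexander polynomial). This is precisely the classical result I would invoke from \cite{Burde,deRham,HPS} rather than reprove from scratch, since the bulk of the work is the Fox-calculus identification of the obstruction with $\Delta(\theta^2)$.
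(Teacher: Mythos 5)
Your proposal is correct and follows essentially the same route as the paper: the paper gives no proof of this lemma at all, simply attributing it to Burde \cite{Burde} and de~Rham \cite{deRham}, cf.\ \cite{HPS}, and your sketch reconstructs exactly the content of those references (the upper-triangular/Fox-calculus identification of the obstruction with $\Delta(\theta^2)$ for (i)$\Leftrightarrow$(iii), and the limit/deformation arguments for (ii)) before deferring to them for the details. The only point worth flagging is that the deformation step (iii)$\Rightarrow$(ii) in \cite{HPS} is stated for simple roots of the Alexander polynomial, but this is a caveat inherited from the paper's own citation rather than a defect of your argument relative to it.
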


\begin{Definition}
A character in $X(B(\tfrac{\beta}{\alpha}))$ is called \emph{generic reducible} 
if it is reducible and does not satisfy the assertions of 
Lemma~\ref{lemma:rootsAlex}.
\end{Definition}

By Lemma~\ref{lemma:rootsAlex}, since $\pm 1$ is not a root of the Alexander 
polynomial, when $\chi({\mu})=0$ or $\pm 2$, if $\chi$ is reducible then it is 
generic reducible (i.e. it is not the character of a reducible non-abelian 
representation). Hence as in Corollary~\ref{corollary:bending_par}, we have:

\begin{Corollary}
\label{corollary:bending_generic}
Let $(\chi_1,\ldots,\chi_n)\in 
X(B(\tfrac{\beta_1}{\alpha_1}))\times\cdots\times 
X(B(\tfrac{\beta_n}{\alpha_n})) $. Assume that if $\chi_i$ is reducible then it 
is generic reducible (i.e. it does not satisfy the assertions of 
Lemma~\ref{lemma:rootsAlex}). If non-empty, the fibre
$\pi^{-1}(\chi_1,\ldots,\chi_n)$ has dimension precisely $s-1$, where $s$ is 
the number of irreducible characters among $\chi_1,\ldots,\chi_n$.
\end{Corollary}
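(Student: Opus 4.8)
The plan is to reduce Corollary~\ref{corollary:bending_generic} to the already-established Corollary~\ref{corollary:bending_par} by observing that the only property of parabolic characters actually used in the parabolic case is that reducible parabolic characters are automatically \emph{generic reducible}. Concretely, the statement to prove differs from Corollary~\ref{corollary:bending_par} only in that the hypothesis ``$\chi_i$ parabolic'' is weakened to ``if $\chi_i$ is reducible then it is generic reducible'', so I would argue that the fibre-dimension count goes through verbatim once we understand how a generic reducible factor contributes (namely, not at all) to the dimension of the fibre.

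First I would set up the dimension count exactly as in Corollary~\ref{corollary:bending}: combining Lemmas~\ref{lemma:bending} and \ref{lemma:varietycharacters}, for the $s$ factors whose characters $\chi_i$ are irreducible, bending produces a subset of the fibre of dimension $\geq s-1$, giving the lower bound. This part is insensitive to whether we are in the parabolic or non-parabolic setting, so it transfers directly. The content of the corollary is therefore the matching \emph{upper} bound $\dim \pi^{-1}(\chi_1,\ldots,\chi_n)\le s-1$, and this is where the genericity hypothesis is essential.

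The key step is to analyse the reducible factors. By the defining property of a generic reducible character together with Lemma~\ref{lemma:rootsAlex}, a generic reducible $\chi_i$ is \emph{not} the character of any reducible non-abelian representation: every representation of $\pi_1(B(\tfrac{\beta_i}{\alpha_i}))$ with character $\chi_i$ is conjugate to an abelian (in fact diagonal) one. As in the discussion preceding Corollary~\ref{corollary:bending_par}, one then argues that such an abelian factor forces $\rho_i(\mu_i)=\rho_i(\mu_{i+1})$, so that the global representation $\rho\in R(K')$ factors through the group of a smaller Montesinos knot $\hat K'$ in which the $i$-th tangle has been deleted (Remark~\ref{remark:small components}). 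Iterating over all reducible indices, the fibre over $(\chi_1,\ldots,\chi_n)$ is identified with the fibre of the corresponding map for $\hat K'$ over the subtuple consisting of the $s$ irreducible characters, where every factor is now irreducible. For that all-irreducible fibre, Lemma~\ref{lemma:varietycharacters} turns the inequality of Corollary~\ref{corollary:bending} into the equality $s-1$, exactly as recorded in the remark immediately after that corollary.

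The main obstacle, and the only place the proof uses more than the parabolic case, is justifying that a generic reducible factor genuinely contributes nothing to the fibre dimension: one must rule out that the abelian representation realising $\chi_i$ could be deformed to a reducible \emph{non-abelian} representation with the same character, which would enlarge the fibre. This is precisely what the equivalence (i)$\Leftrightarrow$(iii) of Lemma~\ref{lemma:rootsAlex} forbids, since genericity means $\chi_i(\mu)$ is not of the form $\theta+1/\theta$ with $\theta^2$ a root of the Alexander polynomial. Once this is invoked, the factorisation-through-$\hat K'$ argument is identical to the parabolic one, and the conclusion follows by the same bookkeeping; I would present the proof essentially as ``repeat the proof of Corollary~\ref{corollary:bending_par}, replacing the appeal to parabolicity by the genericity hypothesis and Lemma~\ref{lemma:rootsAlex}''.
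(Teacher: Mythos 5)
Your proposal is correct and is essentially the paper's own proof: the paper disposes of this corollary with the phrase ``Hence as in Corollary~\ref{corollary:bending_par}, we have'', which is exactly your argument---by Lemma~\ref{lemma:rootsAlex} a generic reducible $\chi_i$ is the character of abelian representations only, forcing $\rho_i(\mu_i)=\rho_i(\mu_{i+1})$, so the global representation factors through the smaller knot $\hat K'$ of Remark~\ref{remark:small components}, reducing to the all-irreducible case where bending gives dimension exactly $s-1$. The only (harmless) slip is the parenthetical ``(in fact diagonal)'': a parabolic generic reducible character, which your hypotheses allow since $\pm1$ is never a root of the Alexander polynomial, is realised by abelian representations that need not be diagonalisable (Remark~\ref{remark:abelian_par}); since only abelianness is used, the argument is unaffected.
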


\section{Parabolic representations}
\label{section:parabolic}

In this section we shall define parabolic representations for the Montesinos
knot of Kinoshita-Terasaka type $M(\frac{\beta_1}{\alpha_1},\dots,
\frac{\beta_n}{\alpha_n},\frac{\beta_{n+1}}{\alpha_{n+1}})$ which are induced
by parabolic representations of the $n$ $2$-bridge knots
$B(\frac{\beta_i}{\alpha_i})$, $i=1,\dots,n$.

\begin{Definition}
\label{definition:parabolic variety}
We shall denote by $X_{\rm par}(K)$ the subvariety of the character variety of
a knot $K$ consisting of characters associated to \emph{parabolic 
representations}, i.e. those in which the meridian $\mu$ of $K$ is mapped to a 
parabolic matrix: $$X_{\rm par}(K)=\{\chi \in X(K)\mid \chi(\mu)=\pm 2\}.$$
Note that $X_{\rm par}(K)$ is never empty for it always contains at least the
trivial character (associated to the trivial representation).
\end{Definition}

From now on $K$ will denote the Montesinos knot
of Kinoshita-Terasaka type
 $M(\frac{\beta_1}{\alpha_1},
\dots, \frac{\beta_n}{\alpha_n},\frac{\beta_{n+1}}{\alpha_{n+1}})$ and $K'$ its
associated composite knot as defined in Section~\ref{section:Montesinos}.

For each $i=1,\dots,n$, let
$\rho_i:\pi_1(B(\frac{\beta_i}{\alpha_i}))\longrightarrow SL_2({\mathbb C})$ be
an irreducible parabolic representation of the $2$-bridge knot
$B(\frac{\beta_i}{\alpha_i})$. The existence of such a $\rho_i$ can be seen as
follows: $\pi_1(B(\frac{\beta_i}{\alpha_i}))$ admits an irreducible
representation in $PSL_2({\mathbb C})$ which corresponds to the holonomy
representation of $B(\frac{\beta_i}{\alpha_i})$ if the $2$-bridge knot is
hyperbolic or to the holonomy representation of the base of its fibration if
$B(\frac{\beta_i}{\alpha_i})$ is a torus knot. It then suffices to lift this
irreducible representation to $SL_2({\mathbb C})$ by choosing the same trace 
sign for each generator. This is consistent because all generators are
conjugate and because of the very nature of the Wirtinger's relations.

Since all non-diagonal parabolic matrices belong to two conjugacy classes
according to the sign of their trace, up to conjugacy, we can assume that 
$\rho_i(\mu_{i+1})=\rho_{i+1}(\mu_{i+1})$ for all $i=1,\dots,n-1$.
One can thus define a representation 
$$\rho=\rho_1*\dots*\rho_n:\pi_1(K')\longrightarrow SL_2({\mathbb C});$$
remark that in fact one can define several different representation in this
way, just by conjugating $\rho_{i+1}$ by an element in the centraliser of
$\rho_i(\mu_{i+1})=\rho_{i+1}(\mu_{i+1})$ as discussed in the previous section.

We now want to show that one can find representations of this kind which belong
to $X_{\rm par}(\Gamma)$, where $\Gamma$ is the common quotient of $\pi_1(K)$
and $\pi_1(K')$ defined in Section~\ref{section:Montesinos}. Recall from the
previous section that if $a=(a_1,\dots,a_{n-1})\in A_1\times\dots\times
A_{n-1}$ where $A_i$ is the projection in $PSL_2({\mathbb C})$ of the
centraliser of $\rho_i(\mu_{i+1})=\rho_{i+1}(\mu_{i+1})$, then $a\rho$ is the 
representation defined as $\rho_1*\, ^{a_1}\rho_2*\, ^{a_1a_2}\rho_3*\dots *\,
^{a_1a_2\dots a_{n-1}}\rho_n$, where $^x\rho$ is the representation obtained by
conjugating $\rho$ by $x$. We have

\begin{Lemma}
\label{lemma:rep par Gamma}
For each $\rho$, there is an $a\in A_1\times\dots\times A_{n-1}$, such that 
$a\rho(\mu_1)$ and $a\rho(\mu_{n+1})$ commute.
\end{Lemma}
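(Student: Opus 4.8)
The plan is to translate the commuting condition into a statement about the fixed points on $\mathbb{P}^1(\mathbb{C})=\partial_\infty\mathbf{H}^3$ of two parabolic isometries, and then to solve it by adjusting the bending parameters. First I would locate the two meridians in the amalgamated product: since $\pi_1(K')$ is the amalgamation of the $\pi_1(B(\tfrac{\beta_i}{\alpha_i}))$ along $\mu_2,\ldots,\mu_n$, the meridian $\mu_1$ lies in the first free factor and $\mu_{n+1}$ in the last. From the defining formula $a\rho=\rho_1*\,^{a_1}\rho_2*\cdots*\,^{a_1\cdots a_{n-1}}\rho_n$, bending leaves the first factor unchanged and conjugates the last by $g:=a_1\cdots a_{n-1}$. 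Hence $a\rho(\mu_1)=\rho_1(\mu_1)$ and $a\rho(\mu_{n+1})=g\,\rho_n(\mu_{n+1})\,g^{-1}$. Because each $\rho_i$ is an irreducible parabolic representation, every meridian is sent to a non-trivial parabolic, so both $a\rho(\mu_1)$ and $a\rho(\mu_{n+1})$ are non-trivial parabolic elements of $SL_2(\mathbb{C})$.

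Next I would invoke the elementary fact that two non-trivial parabolics of $SL_2(\mathbb{C})$ sharing the same fixed point on $\mathbb{P}^1(\mathbb{C})$ lie in a common abelian (unipotent up to sign) subgroup, and therefore commute. Consequently it suffices to arrange that $a\rho(\mu_{n+1})$ and $a\rho(\mu_1)$ have the same fixed point. Writing $p_1$ for the fixed point of $\rho_1(\mu_1)$ and $p_n$ for that of $\rho_n(\mu_{n+1})$, the fixed point of $a\rho(\mu_{n+1})$ is $g\cdot p_n$, so the entire lemma reduces to producing $a$ with $g\cdot p_n=p_1$.

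To solve this I would use the description of the centralizers recalled at the start of Section~\ref{section:bending}: as $\rho_i(\mu_{i+1})$ is parabolic, $A_i\cong\mathbb{C}$ is the full parabolic subgroup fixing the point $q_i$ (the fixed point of $\rho_i(\mu_{i+1})=\rho_{i+1}(\mu_{i+1})$), and it acts simply transitively on $\mathbb{P}^1(\mathbb{C})\setminus\{q_i\}$. Two transversality facts come for free from irreducibility: $p_1\neq q_1$, since $\rho_1(\mu_1)$ and $\rho_1(\mu_2)$ do not commute, and $p_n\neq q_{n-1}$, since $\rho_n(\mu_{n+1})$ and $\rho_n(\mu_n)$ do not commute. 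The construction is then in two steps. First set $a_2=\cdots=a_{n-2}=1$ and choose $a_{n-1}\in A_{n-1}$ so that $r:=a_{n-1}\cdot p_n\neq q_1$; this is possible because $a_{n-1}\mapsto a_{n-1}\cdot p_n$ is injective (the action is simply transitive and $p_n\neq q_{n-1}$), so its image is infinite and can avoid the single point $q_1$. Second, since $r,p_1\in\mathbb{P}^1(\mathbb{C})\setminus\{q_1\}$ and $A_1$ acts simply transitively there, take the unique $a_1\in A_1$ with $a_1\cdot r=p_1$. Then $g\cdot p_n=a_1\cdot r=p_1$, so $a\rho(\mu_1)$ and $a\rho(\mu_{n+1})$ are non-trivial parabolics with a common fixed point and hence commute, as desired.

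The main obstacle is precisely the possible degeneracy $r=q_1$: a single bending parameter need not move $p_n$ onto $p_1$, which is why one must spend one extra parameter (here $a_{n-1}$) to push $r$ off $q_1$ before correcting with $a_1$. This is where the hypothesis $n\ge 3$ enters, guaranteeing that $A_1$ and $A_{n-1}$ are distinct factors available for the two steps; for $n=2$ one notes that $p_n\neq q_1$ holds automatically by irreducibility of $\rho_2$, so $a_1$ alone already suffices.
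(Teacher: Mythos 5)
Your proof is correct and takes essentially the same route as the paper's: both reduce the commutation of $a\rho(\mu_1)$ and $a\rho(\mu_{n+1})$ to making their parabolic fixed points in $\hat{\mathbb C}$ coincide, using irreducibility of each $\rho_i$ to guarantee these fixed points avoid those of the amalgamating meridians, and the transitive action of the centralisers $A_1$ and $A_{n-1}$ on the complement of their fixed points to achieve the matching. The only cosmetic difference is bookkeeping: the paper conjugates the two end factors $\rho_1$ and $\rho_n$ so that both fixed points land on a common third point (treating the degenerate case $\operatorname{Fix}(\rho(\mu_2))=\operatorname{Fix}(\rho(\mu_n))$ separately), whereas you compose the bending parameters $a_{n-1}$ and $a_1$ to move one fixed point onto the other, avoiding the degeneracy by first pushing it off $q_1$.
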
 

\begin{proof}
Recall that the subgroup $\pi_1(B(\frac{\beta_i}{\alpha_i}))$ of $\pi_1(K')$ is
generated by $\mu_i$ and $\mu_{i+1}$, for $i=1,\dots, n$. This implies that
that $\rho_i(\mu_i)$ and $\rho_i(\mu_{i+1})$ cannot belong to the same
reducible subgroup of $SL_2({\mathbb C})$ for $\rho_i$ is irreducible. If we
consider the natural action of $SL_2({\mathbb C})$ on ${\mathbb C}{\mathbb
P}^1\cong\hat {\mathbb C} = \mathbb C\cup\{\infty\}$, this is equivalent to say that the fixed points of 
$\rho_i(\mu_i)$ and $\rho_i(\mu_{i+1})$ are different. Note also that the 
centraliser $A_i$ of $\rho_i(\mu_i)$ acts transitively on 
$\hat{\mathbb C}\setminus Fix(\rho_i(\mu_i))$.

We shall start by proving the lemma for $n=3$. Without loss of generality, we
may assume that the fixed point of $\rho_2(\mu_2)$ is $0$ and that of
$\rho_2(\mu_3)$ is $\infty$. We can choose an element $a_1\in A_1$ and an
element $a_2\in A_2$ such that the fixed points of $^{a_1}\rho_1(\mu_1)$ and
that of $^{a_2}\rho_3(\mu_4)$ are both equal to -say- $1$. This is possible 
because the fixed point of $\rho_1(\mu_1)$ is in $\hat{\mathbb C}\setminus
\{0\}$ and that of $\rho_3(\mu_4)$ is in $\hat{\mathbb C}\setminus\{\infty\}$.
In fact, one can choose the common fixed point for $^{a_1}\rho_1(\mu_1)$ and
$^{a_2}\rho_3(\mu_4)$ to be any point in ${\mathbb C}\setminus \{0\}$. It is 
now evident that $\rho_1(\mu_1)$ and $\rho_3(\mu_4)$ commute, because they are 
parabolic elements fixing the same point. The desired representation is then 
$^{a_1}\rho_1*\,\rho_2*\,^{a_2}\rho_3$, which is conjugate to $a\rho$ with 
$a=(a_1^{-1},a_2)$.

Assume now that $n>3$. The same argument applies using $\mu_n$ instead of
$\mu_3$. Note that, for the argument to work, there is no need for the fixed
points of $\rho_2(\mu_2)$ and of $\rho_{n-1}(\mu_n)$ to be distinct. In fact,
in the case when they coincide, one only needs to conjugate $\rho_n$ (and not
$\rho_1$).
\end{proof}

We already knew that the intersection $X_{\rm par}(K')\cap X(\Gamma)$ is not
empty, for it contains the trivial character. The previous lemma shows moreover
that this intersection contains an irreducible character. The bending procedure
seen in Section~\ref{section:bending} assures that this irreducible character
is contained in an irreducible component $Y'$ of $X_{\rm par}(K')$ of dimension 
at least $n-1$. It is now easy to see that the subvariety $X_{\rm par}(K')\cap 
X(\Gamma)$ is obtained by intersecting $X_{\rm par}(K')$ with the hypersurface
defined by the equation $\chi([\rho(\mu_1),\rho(\mu_{n+1})])=2$. It is indeed
elementary to see that the commutator of two parabolic elements is trivial if
and only if its trace is equal to $2$. As a consequence the dimension of 
$Y'\cap X(\Gamma)$ is at least $n-2$ since $Y'\cap X(\Gamma)$ is non-empty.

The above considerations together with the fact, seen in
Section~\ref{section:Montesinos}, that $X(\Gamma)\subset X(K)$ give the 
following:

\begin{Theorem}
\label{theorem:parabolic}
Let $K$ be a Montesinos knot of Kinoshita-Terasaka type with $n+1$ tangles,
$n\ge3$. The subvariety $X_{\rm par}(K)$ contains a parabolic component of 
dimension at least $n-2$. 

In addition, the number of such components can be arbitrarily large.
\end{Theorem}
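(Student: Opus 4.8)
The first assertion is the construction already indicated before the statement; let me record it in the form I will reuse. Fix an irreducible parabolic representation $\rho_i$ of each factor $B(\tfrac{\beta_i}{\alpha_i})$, set $\chi_i=\chi_{\rho_i}$, and form $\rho=\rho_1*\cdots*\rho_n\in R(K')$. Since all $\chi_i$ are irreducible and parabolic, Corollary~\ref{corollary:bending_par} gives that the (non-empty) fibre $\pi^{-1}(\chi_1,\dots,\chi_n)\subset X_{\rm par}(K')$ has dimension exactly $n-1$; let $Y'$ be a component of $X_{\rm par}(K')$ containing it, so $\dim Y'\ge n-1$. By Lemma~\ref{lemma:rep par Gamma} some bending $a\rho$ makes $a\rho(\mu_1)$ and $a\rho(\mu_{n+1})$ commute; in $\pi_1(K')$ the six commutators cutting out $\Gamma$ all reduce to $[\mu_1,\mu_{n+1}]$, so $\chi_{a\rho}\in Y'\cap X(\Gamma)$. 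As noted before the statement, $X_{\rm par}(K')\cap X(\Gamma)$ is the slice of $X_{\rm par}(K')$ by the single equation $\tau_{[\mu_1,\mu_{n+1}]}=2$; since this slice is non-empty, cutting $Y'$ by one equation yields $\dim\big(Y'\cap X(\Gamma)\big)\ge n-2$. As $X(\Gamma)\subset X(K)$ by Proposition~\ref{proposition:quotient}, this is a parabolic component of $X(K)$ of dimension $\ge n-2$.

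For the second assertion the plan is to let the seed $(\chi_1,\dots,\chi_n)$ vary. Given $m$, I would choose $B(\tfrac{\beta_1}{\alpha_1})$ with $\alpha_1$ large enough that it carries at least $m$ distinct irreducible parabolic characters — their number is finite but unbounded in $\alpha_1$, being governed by Riley's polynomial of degree $(\alpha_1-1)/2$ — and fix one irreducible parabolic character on each remaining factor. Running the construction above for the seeds $(\chi_1^{(j)},\chi_2,\dots,\chi_n)$, $1\le j\le m$, produces for every $j$ a non-empty subvariety $W_j:=Y'_j\cap X(\Gamma)\subset X_{\rm par}(K)$ of dimension $\ge n-2$; let $Z_j$ be an irreducible component of $X_{\rm par}(K)$ containing $W_j$.

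To separate the seeds I would use the coordinate function $\tau_{\mu_1\mu_2}$. On $X(\Gamma)$, viewed inside $X(K')$, the restriction of a character to $\pi_1(B(\tfrac{\beta_1}{\alpha_1}))=\langle\mu_1,\mu_2\rangle$ is parabolic, hence by Corollary~\ref{cor:plane curve} it is pinned down on the plane curve $X(B(\tfrac{\beta_1}{\alpha_1}))$ by the single value $\tau_{\mu_1\mu_2}$; distinct $\chi_1^{(j)}$ give distinct such values, each constant along the corresponding $W_j$. Thus the $W_j$ are pairwise disjoint and lie in distinct components of $X_{\rm par}(K)\cap X(\Gamma)$. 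It remains to upgrade this to distinctness of the $Z_j$ inside $X_{\rm par}(K)$.

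This last point is where the real work lies, and it is the step I expect to be the main obstacle. Granting the \emph{upper} bound $\dim Z_j\le n-2$, the argument closes at once: then $W_j\subset Z_j\cap X(\Gamma)$ has dimension $n-2=\dim Z_j$, so $Z_j\cap X(\Gamma)=Z_j$ by irreducibility of $Z_j$, whence $Z_j\subset X(\Gamma)$ and $\tau_{\mu_1\mu_2}$ is constant on $Z_j$ equal to its value on $W_j$; distinct seeds then force distinct $Z_j$, giving the claimed $m$ parabolic components of (exact) dimension $n-2$. Establishing this upper bound — equivalently, ruling out that the bending families fuse into a larger parabolic component on which the restriction to the first factor varies — is precisely the technical input deferred to Section~\ref{section:bound}, and I would prove the theorem by combining the elementary construction above with that estimate.
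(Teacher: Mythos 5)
Your proof of the first assertion is correct and coincides with the paper's own argument: amalgamate irreducible parabolic representations of the $n$ two-bridge factors, obtain a component $Y'$ of $X_{\rm par}(K')$ of dimension at least $n-1$ by bending, use Lemma~\ref{lemma:rep par Gamma} to see $Y'\cap X(\Gamma)\neq\emptyset$, and cut by the single equation $\tau_{[\mu_1,\mu_{n+1}]}=2$, which characterises commutation of parabolics, to lose at most one dimension.

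For the second assertion there is a genuine gap, located exactly at the step you single out, and your proposed way around it cannot be completed as described. You plan to separate the components $Z_j$ of $X_{\rm par}(K)$ by first granting the upper bound $\dim Z_j\le n-2$ and then deducing $Z_j\subset X(\Gamma)$. The difficulty is that the only bound available by the methods you invoke is for components of $X_{\rm par}(\Gamma)=X_{\rm par}(K)\cap X(\Gamma)$: these sit inside $X_{\rm par}(K')$, where the parabolic fibres of $\pi$ have dimension exactly $n-1$ by Corollary~\ref{corollary:bending_par} and the commutation condition cuts them properly. That tells you nothing about $Z_j$ itself, which a priori contains parabolic characters of $\pi_1(K)$ that do \emph{not} factor through $\Gamma$, i.e.\ representations with $\rho(\mu_i)\neq\rho(\mu_i')$; no computation inside $X(K')$ can see those. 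In the paper the logical dependency runs in the opposite direction from yours: the containment $Z_j\subset X(\Gamma)$ is proved first, as Lemma~\ref{lemma:K_par_in_Gamma_par}, and the dimension bound (Theorem~\ref{theorem:dim par}) is deduced from it --- its proof literally begins by invoking that lemma. So the ``technical input'' you defer to Section~\ref{section:bound} is not available independently of the containment you want to derive from it; as a proof strategy this is circular.

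The missing idea is the paper's deformation argument. If $Y$ is a component of $X_{\rm par}(K)$ through one of the constructed characters $\chi_{\rho_0}$ (for which three meridians have pairwise distinct parabolic fixed points, as the construction guarantees), and if $Y\not\subset X(\Gamma)$, the curve selection lemma yields an analytic path $\rho_s\in R_{\rm par}(K)$ with $\rho_s(f)\neq 1$ for $s\neq 0$, where $f=\mu_l^{-1}\mu_l'$ is the same element for every $l$. By Claim~\ref{claim:fixedpoints}, a fixed point of $\rho_s(f)$ must then approach the fixed point of $\rho_0(\mu_l)$ for \emph{each} $l$, which is impossible since $\rho_s(f)$ has at most two fixed points; Lemma~\ref{lemma:rootsAlex} then keeps the restriction to the last tangle abelian, so $\rho_s$ factors through $\Gamma$ after all. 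Once this containment is in hand, your bookkeeping (fibre separation via $\tau_{\mu_1\mu_2}$, one component per seed) is correct and gives the arbitrarily many components. A last minor point: the degree $(\alpha_1-1)/2$ of Riley's polynomial only bounds the number of parabolic characters from \emph{above}; to know there are arbitrarily many distinct seeds one needs a lower bound, for which the paper cites \cite{ORK}.
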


For the last assertion, we use that the parabolic component of a 
$2$-bridge knot consists of finitely many points, but its cardinality can be 
arbitrarily large by \cite{ORK}.

Remark that the irreducible components described in the above theorem 
correspond to the components studied by Riley for a specific Montesinos knot of 
Kinoshita-Terasaka type with $4$ tangles over fields of positive 
characteristic in \cite{RileyI}. More precisely, Riley only considered the 
$\mathbb F_p$-rational points corresponding to homomorphisms of the knot group 
to the finite group $SL_2(\mathbb F_p)$. Observe also that, although an easy 
upper bound on the dimension of $X_{\rm par}(K)$ can be given in terms of the 
number of generators of $\pi_1(K)$ or, equivalently, in terms of the number of 
rational tangles of $K$, at this point we are unable to establish the precise 
dimension of $X_{\rm par}(K)$. This requires some extra considerations and will 
be achieved in Theorem~\ref{theorem:dim par}. 

Finally note that the characters that we have constructed belong to components
of $X_{\rm par}(\Gamma)=X_{\rm par}(K)\cap X(\Gamma)$, which a priori can be 
contained in some larger components of $X_{\rm par}(K)$. We will see later that 
the components of $X_{\rm par}(K)\cap X(\Gamma)$ are in fact components of 
$X_{\rm par}(K)$ (see Lemma~\ref{lemma:K_par_in_Gamma_par}).

Remark~\ref{remark:abelian_par} allows to construct parabolic representations 
of $\pi_1(K')$ that are irreducible on some of the 
$\pi_1(B(\frac{\beta_i}{\alpha_i} ))$ and abelian on the others.

\begin{Remark}
\label{remark:small par}
Reasoning as in Remark~\ref{remark:small components}, one can see that it is
possible to construct other parabolic components of $X_{\rm par}(\Gamma)$ of 
smaller dimension by choosing some of the $\rho_i$s to be abelian: if $\ell$ 
representations among the $n$ are abelian, with $0\le \ell\le n-2$, the 
resulting components of $X_{\rm par}(\Gamma)$ have dimension $\ge n-\ell-2$. 
Note that if $\ell=n$ we obtain a point corresponding to the abelian parabolic 
character. On the other hand, the case $\ell=n-1$ is impossible, because the 
images of the two meridians in an irreducible representation cannot commute.   
\end{Remark}

\section{The non-parabolic case}
\label{section:non parabolic}

We turn now to consider the case of non-parabolic representations of a
Montesinos knot of Kinoshita-Terasaka type $K=M(\frac{\beta_1}{\alpha_1},\dots,
\frac{\beta_n}{\alpha_n},\frac{\beta_{n+1}}{\alpha_{n+1}})$ arising from
representations of the $2$-bridge knots $B(\frac{\beta_i}{\alpha_i})$, 
$i=1,\dots,n$. The construction will be similar to the one seen in the 
previous section. We start by summarising some properties of representations 
for $2$-bridge knots.

\begin{Proposition}
\label{remark:2-bridge}
The character variety of a $2$-bridge knot $X(B(\frac{\beta}{\alpha} ))$ is a 
union of plane curves ${\mathcal C}_0\cup\cdots \cup {\mathcal C}_ r\subset 
\mathbb C^2$, $r\geq 1$. Moreover, the map
$$
\tau_{\mu} : \mathcal C_j \to \mathbb C,
$$
where $\mu$ denotes a meridian, is proper.

The reducible characters form a component 
$\mathcal C_ 0= X_{\rm red}(B(\frac{\beta}{\alpha} ))$ such that 
$\tau_{\mu} :\mathcal C_0\to \mathbb C$ is an isomorphism. 
\end{Proposition}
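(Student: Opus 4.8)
The plan is to read off the coarse structure from results already established and then to isolate the one genuinely analytic point, the properness of $\tau_\mu$. By Corollary~\ref{cor:plane curve}, $X(B(\tfrac{\beta}{\alpha}))$ is a curve in $\mathbb{C}^2$ with coordinates $x:=\tau_\mu$ and $z:=\tau_{\mu_1\mu_2}$; since (as recalled just after that corollary) every irreducible component of a knot character variety has dimension at least one by Thurston, each irreducible component of $X(B(\tfrac{\beta}{\alpha}))$ is exactly a plane curve, so $X(B(\tfrac{\beta}{\alpha}))=\mathcal C_0\cup\cdots\cup\mathcal C_r$ as claimed.

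Next I would pin down the reducible locus. A reducible representation is conjugate to an upper-triangular one and hence has the same character as its diagonal (abelian) part; abelian representations factor through $H_1=\mathbb{Z}=\langle\mu\rangle$, so they are given by $\mu\mapsto\mathrm{diag}(s,s^{-1})$ with $s\in\mathbb{C}^*$. A direct computation gives $x=s+s^{-1}$ and $z=s^2+s^{-2}=x^2-2$, so $X_{\rm red}(B(\tfrac{\beta}{\alpha}))=\{z=x^2-2\}$ is an irreducible rational curve on which $\tau_\mu$ restricts to the coordinate $x$; thus $\tau_\mu\colon\mathcal C_0\to\mathbb{C}$ is an isomorphism. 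To see that this $\mathcal C_0$ really is a component and not part of another one, I would invoke Lemma~\ref{lemma:rootsAlex}: a reducible character lies in the closure of the irreducible locus only when $\chi(\mu)=\theta+\theta^{-1}$ with $\theta^2$ a root of the Alexander polynomial, so $\mathcal C_0$ meets $\mathcal C_1\cup\cdots\cup\mathcal C_r$ in finitely many points and is therefore a genuine component. Finally $r\ge1$ because $B(\tfrac{\beta}{\alpha})$ carries an irreducible $SL_2(\mathbb{C})$ representation (the lift of the hyperbolic holonomy, or of the holonomy of the base of the Seifert fibration in the torus-knot case, exactly as in Section~\ref{section:parabolic}), producing at least one further component.

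The substantive step is the properness of $\tau_\mu$. It suffices to prove that $\tau_\mu\colon X(B(\tfrac{\beta}{\alpha}))\to\mathbb{C}$ is proper, since its restriction to each closed component $\mathcal C_j$ is then automatically proper. On $\mathcal C_0$ this is clear from $z=x^2-2$. On a component carrying irreducible characters I would use the Riley normal form: up to conjugacy an irreducible representation sends $\mu_1\mapsto\bigl(\begin{smallmatrix}s&1\\0&s^{-1}\end{smallmatrix}\bigr)$ and $\mu_2\mapsto\bigl(\begin{smallmatrix}s&0\\u&s^{-1}\end{smallmatrix}\bigr)$, where $s+s^{-1}=x$ and an elementary trace computation gives $u=z-x^2+2$; the single relation of the $2$-bridge group then translates into the vanishing of the Riley polynomial $\phi(s,u)=0$. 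Bounding $x$ forces $s$ (and $s^{-1}$) to lie in a compact subset of $\mathbb{C}^*$, so boundedness of $z$ is equivalent to boundedness of $u$, and that is exactly what monicity of $\phi$ in $u$ delivers. Passing to the Zariski closure, the same estimate holds on all of $\mathcal C_1,\dots,\mathcal C_r$.

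I expect the main obstacle to be establishing this last input, namely that the Riley polynomial $\phi$ is monic (up to a unit $\pm s^k$) as a polynomial in $u$, equivalently that $z$ is integral over $\mathbb{C}[\tau_\mu]$. This does \emph{not} follow from generalities—two bounded-trace matrices can have a product of unbounded trace—and genuinely uses the precise alternating word $\Omega=\Omega(\mu_1,\mu_2)$ of length $\alpha-1$ defining the $2$-bridge relator, each syllable contributing one factor of $u$ to the top $u$-degree term and so forcing the leading coefficient to be a unit. I would verify this either by an induction on the syllable length of $\Omega$, tracking the highest power of $u$ appearing in the entries of $\rho(\Omega)$, or by citing the explicit computation of $\phi$ in \cite{RileyI}; once it is in hand, properness, and with it the whole proposition, follows.
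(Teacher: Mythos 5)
Your decomposition into plane curves and your treatment of $\mathcal C_0$ follow essentially the paper's own route: the paper also invokes Corollary~\ref{cor:plane curve} and the fact that a reducible character is the character of an abelian representation, which factors through the abelianisation $\mathbb Z=\langle\mu\rangle$; your explicit parabola $z=x^2-2$ and the appeal to Lemma~\ref{lemma:rootsAlex} to see that $\mathcal C_0$ is a genuine component just make this concrete. The genuine divergence is in the properness of $\tau_\mu$, which is also where the paper puts the real content. The paper's proof is one line of Culler--Shalen theory: if $\tau_\mu$ stayed bounded along a sequence of characters going to infinity in some $\mathcal C_j$, the associated ideal point would detect an essential surface with meridional boundary slope (or a closed essential surface), and by the Hatcher--Thurston classification \cite{HatcherThurston} $2$-bridge knot exteriors contain no such surfaces. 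Your proof is instead algebraic: Riley's normal form identifies irreducible characters with zeros of the Riley polynomial $\phi(s,u)$, and monicity of $\phi$ in $u$ (up to a unit in $\mathbb C[s,s^{-1}]$) bounds $u$, hence $z=x^2-2+u$, whenever $x=s+s^{-1}$ is bounded. Both routes are valid; the paper's is shorter given the machinery and needs no normal form, while yours is self-contained at the level of matrices, gives effective bounds, and correctly isolates where the knot-group structure (the alternating relator) enters rather than hiding it in topology.

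The one loose end is exactly the input you flagged. Monicity of $\phi$ in $u$ for arbitrary $s$ is a true theorem, but it is not in the sources you point to: the cited parabolic papers \cite{RileyI,RileyII} only treat $s=\pm1$. The general statement is due to Riley's later work on nonabelian representations of $2$-bridge knot groups (Quart.\ J.\ Math.\ Oxford, 1984), which is not in this paper's bibliography; alternatively, your proposed induction on the syllables of the relator word does work, but it must actually track the top $u$-coefficient of each matrix entry of $\rho(w)$ to rule out cancellation, which you have not done. Until one of these is supplied, your properness step quotes an external fact rather than proving it. The remaining technical points you wave at are fine: restriction of a proper map to a closed component is proper, and passing from the dense set of irreducible characters to all of $\mathcal C_j$ is harmless because, by Lemma~\ref{lemma:rootsAlex}, each $\mathcal C_j$ with $j\geq 1$ contains only finitely many reducible characters.
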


\begin{proof}
The components are plane curves by  Corollary~\ref{cor:plane curve}.
Properness of $\tau_\mu$ means that whenever a sequence 
$\chi_n\in X(B(\frac{\beta}{\alpha} ))$ goes to infinity, then 
$\tau_{\mu}(\chi_n)=\chi_n({\mu})\to \infty$: this is a consequence of the fact 
that $\mu$ is not a boundary slope (cf. \cite{HatcherThurston}). For the second 
assertion, just notice that each reducible character is also the character of 
an abelian representation, and the abelianisation of a knot group is 
$\mathbb Z$, generated by the representative of $\mu$.
\end{proof}

It follows from Lemma~\ref{lemma:rootsAlex} that for $j>0$ the component 
$\mathcal C_j$ contains only a finite number of reducible characters, because 
the Alexander polynomial has a finite number of zeros. Thus  for almost every 
value of $\tau\in{\mathbb C}\setminus \{\pm2\}$ and for all $1\le i\le n$ there 
is an irreducible representation $\rho_i$ of 
$\pi_1(B(\frac{\beta_i}{\alpha_i}))$ such that $\chi_{\rho_i}(\mu_i)=\tau$. 
Since any two matrices of $SL_2({\mathbb C})$ having the same trace 
$\tau\neq\pm 2$ are conjugate, it follows easily that the $\rho_i$s can be 
matched together to give a representation of the composite knot $K'$. It 
follows at once that for each choice of irreducible $1$-dimensional components 
$Z_1\subset X(B(\frac{\beta_1}{\alpha_1}))$,..., 
$Z_n\subset X(B(\frac{\beta_n}{\alpha_n}))$, each containing irreducible 
characters, one can construct an irreducible component ${\mathcal C}$ of 
$X(K')$. The bending argument of Corollary~\ref{corollary:bending} shows that 
the dimension of ${\mathcal C}$ is at least $n$, the extra dimension with 
respect to the parabolic case coming from the fact that $\tau$ is a free 
parameter.

We now want to show that ${\mathcal C}\cap X(\Gamma)$ is non-empty. The 
argument will follow the same lines of Lemma~\ref{lemma:rep par Gamma}. 
However, since the elements $\rho(\mu_1)$ and $\rho(\mu_{n+1})$ are not 
parabolic, they commute if and only if they have the same axis (cf.
Section~\ref{section:bending}). Thus this time we need to keep track of two 
points in $\hat{\mathbb C}$. As in the parabolic case, one of these two points 
can be moved in an arbitrary way, however the position of the second point will 
be determined by the position of the first, because cross-ratios are preserved 
by the $SL_2({\mathbb C})$-action on $\hat{\mathbb C}$. The following 
elementary observations will be useful.

\begin{Lemma}
\label{lemma:cross-ratio}

\begin{itemize}

\item For each $\lambda \in {\mathbb C}\setminus \{0,1\}$, the subgroup of 
$SL_2({\mathbb C})$ which fixes pointwise $a$ and $b$ in $\hat{\mathbb C}$ acts simply
transitively on the pairs of distinct points $p$, $q$ of 
$\hat{\mathbb C}\setminus \{a,b\}$ such that the cross-ratio
$[a,b,p,q]=\lambda$.

\item Let $a, b, c, d\in\hat{\mathbb C}$ be four pairwise different points. For all $\lambda_1,\lambda_2\in {\mathbb C}\setminus \{0,1\}$, there are two points
$x\neq y\in\hat{\mathbb C}\setminus \{a,b,c,d\}$ such that
$[a,b,x,y]=\lambda_1$ and $[c,d,x,y]=\lambda_2$.
\end{itemize}
\end{Lemma}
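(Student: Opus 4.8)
The plan is to normalise the two distinguished points and reduce both assertions to an elementary computation with the multiplicative group $\mathbb C^*$, exploiting that the cross-ratio is invariant under the $SL_2(\mathbb C)$-action on $\hat{\mathbb C}$.

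For the first assertion, I would pick $g\in PSL_2(\mathbb C)$ with $g(a)=0$ and $g(b)=\infty$; since cross-ratios are preserved and conjugation by $g$ carries the pointwise stabiliser of $\{a,b\}$ to the pointwise stabiliser of $\{0,\infty\}$, it suffices to treat $a=0$, $b=\infty$. The pointwise stabiliser of $0$ and $\infty$ is the diagonal torus, which acts on $\hat{\mathbb C}\setminus\{0,\infty\}=\mathbb C^*$ as $z\mapsto\mu z$, $\mu\in\mathbb C^*$. A direct computation gives $[0,\infty,p,q]=p/q$ (up to the chosen convention, which is immaterial), so the level set $\{[0,\infty,p,q]=\lambda\}$ is exactly $\{(\lambda q,q)\mid q\in\mathbb C^*\}$; here $p\neq q$ is automatic because $\lambda\neq1$, and $p,q\neq0,\infty$ because $\lambda\neq0$. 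The torus acts on this set by $q\mapsto\mu q$, which is visibly simply transitive, proving the claim (the harmless kernel $\{\pm I\}$ of the action is absorbed by passing to the effective action).

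For the second assertion, I would again normalise $a=0$, $b=\infty$ by a single M\"obius transformation applied to all four points; then $c,d$ become two points of $\mathbb C^*$, still distinct from one another, and the condition $[0,\infty,x,y]=\lambda_1$ becomes simply $x=\lambda_1 y$. Substituting $x=\lambda_1 y$ into $[c,d,x,y]=\lambda_2$ and clearing denominators yields, after expansion, a polynomial equation in $y$ with leading coefficient $\lambda_1(1-\lambda_2)$ and constant term $cd(1-\lambda_2)$. Both are nonzero precisely because $\lambda_1\neq0$, $\lambda_2\neq1$ and $c,d\neq0$, so this is a genuine quadratic; as $\mathbb C$ is algebraically closed it has (two) roots $y$, each of which determines a pair $(x,y)=(\lambda_1 y,y)$.

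It remains to check that the roots are admissible, and this is where I expect the only real work. One verifies that $y\neq0,\infty$ (from the nonzero constant and leading coefficients), that $x\neq y$ (from $\lambda_1\neq1$), and that none of $x,y$ equals $c$ or $d$: each such coincidence would force the cross-ratio $[c,d,x,y]$ to have a vanishing denominator while its numerator stays nonzero, so it could not be a root. Since all of these exclusions use only the hypotheses $\lambda_1,\lambda_2\in\mathbb C^*\setminus\{1\}$ and the pairwise distinctness of $a,b,c,d$, both roots in fact yield valid pairs, which also explains the phrasing ``there are two points $x\neq y$''. The main obstacle is thus organising these nondegeneracy and nonvanishing checks, which is bookkeeping rather than a conceptual difficulty.
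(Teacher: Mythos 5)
Your proposal is correct and follows essentially the same route as the paper: normalise $a=0$, $b=\infty$ (the paper additionally sets $c=1$), use $[0,\infty,x,y]=x/y$ to reduce to $x=\lambda_1 y$, substitute into the second cross-ratio to get a quadratic in $y$ whose leading coefficient and constant term are nonzero by the hypotheses, and then exclude the degenerate values of the roots; likewise your first bullet is just the paper's synthetic argument (simple transitivity of the two-point stabiliser plus uniqueness of the fourth point) written in coordinates. One small imprecision worth fixing in the bookkeeping: for the coincidences $x=c$ or $y=d$ it is the \emph{numerator} of $[c,d,x,y]$ that vanishes, so the contradiction comes from noting that the cleared equation then forces the denominator to vanish as well (since $\lambda_2\neq 0$), which leads to $x=y$ or $c=d$; the paper sidesteps this by directly substituting the forbidden values into the quadratic.
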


\begin{proof}
The first part of the lemma follows from the fact that the subgroup of
$SL_2({\mathbb C})$ which fixes pointwise two points of $\hat{\mathbb C}$ acts 
simply transitively on the remaining points and the fact that once $a$, $b$ and 
$p$ are fixed there is a unique $q$ such that $[a,b,p,q]=\lambda$.

For the second part, without loss of generality, we may assume that $a=0$,
$b=\infty$ and $c=1$. We must find two points $x$ and $y$ such that
$\lambda_1=[0,\infty,x,y]=\frac{x}{y}$ and
$\lambda_2=[1,d,x,y]=\frac{(1-x)(d-y)}{(d-x)(1-y)}$. From the first condition
we get $x=y\lambda_1$. Replacing in the second we get a polynomial equation of
degree $2$ in the unknown $y$:
\begin{equation}
\label{eqn:yfromcrossratio} 
\lambda_1(\lambda_2-1)y^2+y(1+d\lambda_1-d\lambda_2-\lambda_1\lambda_2)+
d(\lambda_2-1)=0.
\end{equation}
This equation always admits a solution in ${\mathbb C}$ since
$\lambda_1(\lambda_2-1)\neq 0$ by hypothesis. We still need to verify that the
solution is admissible. Note that $y=0$ cannot be a solution for 
$d(\lambda_2-1)\neq 0$, $y=1$ cannot be a solution for $d(\lambda_1-1)\neq 1$, 
and $y=d$ cannot be a solution for $d(1-d)(1-\lambda_1)\lambda_2\neq 0$. 
Similarly one sees that $x$ cannot be equal to $0$, $1$ or $d$.
\end{proof}

The fact that Equation~\ref{eqn:yfromcrossratio} has two solutions may be
understood in terms of symmetries as follows. Consider the  rotation $r$ of
angle $\pi$ in hyperbolic space $\mathbf H^3$, that permutes $a$ and $b$, as 
well as $c$ and $d$. Thus if $\overline{ab}$ and $\overline{cd}\subset \mathbf 
H^3$ denote the hyperbolic geodesics with respective end-points $a$ and $b$, 
and $c$ and $d$, then $r$ is the $\pi$-rotation around the geodesic
perpendicular to $\overline{ab}$ and $\overline{cd}$. Moreover $r$ induces an
involution on each of these geodesics that reverses the orientation. 

It is easy to check that if $(x,y)$ satisfies the second assertion of
Lemma~\ref{lemma:cross-ratio}, then so does $(r(y),r(x))$, and those are all
solutions. Namely, in the the notation of the proof ($a=0$, $b=\infty$, $c=1$ 
and $d=d$) $r$ is the M\"obius transformation of 
$\hat{\mathbb{C}}=\mathbb{C}\cup\{\infty\}$, $z\mapsto d/z$ for all 
$z\in \hat{\mathbb{C}}$. Therefore $r(y)=d/y$ and $r(x)=d/x=\frac{d}{\lambda_1 
y}$. Notice that the product of the solutions of 
Equation~\ref{eqn:yfromcrossratio} is precisely $y \, r(x)= d/ \lambda_1$.

\begin{Remark}
\label{remark:2solutions_yfromcrossratio} 
There are precisely two ordered pairs of points satisfying the second assertion
of Lemma~\ref{lemma:cross-ratio}, $(x, y)$ and $(r(y), r(x))$, where $r$ is the 
hyperbolic rotation of order two that satisfies $r(a)=b$ and $r(c)=d$.
\end{Remark}

With the same notation as in the previous section we have:

\begin{Lemma}
\label{lemma:rep non-par Gamma}
For each $\rho$ there is an $a\in A_1\times\dots\times A_{n-1}$, such that 
$a\rho(\mu_1)$ and $a\rho(\mu_{n+1})$ commute.
\end{Lemma}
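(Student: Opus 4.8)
The plan is to mirror the proof of Lemma~\ref{lemma:rep par Gamma}, replacing the single fixed point of a parabolic element by the \emph{pair} of fixed points (the axis) of a non-parabolic one, and using Lemma~\ref{lemma:cross-ratio} in place of the transitivity of the centralisers on $\hat{\mathbb C}\setminus Fix$. As in the parabolic case, up to conjugating the whole representation one reduces to modifying only the two outer factors: conjugate $\rho_1$ by an element $a_1\in A_1$, the group fixing pointwise the two fixed points $\{a,b\}$ of $\rho_2(\mu_2)$, and conjugate $\rho_n$ by an element $a_{n-1}\in A_{n-1}$, the group fixing pointwise the two fixed points $\{c,d\}$ of $\rho_{n-1}(\mu_n)$, leaving $\rho_2,\dots,\rho_{n-1}$ untouched. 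Since each $\rho_i$ is irreducible and $\pi_1(B(\tfrac{\beta_i}{\alpha_i}))$ is generated by $\mu_i$ and $\mu_{i+1}$, the axis $\{p,q\}$ of $\rho_1(\mu_1)$ is disjoint from $\{a,b\}$ and the axis $\{p',q'\}$ of $\rho_n(\mu_{n+1})$ is disjoint from $\{c,d\}$; hence the cross-ratios $\lambda_1=[a,b,p,q]$ and $\lambda_2=[c,d,p',q']$ both lie in $\mathbb C\setminus\{0,1\}$.

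First I would treat the \emph{generic case}, in which the four points $a,b,c,d$ are pairwise distinct. This is automatic when $n=3$, where $\{a,b\}$ and $\{c,d\}$ are the axes of $\rho_2(\mu_2)$ and $\rho_2(\mu_3)$ and must be disjoint by irreducibility of $\rho_2$. By the second assertion of Lemma~\ref{lemma:cross-ratio} there is a pair $\{x,y\}$ with $[a,b,x,y]=\lambda_1$ and $[c,d,x,y]=\lambda_2$; by the first assertion, $A_1$ acts simply transitively on the ordered pairs with cross-ratio $\lambda_1$ relative to $(a,b)$, so some $a_1\in A_1$ carries $(p,q)$ to $(x,y)$, and similarly some $a_{n-1}\in A_{n-1}$ carries $(p',q')$ to $(x,y)$. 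Then ${}^{a_1}\rho_1(\mu_1)$ and ${}^{a_{n-1}}\rho_n(\mu_{n+1})$ have the same fixed-point set $\{x,y\}$; being non-parabolic with a common axis, they are simultaneously diagonalisable and therefore commute, which produces the desired $a$.

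The main obstacle is the \emph{degenerate case}, which can occur only for $n\ge 4$: there the axes $\{a,b\}$ and $\{c,d\}$ sit in distinct summands $\rho_2$ and $\rho_{n-1}$, so nothing forces them to be disjoint, and when they share an endpoint Lemma~\ref{lemma:cross-ratio} no longer applies. In contrast with the parabolic situation, conjugating a single factor does not rescue the argument, because the axes reachable through $A_1$ (resp. $A_{n-1}$) are rigidly confined to a single cross-ratio class. Here I would perform a preliminary bending at an intermediate gluing: conjugating $\rho_{n-1}$ and the subsequent factors by a suitable $a_{n-2}\in A_{n-2}$ moves the axis $\{c,d\}$ of $\rho_{n-1}(\mu_n)$ along a one-parameter family, while leaving $\rho_1,\dots,\rho_{n-2}$ — and hence $\{a,b\}$ together with the axis of $\mu_1$ — unchanged (this uses $n-2\ge 2$). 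The axis of $\mu_n$ is genuinely moved, since $A_{n-2}$ fixes the axis of $\mu_{n-1}$ but not that of $\mu_n$, the two being distinct by irreducibility of $\rho_{n-1}$.

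Only finitely many values of $a_{n-2}$ can bring the new endpoints onto $\{a,b\}$, so a generic choice restores the hypothesis that $a,b,c,d$ are pairwise distinct; the generic argument then applies verbatim, with $A_{n-1}$ replaced by its conjugate $a_{n-2}A_{n-1}a_{n-2}^{-1}$, which fixes the new axis of $\mu_n$. Collecting the chosen elements into a single $a\in A_1\times\cdots\times A_{n-1}$ finishes the proof. The delicate point, which I would verify carefully, is exactly this last reduction: that an intermediate bending always breaks the coincidence, i.e. that the locus of bad $a_{n-2}$ is finite and its complement in $A_{n-2}\cong\mathbb C^*$ therefore non-empty.
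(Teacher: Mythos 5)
Your proposal is correct and follows essentially the same route as the paper: the generic case is exactly the paper's $n=3$ argument (matching axes via the two parts of Lemma~\ref{lemma:cross-ratio}, then using that non-parabolic elements with a common axis commute), and the degenerate case for $n\ge 4$ is resolved precisely as in the paper, by a preliminary bending by an element of $A_{n-2}$, the centraliser of $\rho_{n-1}(\mu_{n-1})$, chosen so that the axes of $\rho(\mu_2)$ and $\rho(\mu_n)$ become disjoint. Your write-up is in fact more detailed than the paper's one-sentence treatment of that degenerate case (you justify that only finitely many bending parameters are bad), but the underlying argument is the same.
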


\begin{proof}
Note that as in the parabolic case the two elements $\rho_i(\mu_i)$ and
$\rho_i(\mu_{i+1})$ acting on $\hat{\mathbb C}$ have no fixed point in common,
for the representation $\rho_i$ is irreducible by hypothesis. Assume that
$n=3$. Let $a$ and $b\in\hat{\mathbb C}$ be the fixed points of 
$\rho_2(\mu_2)$, and $c$ and $d\in\hat{\mathbb C}$ be the fixed points of 
$\rho_2(\mu_3)$. Let $p,q \in\hat{\mathbb C}$ and $r,s \in\hat{\mathbb C}$ be 
the fixed points of $\rho_1(\mu_1)$ and $\rho_3(\mu_4)$ respectively. We define
$\lambda_1=[a,b,p,q]$ and $\lambda_2=[c,d,r,s]$. The previous lemma tells that
there is an element in the centraliser of $\rho_2(\mu_2)$ and one in the
centraliser of $\rho_2(\mu_3)$ that conjugate $\rho_1(\mu_1)$ and
$\rho_3(\mu_4)$ respectively to elements with the same fixed points. As a
consequence, one can find a representation $a\rho$ such that $a\rho(\mu_1)$ and
$a\rho(\mu_{n+1})$ commute.

If $n>3$, consider the elements $\rho_1(\mu_2)$ and $\rho_n(\mu_n)$: if they
have no common fixed point it suffices to apply verbatim the argument seen for
$n=3$. Otherwise, one can start by conjugating $\rho_{n-1}$ and $\rho_n$ by an
element in the centraliser of $\rho_{n-1}(\mu_{n-1})$ to make sure that this is
indeed the case.
\end{proof}

\begin{Remark}
\label{remark:equalorinverse}
Two hyperpolic isometries which are conjugate and have the same axis are either 
equal or inverses of one another. By appropriately choosing the order of the 
end-points of the axis of $\rho_1(\mu_1)$ and $\rho_n(\mu_{n+1})$ we can ensure 
that $a\rho(\mu_1)$ and $a\rho(\mu_{n+1})$ satisfy either one of the situations 
above. We will always assume we have made the choice that 
$a\rho(\mu_1)= a\rho(\mu_{n+1})$.
\end{Remark}

\begin{Proposition}
\label{proposition:exoticcomponents}
Let $K$ be a Montesinos knot of Kinoshita-Terasaka type with $n+1$ tangles,
$n\ge3$. The subvariety $X(\Gamma)$ of $X(K)$ contains components of dimension 
$\geq n-2$ on which the trace of the meridian is non-constant.

In addition, the number of such components can be arbitrarily large.
\end{Proposition}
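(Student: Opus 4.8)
The plan is to mirror the strategy already established for the parabolic case in Theorem~\ref{theorem:parabolic}, replacing parabolic representations with non-parabolic ones and using the cross-ratio machinery of Lemma~\ref{lemma:cross-ratio} in place of the simpler parabolic fixed-point argument. First I would invoke the construction preceding this proposition: for almost every value $\tau\in\mathbb{C}\setminus\{\pm2\}$ and each choice of irreducible one-dimensional components $Z_1\subset X(B(\beta_1/\alpha_1)),\dots,Z_n\subset X(B(\beta_n/\alpha_n))$ containing irreducible characters, one can match the $\rho_i$ together (since any two $SL_2(\mathbb{C})$-matrices of equal trace $\tau\neq\pm2$ are conjugate) to produce a representation $\rho$ of $\pi_1(K')$, and the bending argument of Corollary~\ref{corollary:bending} yields an irreducible component $\mathcal{C}\subset X(K')$ of dimension at least $n$.

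Next I would show $\mathcal{C}\cap X(\Gamma)\neq\emptyset$ by appealing directly to Lemma~\ref{lemma:rep non-par Gamma}, which (together with Remark~\ref{remark:equalorinverse}) produces, for each such $\rho$, an element $a\in A_1\times\dots\times A_{n-1}$ with $a\rho(\mu_1)=a\rho(\mu_{n+1})$; in particular these two images commute, so $a\rho$ factors through $\Gamma$. Thus $\mathcal{C}\cap X(\Gamma)$ contains an irreducible character. The key dimension count is that $X(\Gamma)=X(K')\cap X(\Gamma)$ is cut out of $X(K')$ by the single condition that $\rho(\mu_1)$ and $\rho(\mu_{n+1})$ commute, which for non-parabolic elements amounts to their having the same axis. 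Imposing this is expected to drop the dimension by at most two (rather than the single drop in the parabolic case): one equation forces the first fixed point and a second forces the remaining fixed point, since by Remark~\ref{remark:2solutions_yfromcrossratio} the axis of one element is determined up to a finite choice once the other is fixed. Hence $\dim(\mathcal{C}\cap X(\Gamma))\geq n-2$, and since $X(\Gamma)\subset X(K)$, these give components of $X(K)$ of dimension $\geq n-2$.

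To see that the trace of the meridian is non-constant on the resulting component, I would exploit that $\tau$ is a free parameter in the construction: the component $\mathcal{C}$ was built so that $\chi(\mu_i)=\tau$ can range over almost all of $\mathbb{C}\setminus\{\pm2\}$, and the commuting condition imposed to pass to $X(\Gamma)$ constrains only the relative position of the axes of $\rho(\mu_1)$ and $\rho(\mu_{n+1})$, not the common trace $\tau$. Since all meridians of $K$ are conjugate, $\tau_\mu$ varies with $\tau$, so the component is not contained in any hyperplane $\{\tau_\mu=\text{const}\}$; in particular it is distinct from the Teichm\"uller components on which $\tau_\mu\equiv0$. For the final assertion that the number of such components can be made arbitrarily large, I would again invoke the result of Ohtsuki, Riley and Sakuma \cite{ORK} that a $2$-bridge knot can have arbitrarily many irreducible one-dimensional components $Z_j$; distinct tuples $(Z_1,\dots,Z_n)$ project under $\pi$ to distinct products, and by Corollary~\ref{corollary:bending_generic} the fibre dimensions are controlled, so the resulting components of $X(\Gamma)$ are genuinely distinct.

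The main obstacle I anticipate is the dimension bookkeeping: establishing that the commuting condition cuts dimension by \emph{exactly} (or at least by no more than) the amount needed to retain dimension $\geq n-2$, and confirming that the characters produced by Lemma~\ref{lemma:rep non-par Gamma} actually lie in the top-dimensional part of $\mathcal{C}\cap X(\Gamma)$ rather than in some lower-dimensional locus. The cross-ratio computation in Lemma~\ref{lemma:cross-ratio} together with Remark~\ref{remark:2solutions_yfromcrossratio} is precisely what is needed to control this, but verifying that the two cross-ratio conditions impose independent constraints (so the drop is at most two and not more) — and that a generic point of the constructed component remains irreducible and escapes the hyperplane $\tau_\mu=\text{const}$ — is where the care must go. The sharper statement about the \emph{precise} dimension is deferred to Section~\ref{section:bound}, so here I would content myself with the inequality $\geq n-2$.
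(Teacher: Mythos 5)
Your proposal is correct and follows essentially the same route as the paper: build a component $\mathcal{C}\subset X(K')$ of dimension $\geq n$ by bending, use Lemma~\ref{lemma:rep non-par Gamma} (with Remark~\ref{remark:equalorinverse}) to see $\mathcal{C}\cap X(\Gamma)\neq\emptyset$, observe that membership in $X(\Gamma)$ imposes two conditions (the two fixed points of the axes must agree) so the dimension drops by at most two, note that $\tau_\mu$ remains non-constant since the trace is a free parameter unaffected by the commuting constraint, and invoke \cite{ORK} for arbitrarily many components. The paper's own proof is exactly this argument, stated more tersely.
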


\begin{proof}
We know that $X(K')$ contains irreducible components ${\mathcal C}$ of 
dimension  $n$ on which the trace of the meridian is non-constant. 
Lemma~\ref{lemma:rep non-par Gamma} ensures that the intersection 
${\mathcal C}\cap X(\Gamma)$ is non-empty. Since we need to impose two 
conditions to the points of the components of ${\mathcal C}$ for them to belong
to $X(\Gamma)$ we see that we obtain in $X(\Gamma)\subset X(K)$ components of
dimension at least $n-2$. Note that the construction shows that even on this
components the trace of the meridian is not constant.

For the last assertion, we use again that the number of irreducible
$1$-dimensional components of a $2$-bridge knot containing irreducible
characters can be arbitrarily large, according to \cite{ORK}.
\end{proof}

For later use, we discuss the space of solutions in Lemma~\ref{lemma:rep
non-par Gamma}. We start with the case $n=3$. Assume that 
$\rho=\rho_1*\rho_2*\rho_3\in R(\Gamma)$ is a representation so that $\rho_1$, 
$\rho_2$ and $\rho_3$ are irreducible. In particular $a=(a_1,a_2)=(Id,Id)$ is a 
solution. To find further solutions, let $r_i$ denote the $\pi$-rotation of 
$\mathbf H^3$ around the geodesic perpendicular to the axes of both  
$\rho_i(\mu_i)$ and $\rho_i(\mu_{i+1})$ (here $\rho_3(\mu_4)=\rho_1(\mu_1)$). 
Then 
$$
^{r_1}\rho_1*^{r_2}\rho_2*^{r_3}\rho_3
$$
is also a representation of $R(\Gamma)$, since $r_i$  conjugates
$\rho_i(\mu_i)$ and $\rho_i(\mu_{i+1})$ to their inverses.
As $r_i^2= Id$, 
$
^{r_1}\rho_1*^{r_2}\rho_2*^{r_3}\rho_3
$
is conjugate to 
$$
^{r_2r_1}\rho_1*\rho_2*^{r_2r_3}\rho_3.
$$
This representation corresponds to the second solution of 
Equation~\ref{eqn:yfromcrossratio} and is obtained as explained in 
Remark~\ref{remark:2solutions_yfromcrossratio}. Indeed both $r_1$ and $r_3$ 
permute the end-points of the axis of $\rho_1(\mu_1)=\rho_3(\mu_4)$, while 
$r_2$ is the rotation $r$ in Remark~\ref{remark:2solutions_yfromcrossratio}.
Since this representation is also conjugate to
$$
\rho_1*^{r_1r_2}\rho_2*^{r_1r_2r_2r_3}\rho_3,
$$
we have that $(a_1,a_2)= (r_1r_2,r_2r_3)$ is another solution different from 
the trivial one. It follows moreover from the discussion in
Remark~\ref{remark:2solutions_yfromcrossratio} that these are all solutions.

For larger $n$, there are more indeterminacies, but by the same argument we get
the following lemma:

\begin{Lemma}
\label{lemma:uniqueness non-par Gamma}
Assume that $a\rho$ satisfies Lemma~\ref{lemma:rep non-par Gamma} and the 
$a_1,\ldots,a_{n-3}$ are chosen generically,  so that the group generated by  
$a\rho(\mu_1)$ and $ a\rho(\mu_{n-1})$ is irreducible.
Let $r$, $r'$ and $r''$ be hyperbolic rotations of order two, so that the 
axis of $r$ is perpendicular to the axes of $a\rho(\mu_1)$ and 
$a\rho(\mu_{n-1})$, the axis of $r'$ to the axes of $a\rho(\mu_{n-1})$ and 
$a\rho(\mu_{n})$, and similarly for $r''$ and $a\rho(\mu_{n})$ and 
$a\rho(\mu_{n+1})= a\rho(\mu_{1})$. Once the $a_1,\ldots,a_{n-3}$ are fixed, 
the only other solution for the parameters $a_{n-2}$ and $a_{n-1}$ is 
$a_{n-2}'=r \, r'a_{n-2}$ and $a_{n-1}'=r' r'' a_{n-1}$.
\end{Lemma}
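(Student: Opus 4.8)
The plan is to reduce the general case to the three-block situation that is treated explicitly just before the statement, and then to read off the second solution from the symmetry recorded in Remark~\ref{remark:2solutions_yfromcrossratio}. First I would freeze $a_1,\ldots,a_{n-3}$ and regard the first $n-2$ bent tangles as a single block. Concretely, set $P_1=\rho_1*{}^{a_1}\rho_2*\cdots*{}^{a_1\cdots a_{n-3}}\rho_{n-2}$, a representation of the subgroup generated by $\mu_1,\ldots,\mu_{n-1}$ whose boundary meridians are $\mu_1$ and $\mu_{n-1}$, with $P_1(\mu_1)=a\rho(\mu_1)$ and $P_1(\mu_{n-1})=a\rho(\mu_{n-1})$ (the latter identity holding because $a_{n-2}$ centralises $\rho_{n-1}(\mu_{n-1})$). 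The genericity hypothesis says exactly that these two elements generate an irreducible subgroup, i.e. their axes are distinct and share no endpoint; this is precisely the property of an irreducible two-bridge block used throughout Lemma~\ref{lemma:rep non-par Gamma}. Together with $\rho_{n-1}$ (on $\mu_{n-1},\mu_n$) and $\rho_n$ (on $\mu_n,\mu_{n+1}=\mu_1$), the blocks $P_1,\rho_{n-1},\rho_n$ form a cyclic configuration identical to the case $n=3$, with the two remaining free parameters $a_{n-2},a_{n-1}$ and with the condition $a\rho\in R(\Gamma)$ reducing to the requirement that $a\rho(\mu_1)$ and $a\rho(\mu_{n+1})$ share an axis.

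Next I would apply the cross-ratio analysis verbatim. Writing $\{x,y\}$ for the endpoints of the common axis of $a\rho(\mu_1)=a\rho(\mu_{n+1})$, the constraints that $a_{n-2}$ lie in the centraliser of $a\rho(\mu_{n-1})$ and $a_{n-1}$ in that of $a\rho(\mu_n)$ translate into the two cross-ratio equations of Lemma~\ref{lemma:cross-ratio}, with $\{a,b\}$ the fixed points of $a\rho(\mu_{n-1})$ and $\{c,d\}$ those of $a\rho(\mu_n)$. By Remark~\ref{remark:2solutions_yfromcrossratio} these equations have \emph{exactly} two ordered solutions, $(x,y)$ and $(r'(y),r'(x))$, where $r'$ is the order-two rotation perpendicular to the axes of $a\rho(\mu_{n-1})$ and $a\rho(\mu_n)$; and since the relevant centralisers act simply transitively (again Lemma~\ref{lemma:cross-ratio}), each solution determines a unique pair $(a_{n-2},a_{n-1})$. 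This already yields exhaustiveness: the given $a$ together with at most one further solution.

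Finally I would identify that second solution explicitly, mimicking the computation for $n=3$. Conjugating $P_1$ by $r$, the block $\rho_{n-1}$ by $r'$, and the block $\rho_n$ by $r''$ sends each of $a\rho(\mu_1)$, $a\rho(\mu_{n-1})$, $a\rho(\mu_n)$ to its inverse on both sides of the corresponding gluing, and likewise at the commuting interface $\mu_1=\mu_{n+1}$ (using Remark~\ref{remark:equalorinverse}), so the resulting representation still lies in $R(\Gamma)$. Re-normalising it into bending form, so that $P_1$ is again un-conjugated, leaves $a_1,\ldots,a_{n-3}$ untouched and, exactly as in the displayed manipulation ${}^{r_2r_1}\rho_1*\rho_2*{}^{r_2r_3}\rho_3\sim\rho_1*{}^{r_1r_2}\rho_2*{}^{r_1r_3}\rho_3$, replaces $a_{n-2}$ by $r\,r'\,a_{n-2}$ and $a_{n-1}$ by $r'r''a_{n-1}$.

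The step I expect to be the main obstacle is this last bookkeeping: one must track the cumulative conjugation factors $a_1\cdots a_{n-2}$ and $a_1\cdots a_{n-1}$ carefully to confirm that, after re-normalising with $a_1,\ldots,a_{n-3}$ frozen, the three order-two rotations combine into precisely $r\,r'$ and $r'r''$ rather than some conjugate thereof, and to check that the rotations $r,r',r''$ — defined in terms of the bent elements $a\rho(\mu_i)$ — interact correctly with the frozen internal conjugations. Once this is verified, matching the outcome against the two solutions produced by Remark~\ref{remark:2solutions_yfromcrossratio} shows there are no others, which completes the proof.
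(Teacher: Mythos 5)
Your proposal is correct and follows essentially the same route as the paper: the paper's proof of this lemma is exactly its preceding $n=3$ discussion (the rotations $r_1,r_2,r_3$, exhaustiveness via Remark~\ref{remark:2solutions_yfromcrossratio}, and the conjugation manipulation giving $(r_1r_2,r_2r_3)$) together with the assertion that the general case follows ``by the same argument'', and your blocking of the first $n-2$ bent tangles into $P_1$ --- legitimised by the genericity hypothesis that $a\rho(\mu_1)$ and $a\rho(\mu_{n-1})$ generate an irreducible group --- is precisely how that reduction is meant to run. The bookkeeping concern you flag at the end is real but resolves as you expect once the parameters and the products $r\,r'a_{n-2}$, $r'r''a_{n-1}$ are read in the centralisers of the bent images $a\rho(\mu_{n-1})$ and $a\rho(\mu_n)$ (i.e.\ as compositions of bendings based at $a\rho$, the convention implicit in the paper's statement, since $r,r',r''$ are defined by the axes of $a\rho$).
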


\section{Bounding dimensions from above}
\label{section:bound}

We have seen that it is relatively easy to establish lower bounds on the
dimension of the non-standard components we constructed in the previous
sections. Determining their exact dimension, which turns out to coincide with
the lower bound, requires a finer analysis which will be carried out in this 
section. In Subsection~\ref{subsect:conv} we give a sufficient condition to 
guarantee convergence of characters in $X(\Gamma)$, once we know that the 
restrictions to the $2$-bridge factors converge. 
Subsection~\ref{subsect:nonparabolic} deals with the non-parabolic case, and Subsection~\ref{section:more_on_intersections} with the parabolic one.

\subsection{Convergence of characters and displacement function}
\label{subsect:conv}

The goal of this subsection is to prove Proposition~\ref{prop:convergence} 
about convergence of characters. Proposition~\ref{prop:convergence} admits an 
elementary proof when the limiting characters $\chi_i^\infty$ are non 
parabolic. This follows from Lemma~\ref{lemma:cross-ratio} and the continuity 
of the only two solutions of Equation~\ref{eqn:yfromcrossratio}. Since 
Equation~\ref{eqn:yfromcrossratio} does not apply to the parabolic case, we 
need a different argument when the limit is parabolic; in fact we are going to 
give an argument that holds in general. For this purpose, first we need to 
recall the definition of the displacement function of an isometry in hyperbolic 
space and its main properties.

\begin{Definition}
Let $h\in\operatorname{Isom}(\mathbf H^3)$ be an isometry. 
Its \emph{displacement function} is
$$
\begin{array}{rcl}
 d_h:\mathbf H^3 & \to & \mathbb R_{\geq 0} \\
  x & \mapsto & d_h(x)=d(x,\gamma(x))
\end{array}.
$$ 
\end{Definition}

\begin{Lemma}
\label{lemma:prs_disp}

 \begin{itemize}
  \item[(i)] For every isometry $h\in \operatorname{Isom}(\mathbf H^3)$, $d_h$ 
is convex.
  \item[(ii)] For every $h\in \operatorname{Isom}(\mathbf H^3)$ and 
$x,y\in\mathbf H^3$, 
		  $$
			\vert d_h(x)-d_h(y)\vert\leq 2 d(x,y).
		  $$
  \item[(iii)] Let $(h_k)_{k\in\mathbb N}\subset 
\operatorname{Isom}(\mathbf H^3)$ be a sequence of isometries. If $h_k$ 
converges then $(d_{h_k}(x))_{k\in\mathbb N}$ is bounded for every 
$x\in \mathbf H^3$.
  \item[(iv)] Let $(h_k)_{k\in\mathbb N}\subset 
\operatorname{Isom}(\mathbf H^3)$ be a sequence of isometries. If there is 
$x\in\mathbf H^3$ so that $(d_{h_k}(x))_{k\in\mathbb N}$ is bounded, then 
$(h_k)_{k\in\mathbb N}$ has a convergent subsequence.
 \end{itemize}
\end{Lemma}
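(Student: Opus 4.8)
The plan is to treat the four assertions in order, each resting on a standard geometric property of $\mathbf H^3$ as a Hadamard manifold, and to single out (iv) as the one point requiring care.

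For (i), I would exploit the convexity of the distance function in nonpositive curvature. Given any geodesic $c\colon\mathbb R\to\mathbf H^3$, the curve $h\circ c$ is again a geodesic parametrised at the same speed, because $h$ is an isometry. Thus $t\mapsto d_h(c(t))=d(c(t),h(c(t)))$ is the distance between two geodesics evaluated at the same parameter, which is a convex function of $t$ by the usual convexity of the distance between geodesics in $\mathbf H^3$. As this holds along every geodesic, $d_h$ is convex. For (ii), I would combine the triangle inequality with the fact that $h$ preserves distances:
$$
d_h(x)=d(x,h(x))\leq d(x,y)+d(y,h(y))+d(h(y),h(x))=2\,d(x,y)+d_h(y),
$$
using $d(h(y),h(x))=d(x,y)$. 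Interchanging $x$ and $y$ yields the two-sided bound, so $d_h$ is $2$-Lipschitz.

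For (iii), I would use that the action $\operatorname{Isom}(\mathbf H^3)\times\mathbf H^3\to\mathbf H^3$ is continuous, so that $h_k\to h$ forces $h_k(x)\to h(x)$ for each fixed $x$; hence $d_{h_k}(x)=d(x,h_k(x))\to d(x,h(x))$, a finite limit, and a convergent real sequence is bounded.

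The crux is (iv), where one must upgrade the boundedness of a single orbit to relative compactness of the isometries themselves; the obstacle is that boundedness of $d_{h_k}(x)$ controls only the location $h_k(x)$, not the differential of $h_k$ (its rotational part). I would resolve this by identifying $\operatorname{Isom}(\mathbf H^3)$ with the orthonormal frame bundle $F\mathbf H^3$ via $h\mapsto(h(x),dh_x(F))$ for a fixed base frame $F$ at a chosen point $x$; this map is a homeomorphism because the isometry group acts simply transitively on orthonormal frames. The hypothesis $d_{h_k}(x)\leq C$ confines $h_k(x)$ to the closed ball $\bar B(x,C)$, which is compact, and the restriction of the frame bundle to this ball is compact as well, its fibre $O(3)$ being compact. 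Therefore the sequence $(h_k(x),dh_k(F))$ lies in a compact subset of $F\mathbf H^3$ and admits a convergent subsequence, which under the homeomorphism corresponds to a convergent subsequence of $(h_k)_{k\in\mathbb N}$.
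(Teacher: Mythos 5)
Your proposal is correct and follows essentially the same route as the paper: convexity of the distance between geodesics for (i), the triangle inequality plus $d(h(x),h(y))=d(x,y)$ for (ii), continuity of the evaluation map for (iii), and for (iv) the fact that $h\mapsto h(x)$ exhibits $\operatorname{Isom}(\mathbf H^3)$ as a fibre bundle over $\mathbf H^3$ with compact fibre $O(3)$ — your frame-bundle identification is just a spelled-out version of that one-line argument. The only difference is the level of detail, which the paper leaves to the reader.
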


\begin{proof}
Assertion (i) is consequence of convexity of the distance function in 
hyperbolic space. Assertion (ii) is a straightforward application of the 
triangle inequality, and (iii) follows from continuity. Finally, (iv) follows 
from the fact that 
$$
\begin{array}{rcccl}
 O(3) & \to & \operatorname{Isom}(\mathbf H^3) & \to & \mathbf H^3 \\
      &     &           h                      & \mapsto & h(x)
\end{array}
$$
is a fibre bundle with compact fibre.
\end{proof}

The following corollary is based on Assertion (iv):

\begin{Corollary}
\label{cor:convergence} 
Let $G=\langle g_1,\ldots, g_s\mid r_i\rangle$ be a finitely generated group 
and $(\rho^k)_{k\in\mathbb N}\in R(G)$ a sequence of representations.
If there exists $x\in\mathbf H^3$ such that 
$
(\sum_{i=1}^s d_{\rho^k(g_i)}(x))_{k\in\mathbb N}
$  
is uniformly bounded, then $(\rho^k)_{k\in\mathbb N}$ has a convergent 
subsequence.
\end{Corollary}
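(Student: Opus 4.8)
The plan is to reduce the convergence of the sequence of representations $(\rho^k)_{k\in\mathbb N}$ to a single scalar estimate and then invoke Assertion (iv) of Lemma~\ref{lemma:prs_disp}. A representation $\rho^k\in R(G)$ is determined by the images of the generators $g_1,\ldots,g_s$, so $(\rho^k)$ converges (after passing to a subsequence) if and only if each sequence $(\rho^k(g_j))_{k\in\mathbb N}$ has a convergent subsequence in $\operatorname{Isom}(\mathbf H^3)$. The hypothesis gives us control on the sum $\sum_{j=1}^s d_{\rho^k(g_j)}(x)$ at the single point $x$, so first I would observe that since each displacement function takes values in $\mathbb R_{\geq 0}$, a uniform bound on the sum forces a uniform bound on each individual summand $d_{\rho^k(g_j)}(x)$.

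With this in hand, the strategy is to apply Assertion (iv) of Lemma~\ref{lemma:prs_disp} one generator at a time. For each fixed $j$, the sequence $(d_{\rho^k(g_j)}(x))_{k\in\mathbb N}$ is bounded by the previous remark, so by (iv) the sequence of isometries $(\rho^k(g_j))_{k\in\mathbb N}$ has a convergent subsequence. I would then extract convergent subsequences successively: first choose a subsequence along which $(\rho^k(g_1))$ converges, then pass to a further subsequence along which $(\rho^k(g_2))$ converges, and so on up to $g_s$. Since there are only finitely many generators, this diagonal-type extraction terminates after $s$ steps and yields a single subsequence along which every $(\rho^k(g_j))$ converges simultaneously.

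Finally I would note that convergence of the images of all generators implies convergence of the representations themselves in $R(G)$. The defining relations $r_i$ are continuous (indeed polynomial) in the matrix entries, so the limiting images $h_j=\lim \rho^k(g_j)$ automatically satisfy every relation $r_i$; hence they define a bona fide representation $\rho^\infty\in R(G)$, and the chosen subsequence of $(\rho^k)$ converges to it. The main (and only) subtlety is the bookkeeping in the successive subsequence extraction, but because $s$ is finite this is entirely routine and presents no real obstacle; the essential content is packaged in Assertion (iv).
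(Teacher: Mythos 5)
Your proof is correct and is precisely the argument the paper intends: the paper gives no explicit proof, merely noting that the corollary ``is based on Assertion (iv)'' of Lemma~\ref{lemma:prs_disp}, and your write-up (nonnegativity gives a bound on each summand, apply (iv) generator by generator, extract a common subsequence in finitely many steps, and note that the relations pass to the limit by continuity) is exactly the routine verification being left to the reader. No gaps; this matches the paper's approach.
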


\begin{Proposition}
\label{prop:convergence}
For $i=1,\ldots,n$, let $(\chi^k_i)_{k\in\mathbb N}$ be a sequence in 
$X(B(\beta_i/\alpha_i))$ converging to an irreducible character 
$\chi_i^\infty$. Assume that $\chi^k_1(\mu)=\cdots=\chi^k_n(\mu)\neq\pm 2$. 
Then there exist $\chi^k\in X(\Gamma)$ such that $\chi^k$ restricted to 
$\pi_1(B(\beta_i/\alpha_i))$ equals $\chi^k_i$ and $(\chi^k)_{k\in\mathbb N}$ 
converges up to a subsequence.
\end{Proposition}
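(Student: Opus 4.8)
The plan is to realize each $\chi^k_i$ by a concrete representation $\rho^k_i$ of $\pi_1(B(\beta_i/\alpha_i))$, to amalgamate them into a representation $\rho^k$ of $\pi_1(K')$ via bending (choosing the bending parameters so that the commutativity condition $a\rho(\mu_1)=a\rho(\mu_{n+1})$ of Lemma~\ref{lemma:rep non-par Gamma} holds, placing $\chi^k$ in $X(\Gamma)$), and then to apply the compactness criterion of Corollary~\ref{cor:convergence} to extract a convergent subsequence. The key realization is that convergence of the characters $\chi^k_i\to\chi^\infty_i$ together with irreducibility of the limits should force, after the bending is arranged, the displacement sums $\sum_i d_{\rho^k(g_i)}(x)$ to stay bounded for a fixed basepoint $x\in\mathbf H^3$, which is exactly the hypothesis needed to invoke Corollary~\ref{cor:convergence}.

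First I would fix, for each $i$, a sequence $\rho^k_i\in R(B(\beta_i/\alpha_i))$ with $\chi_{\rho^k_i}=\chi^k_i$; since $\chi^\infty_i$ is irreducible, for large $k$ each $\chi^k_i$ is irreducible, so by Lemma~\ref{lemma:varietycharacters} the fibre over $\chi^k_i$ is a single conjugacy orbit and I can choose the $\rho^k_i$ so that they themselves converge (up to conjugacy) to an irreducible $\rho^\infty_i$. Because the limiting representations $\rho^\infty_i$ are irreducible, the pairs of fixed points (axes) of $\rho^\infty_i(\mu_i)$ and $\rho^\infty_i(\mu_{i+1})$ on $\hat{\mathbb C}$ are disjoint, and the cross-ratios used in Lemma~\ref{lemma:cross-ratio} stay away from $\{0,1\}$; hence the two solutions of Equation~\ref{eqn:yfromcrossratio} depend continuously on the data, and the bending parameters $a^k=(a^k_1,\dots,a^k_{n-1})$ solving the commutativity condition can be chosen to converge. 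This produces $\rho^k=a^k(\rho^k_1*\cdots*\rho^k_n)\in R(\Gamma)$ with $\chi^k:=\chi_{\rho^k}\in X(\Gamma)$ restricting to $\chi^k_i$ on each factor.

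The main obstacle is to guarantee convergence of $\rho^k$ itself, and this is where Corollary~\ref{cor:convergence} does the work. Since each $\rho^k_i$ converges, the displacement functions $d_{\rho^k_i(g)}(x)$ are bounded for each generator $g$ of $\pi_1(B(\beta_i/\alpha_i))$ and each fixed $x$, by Lemma~\ref{lemma:prs_disp}(iii). The subtlety is that $\rho^k$ is obtained by conjugating the factors by the bending elements $a^k_j$, which move the basepoint; but because the $a^k_j$ converge, the conjugated generators $\,^{a^k_2\cdots a^k_j}\rho^k_j(g)$ still have displacement at a fixed $x$ bounded uniformly in $k$ (using Lemma~\ref{lemma:prs_disp}(ii) to control the effect of moving the basepoint by the bounded amount $d(x,(a^k_2\cdots a^k_j)^{-1}x)$). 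Since the $g_i$ generating $\Gamma$ are among these conjugated generators of the factors, the sum $\sum_i d_{\rho^k(g_i)}(x)$ is uniformly bounded, and Corollary~\ref{cor:convergence} yields a convergent subsequence of $(\rho^k)$, hence of $(\chi^k)$. The one delicate point to verify carefully is that the bending parameters can indeed be chosen to converge: this rests on the continuity statement for the solutions of Equation~\ref{eqn:yfromcrossratio} recorded in Remark~\ref{remark:2solutions_yfromcrossratio}, valid precisely because the limiting axes are pairwise distinct, which is where the irreducibility of $\chi^\infty_i$ is used in an essential way.
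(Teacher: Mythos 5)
Your overall scheme (realize the $\chi^k_i$ by convergent representations $\rho^k_i$, amalgamate via bending subject to the commutation condition, then extract convergence via Corollary~\ref{cor:convergence}) is the right skeleton, and your first step --- using irreducibility of $\chi^\infty_i$ and Lemma~\ref{lemma:varietycharacters} to choose the $\rho^k_i$ themselves convergent --- matches the paper. The gap is in your claim that the bending parameters can be chosen to converge. You argue that irreducibility of $\rho^\infty_i$ keeps the cross-ratios of Lemma~\ref{lemma:cross-ratio} away from $\{0,1\}$, so that the solutions of Equation~\ref{eqn:yfromcrossratio} vary continuously. But the hypothesis only forbids $\chi^k_i(\mu)=\pm 2$ along the sequence, not in the limit: $\chi^\infty_i$ may perfectly well be parabolic \emph{and} irreducible (such characters exist; they are the subject of Section~\ref{section:parabolic}). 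Irreducibility only gives that the fixed-point sets of $\rho^\infty_i(\mu_i)$ and $\rho^\infty_i(\mu_{i+1})$ are disjoint; it does not prevent either element from being parabolic. When $\rho^\infty_i(\mu_i)$ is parabolic its two fixed points coalesce, the relevant cross-ratio degenerates to $1$, the leading coefficient $\lambda_1(\lambda_2-1)$ of Equation~\ref{eqn:yfromcrossratio} tends to $0$, and the continuity you invoke breaks down: the bending elements $a^k_j$ can genuinely go to infinity (the paper warns of exactly this immediately after the statement of the proposition). Since your uniform bound on the displacement sums rests entirely on convergence of the $a^k_j$, the argument collapses in this case --- and this is precisely the case needed later, in Proposition~\ref{proposition:intersection}, where the limiting characters \emph{are} parabolic.

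The paper's proof is designed to avoid this. It never attempts to control the conjugating isometries $h_k,g_k$ of the amalgam. Instead, for a fixed $x\in\mathbf H^3$ it observes that each displacement function $d_{\rho^k(\mu_j)}$ is bounded, by a constant $C$ independent of $k$, at suitable translates of $x$ (namely $x$, $h_k^{-1}(x)$ and $g_k^{-1}(x)$), and then combines convexity of displacement functions (Lemma~\ref{lemma:prs_disp}(i)) with thinness of hyperbolic triangles to produce a point $y_k$, at distance at most $\log(2+\sqrt 3)$ from each edge of the triangle with those three vertices, at which \emph{all} the displacements are simultaneously bounded by $C+2\log(2+\sqrt 3)$. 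Conjugating $\rho^k$ so that $y_k$ becomes a fixed basepoint, Corollary~\ref{cor:convergence} applies; the case $n>3$ then follows by induction. Your cross-ratio argument is exactly the ``elementary proof when the limiting characters are non-parabolic'' that the paper explicitly mentions (and discards as insufficient) before giving its proof; to complete your proposal you would need to add an argument of this displacement-function type for parabolic limits, since restricting the statement to non-parabolic limits would make it useless for Proposition~\ref{proposition:intersection}.
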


Notice that even if the $\chi^k_i$ are characters of representations $\rho^k_i$
such that the sequences $( \rho^k_i)_{k\in\mathbb N}$ converge, the conjugating 
matrices in the amalgam  between $\rho^k_i$ and $\rho^k_{i+1}$ could go to 
infinity.

\begin{proof} 
Assume first that $n=3$. Let $\rho_i^k\in R(B(\beta_i/\alpha_i))$ be a
representation with character $\chi_i^k$, for $i=1,2,3$ and $k\in\mathbb N$.
Since $\chi_i^\infty$ is irreducible, there exists $\rho_i^\infty\in
R(B(\beta_i/\alpha_i))$, unique up to conjugacy, with character 
$\chi_i^\infty$. In particular, after conjugacy, we can assume that the 
sequence $(\rho_i^k)_{k\in\mathbb N}$ converges to $\rho_i^\infty$, 
for $i=1,2,3$.

Reasoning as in Lemma~\ref{lemma:rep non-par Gamma}, we see that we can find 
isometries $h_k,g_k\in \operatorname{Isom}^+(\mathbf H^3)$ for each 
$k\in\mathbb N$ such that the following three conditions are fulfilled:
\begin{eqnarray*}
h_k^{-1} \rho^k_1(\mu_2) h_k & = & \rho^k_2(\mu_2) \\
g_k^{-1} \rho^k_3(\mu_3) g_k & = & \rho^k_2(\mu_3) \\
 h_k^{-1} \rho^k_1(\mu_1) h_k & = &  g_k^{-1} \rho^k_3(\mu_4) g_k  .
\end{eqnarray*}
As a consequence, for each $k\in\mathbb N$ we are able to construct a 
representation $\rho^k$, with character $\chi^k$. We want to exploit 
Lemma~\ref{lemma:prs_disp} to prove that there is a $(\rho'^k)_{k\in\mathbb N}$ 
which converges up to a subsequence, where for each $k$ $\rho'^k$ is conjugate 
to $\rho^k$. In particular, $(\chi^k)_{k\in\mathbb N}$ converges up to a 
subsequence.

We fix a point $x\in\mathbf H^3$. According to Lemma~\ref{lemma:prs_disp}(iii) 
we have that the sequences 
$$
\begin{array}{l}
(d_{\rho^k(\mu_1)}(h_k^{-1}(x)) = d_{\rho_1^k(\mu_1)}(x))_{k\in\mathbb N} \\
(d_{\rho^k(\mu_2)}(h_k^{-1}(x)) = d_{\rho_1^k(\mu_2)}(x))_{k\in\mathbb N} \\
(d_{\rho^k(\mu_2)}(x) = d_{\rho_2^k(\mu_2)}(x)))_{k\in\mathbb N} \\
(d_{\rho^k(\mu_3)}(x) = d_{\rho_2^k(\mu_3)}(x)))_{k\in\mathbb N} \\
(d_{\rho^k(\mu_3)}(g_k^{-1}(x)) =  d_{\rho_3^k(\mu_3)}(x))_{k\in\mathbb N} \\
(d_{\rho^k(\mu_4)}(g_k^{-1}(x)) = d_{\rho_3^k(\mu_4)}(x))_{k\in\mathbb N}
\end{array}
$$
are bounded by some constant $C>0$.

We are looking for a sequence $(y_k)_{k\in\mathbb N}\subset \mathbf H^3$ such 
that $d_{\rho^k(\mu_i)}(y_k)$ is bounded above independently of $k$, 
for $i=1,2,3$. For each $k\in\mathbb N$, we consider the hyperbolic triangle 
with vertices $x$, $h_k^{-1}(x)$ and $g_k^{-1}(x)$. Thinness of hyperbolic 
triangles says that there is a point $y_k$ whose distance to each edge of this 
triangle is less than $\log(2+\sqrt 3)$. Let $y_k$ be such point. To prove the 
upper bound for $d_{\rho^k(\mu_1)}=d_{\rho^k(\mu_4)}$ on $y_k$, we notice that 
$d_{\rho^k(\mu_1)}$ is $\leq C$ on the segment between $h_k^{-1}(x)$ and 
$g_k^{-1}(x)$, by convexity. Thus, using Lemma~\ref{lemma:prs_disp}(ii),
$$
d_{\rho^k(\mu_1)}(y_k)\leq C+ 2\log(2+\sqrt 3)= C'.
$$
By a similar argument we bound $d_{\rho^k(\mu_2)}$ (using the segment between 
$h_k^{-1}(x)$ and $x$) and $d_{\rho^k(\mu_3)}$ (using the segment between $x$ 
and  $g_k^{-1}(x)$). Once we have that  $d_{\rho^k(\mu_i)}(y_k)\leq C'$, for 
each $k\in\mathbb N$ let $\rho'^k$ be the conjugate of $\rho^k$ by an isometry 
that maps $y_k$ to a fixed point $y_0$, so that 
$d_{\rho'^k(\mu_i)}(y_0)=d_{\rho^k(\mu_i)}(y_k) \leq C'$. By 
Corollary~\ref{cor:convergence}, $(\rho'^k)_{k\in\mathbb N}$ has a convergent 
subsequence.

For $n>3$ we proceed by induction on $n$. By the induction hypothesis, there is 
a convergent sequence of representations $(\phi^k)_{k\in\mathbb N}$ of the 
subgroup generated by the meridians $\mu_1,\ldots,\mu_n$ such that the 
restriction to $\langle \mu_i,\mu_{i+1}\rangle$ is conjugate to $\rho_i^k$ and
$\phi^k(\mu_1)=\phi^k(\mu_n)$. We only need its restriction $\varphi^k$ to 
$\langle \mu_2,\ldots,\mu_n \rangle$. Notice that the irreducibility of
$\rho_1^k$ implies that $\varphi^k(\mu_2)$ and $\varphi^k(\mu_n)=\phi^k(\mu_1)$ 
generate an irreducible representation. Now we can repeat the argument for 
$n=3$ applied to $\rho^k_1$, $\varphi^k$ and $\rho^k_n$.
\end{proof}

\subsection{The non-parabolic case}
\label{subsect:nonparabolic}

To bound dimensions of the components of $X(K)$ constructed in the previous 
section, we will use their intersection with the Teichm\"uller part (the
set of the characters of the basis of the Seifert fibred orbifold). Namely, in 
Lemma~\ref{lemma:interscettr0} we will prove that the components intersect the 
hyperplane $\tau_{\mu}=0$, and in Lemma~\ref{lemma:upper_bound} we will bound 
the dimension of the intersection with the Teichm\"uller part.

\begin{Lemma}
\label{lemma:interscettr0}
Let $Z\subset X(\Gamma)$ be an irreducible component as in 
Proposition~\ref{proposition:exoticcomponents}. If $\mu$ denotes a meridian, 
then $Z\cap \{\tau_{\mu}=0\}\neq\emptyset$.
\end{Lemma}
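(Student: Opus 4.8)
The plan is to realize a character with $\tau_\mu=0$ on $Z$ as a \emph{limit} of characters in $Z$ whose meridian trace tends to $0$, using the convergence machinery of Proposition~\ref{prop:convergence}. Since $Z$ is irreducible and $\tau_\mu$ is non-constant on it by Proposition~\ref{proposition:exoticcomponents}, the restriction $\tau_\mu\colon Z\to\mathbb C$ is dominant, hence its image is a cofinite subset of $\mathbb C$; a priori $0$ could be omitted, so a direct surjectivity statement is not enough. Instead I would choose a sequence $t^k\to 0$ with $t^k\neq\pm2$, each $t^k$ lying in the (cofinite) image of $\tau_\mu|_Z$, and pick $\chi^k\in Z$ with $\tau_\mu(\chi^k)=t^k$. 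The goal is then to show that, after passing to a subsequence, the $\chi^k$ converge to a character $\chi^\infty\in Z$ with $\tau_\mu(\chi^\infty)=0$.

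First I would control the restrictions. Writing $Z_i\subset X(B(\tfrac{\beta_i}{\alpha_i}))$ for the $1$-dimensional components with irreducible characters out of which $Z$ is built, the restriction map $\pi$ sends $Z$ into $Z_1\times\cdots\times Z_n$, so each $\chi_i^k:=\chi^k|_{\pi_1(B(\beta_i/\alpha_i))}$ lies on $Z_i$ and satisfies $\chi_i^k(\mu)=t^k$. Because $\tau_\mu\colon Z_i\to\mathbb C$ is proper by Proposition~\ref{remark:2-bridge} and the $t^k$ remain bounded, the sequence $(\chi_i^k)_k$ stays in a compact subset of $Z_i$ and therefore, up to a subsequence, converges to some $\chi_i^\infty\in Z_i$ with $\chi_i^\infty(\mu)=0$.

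The key point is that each limit $\chi_i^\infty$ is \emph{irreducible}. Since $Z_i$ contains irreducible characters, Lemma~\ref{lemma:rootsAlex} tells us that every reducible character on $Z_i$ has meridian trace of the form $\theta+1/\theta$ with $\theta^2$ a root of the Alexander polynomial of $B(\tfrac{\beta_i}{\alpha_i})$. A meridian trace equal to $0$ would force $\theta^2=-1$, i.e.\ that $-1$ be a root of the Alexander polynomial; but evaluating the Alexander polynomial at $-1$ yields, up to sign, the determinant of the knot, which equals $\alpha_i$ and is odd, hence nonzero. Thus no reducible character on $Z_i$ has meridian trace $0$, and $\chi_i^\infty$ must be irreducible. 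In particular $\chi_i^k$ is irreducible for large $k$, and the underlying representations converge after conjugation.

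With the $\chi_i^\infty$ irreducible and $t^k\neq\pm2$, I would then invoke the mechanism behind Proposition~\ref{prop:convergence}: the displacement-function and thin-triangle argument (Lemma~\ref{lemma:prs_disp} together with Corollary~\ref{cor:convergence}) applies to any representation of $\Gamma$ whose restrictions to the $2$-bridge factors converge, producing a single global conjugation after which the whole representation converges. Applied to the $\rho^k$ underlying our $\chi^k\in Z$, this shows that $(\chi^k)_k$ converges up to a subsequence to a character $\chi^\infty\in X(\Gamma)$ restricting to $\chi_i^\infty$ on each factor, so $\tau_\mu(\chi^\infty)=0$; and since $Z$ is closed (it is an irreducible component) and each $\chi^k\in Z$, we get $\chi^\infty\in Z\cap\{\tau_\mu=0\}$, as desired. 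I expect the main obstacle to be precisely this last step: one must check that the convergence argument of Proposition~\ref{prop:convergence} does not depend on the particular way the $2$-bridge factors are amalgamated inside $Z$, but only on the convergence of the restrictions and the irreducibility of their limits. That robustness, together with the Alexander-polynomial computation guaranteeing irreducibility of the limits at meridian trace $0$, is the crux of the proof.
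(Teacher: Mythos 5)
Your first two steps match the paper's proof: you use properness of $\tau_\mu$ on the curves $Z_i$ (Proposition~\ref{remark:2-bridge}) to produce restrictions converging to characters $\chi_i^\infty$ with $\chi_i^\infty(\mu)=0$, and the Alexander-polynomial argument at $-1$ via Lemma~\ref{lemma:rootsAlex} to see these limits are irreducible; your extra remark that the Alexander polynomial at $-1$ equals $\pm\alpha_i\neq 0$ is a correct completion of the parenthetical the paper leaves implicit. The gap is in your last step. You claim the displacement-function argument ``applies to any representation of $\Gamma$ whose restrictions to the $2$-bridge factors converge,'' and you apply it to arbitrarily chosen $\chi^k\in Z$ with $\tau_\mu(\chi^k)=t^k$. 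That claim is false, and the paper warns against exactly this right after Proposition~\ref{prop:convergence}: even when the restrictions $\rho_i^k$ converge, the conjugating matrices in the amalgam can go to infinity. Concretely, for $n>3$ the fibre of $\pi$ over $(\chi_1^k,\ldots,\chi_n^k)$ inside $X(\Gamma)$ is positive-dimensional (the bending parameters $a_1,\ldots,a_{n-3}$ are free), hence non-compact, and nothing prevents your arbitrary picks $\chi^k$ from escaping to infinity along these fibres, in which case no subsequence converges. Proposition~\ref{prop:convergence} is an existence statement: it produces \emph{some} convergent sequence of lifts of the restrictions, not convergence of every sequence of lifts.

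This leaves exactly the problem you flag at the end but do not solve: the convergent lifts produced by Proposition~\ref{prop:convergence} live in $X(\Gamma)$, and one must still place them (hence their limit) in the given component $Z$. The paper resolves this not through any ``robustness'' of the convergence mechanism, but through the structure of the fibres: by Lemma~\ref{lemma:uniqueness non-par Gamma}, once $a_1,\ldots,a_{n-3}$ are chosen generically there are exactly two solutions for $(a_{n-2},a_{n-1})$, related by explicit rotations about axes perpendicular to the meridian axes. Hence the relevant fibres of $\pi$ have at most two irreducible components; the convergent sequence may be assumed to lie in one of them, and if that component is not $Z$, the rotation construction yields a new sequence lying in $Z$ which still converges, by continuity of the rotations in the data. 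Without this (or an equivalent) argument, your proof establishes only $X(\Gamma)\cap\{\tau_\mu=0\}\neq\emptyset$, not $Z\cap\{\tau_\mu=0\}\neq\emptyset$, which is the whole point of the lemma.
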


\begin{proof}
By construction, the restriction $Z\to X(B(\frac{\beta_i}{\alpha_i}))$ is 
non-constant, for each $i=1,\ldots,n$. In particular the Zariski closure of the 
image of $Z$ in $X(B(\frac{\beta_i}{\alpha_i}))$ is a curve 
$Z_i\subset X(B(\frac{\beta_i}{\alpha_i}))$. Consider the algebraic set
$$
\{(\chi_1,\ldots,\chi_n)\in Z_1\times\cdots\times Z_n
\mid \chi_1(\mu_1)=\cdots =
\chi_n(\mu_n)\}
$$
and the projection induced by taking restrictions:
$$
\pi: Z\to    
 \{(\chi_1,\ldots,\chi_n)\in Z_1\times\cdots\times Z_n
\mid \chi_1(\mu_1)=\cdots =
\chi_n(\mu_n)\}.
$$
By Proposition~\ref{remark:2-bridge}, the closure of the image 
$\overline{\pi(Z)}$ is a curve and contains a point 
$(\chi_1,\ldots,\chi_n)\in \overline{\pi(Z)}$ with $\tau_{\mu}(\chi_i)=0$. 
Since $-1$ is not a root of the Alexander polynomial, $\chi_i$ is irreducible
(a matrix with determinant $1$ and trace $0$ has eigenvalues $\pm i$, hence 
irreducibility comes from Lemma~\ref{lemma:rootsAlex}).

We take $n$ sequences of characters  
$(\chi_i^k)_{k\in\mathbb N}\subset X(B(\frac{\beta_i}{\alpha_i}))$ so that 
$\chi_i^k\to\chi_i$ as $k\to\infty$ and 
$(\chi_{\rho_1}^k,\ldots,\chi_{\rho_n}^k)\in{\pi(Z)}$. Since $\chi_i$ is 
irreducible, we may apply Proposition~\ref{prop:convergence} to conclude that
there is a convergent sequence of characters $(\chi^k)_{k\in\mathbb N}\subset 
X(\Gamma)$, $\chi^k\to\chi$, so that 
$\pi(\chi^ k)=(\chi_1^k,\ldots,\chi_n ^k)$. In particular 
$\pi(\chi)=(\chi_1,\ldots,\chi_n)$. We want to show that $\chi^k$ may be chosen 
to belong to the irreducible component $Z$. Notice that the $\chi^k$ are 
defined using the construction of Lemma~\ref{lemma:rep non-par Gamma}, 
$\chi^k=\chi_{\rho^k}$ where $\rho^k$ is the amalgam of 
${\rho_1}^k,\ldots,{\rho_n}^k$ with conjugating matrices $a_1,\ldots,a_{n-1}$. 
The $a_1,\ldots,a_{n-3}$ can be perturbed in an open (hence Zariski dense) 
subset of $A_1,\ldots,A_{n-3}$, and once those are chosen then $a_{n-2}$ and 
$a_{n-1}$ are subject to a compatibility condition. We have seen in 
Lemma~\ref{lemma:uniqueness non-par Gamma} that there are two solutions for 
$(a_{n-2},a_{n-1})$ related by rotations around axes perpendicular to the axes 
of the meridians. Notice that this construction gives at most two irreducible 
components for the fibres of $\pi^{-1}(\chi_1^k,\ldots,\chi_n^k)$.
We can assume that the sequence $(\chi^k)$ is contained in one of these two 
components. If this component is $Z$ we are done, else using the construction 
we just recalled we can find a new sequence contained in $Z$ and which 
converges by continuity (Lemma~\ref{lemma:uniqueness non-par Gamma}). Therefore 
$\chi\in Z\cap\{\tau_{\mu}=0\}$.
\end{proof}

In the previous proof we used that the Zariski closure of the image of 
$Z\subset X(\Gamma)$ in $X(B(\frac{\beta_i}{\alpha_i}))$ is a curve $Z_i\subset 
X(B(\frac{\beta_i}{\alpha_i}))$. Let $\ell$ be the number of $Z_i$s that 
consist of abelian representations. Reasoning as in Remark~\ref{remark:small 
par}, we can prove that $\ell\neq n-1$. In addition it seems unlikely that 
$\ell=n-2$ occurs for any choice of $2$-bridge knots. This case will indeed 
occur sometimes, for example for the sum of two copies of the same knot, or 
combining this with surjections between $2$-bridge knot groups (see 
Section~\ref{section:Mattman}). When $\ell=n$ then all representations of $Z$ 
are abelian.

\begin{Lemma}
\label{lemma:upper_bound}
Let $Z$ be a component of $X(\Gamma)$ contained in a component $V$ of $X(K)$ as 
in Proposition~\ref{proposition:exoticcomponents}. Let $\ell$ be the number of 
$Z_i$ that consist of abelian characters, as above. If $\ell\leq n-3$, then  
$\dim V\leq n-2-\ell$. If $\ell=n$ or $\ell=n-2$, then $\dim V \leq 1$. 
Moreover $V=Z$.
\end{Lemma}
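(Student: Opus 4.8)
The plan is to bound $\dim V$ from above by intersecting with the hyperplane $\{\tau_\mu=0\}$, which throws the slice into the Teichm\"uller locus, and then to match this against the lower bound already available for $Z$. First I would note that $\tau_\mu$ is non-constant on $V$: it is non-constant on $Z$ by Proposition~\ref{proposition:exoticcomponents}, and $Z\subseteq V$. Hence $\{\tau_\mu=0\}$ cuts out a proper, non-empty hypersurface in the irreducible variety $V$ --- non-empty because it already meets $Z$ by Lemma~\ref{lemma:interscettr0} --- so every component of $V\cap\{\tau_\mu=0\}$ has codimension one and $\dim V=1+\dim\bigl(V\cap\{\tau_\mu=0\}\bigr)$. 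It therefore suffices to show $\dim\bigl(V\cap\{\tau_\mu=0\}\bigr)\le n-3-\ell$ when $\ell\le n-3$, and $\le 0$ when $\ell\in\{n-2,n\}$.

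Next I would identify where this slice lives. A character with $\tau_\mu=0$ sends the meridian to an element of order two, so it is the character of a representation of the orbifold $\mathcal O$ with underlying space $S^3$ and singular locus $K$ of cone angle $\pi$. Since $K$ is Montesinos, $\mathcal O$ is Seifert fibred over a $2$-orbifold $\mathcal O^2$, and an irreducible such character factors through $\pi_1^{\mathrm{orb}}(\mathcal O^2)$ because the regular fibre is central, hence trivial in $PSL_2(\mathbb C)$. Thus $V\cap\{\tau_\mu=0\}$ is contained in $X(\mathcal O^2,PSL_2(\mathbb C))$, whose dimension is given by Goldman's formula $-3e+2b+c$ of Proposition~\ref{proposition:dimsurface}; for $\ell=0$ this is the bound $n-2$ quoted in the introduction.

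The core of the argument is to feed $\ell$ into this count and to gain the extra codimension. For each of the $\ell$ indices with $Z_i$ abelian, the corresponding tangle carries a reducible representation even in the limit $\tau_\mu=0$; reasoning as in Remarks~\ref{remark:small components} and \ref{remark:small par}, such a tangle can be deleted, removing one cone point from $\mathcal O^2$ and lowering $-3e+2b+c$ by one, so the ambient Teichm\"uller locus has dimension at most $n-2-\ell$. The remaining drop by one comes from the fact that $V$ is \emph{not} contained in $\{\tau_\mu=0\}$: as $\tau_\mu$ is non-constant on $V$, its trace-zero slice is a proper subvariety of the top-dimensional Teichm\"uller stratum, whence $\dim\bigl(V\cap\{\tau_\mu=0\}\bigr)\le n-3-\ell$ and $\dim V\le n-2-\ell$. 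Combined with the lower bound $\dim Z\ge n-2-\ell$ furnished by the bending construction (Corollary~\ref{corollary:bending_generic} and Lemma~\ref{lemma:uniqueness non-par Gamma}), this gives $\dim Z=\dim V$; as $Z$ is an irreducible closed subset of $X(K)$ of the same dimension as the irreducible component $V\supseteq Z$, we conclude $V=Z$.

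Finally I would dispose of the degenerate cases where the count collapses. If $\ell=n$ every factor is abelian, $V$ contains the abelian curve, and the slice reduces to the single trace-zero abelian character, so $\dim V\le 1$; if $\ell=n-2$ only two tangles remain active and the reduced base orbifold is too simple to support a positive-dimensional family of trace-zero characters, again giving $\dim V\le1$. In both cases the free meridian trace yields $\dim Z\ge1$, so $V=Z$ as before. I expect the decisive obstacle to be the third paragraph: proving that deleting an abelian factor lowers Goldman's count by \emph{exactly} one, and that the trace-zero slice of $V$ is genuinely codimension one inside the Teichm\"uller stratum rather than an entire component --- everything else is the formal hypersurface-section and dimension-comparison bookkeeping.
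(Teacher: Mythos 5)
Your reduction to the trace-zero slice and the surrounding bookkeeping follow the paper's strategy, but the step you yourself flag as the decisive obstacle --- the ``remaining drop by one'' --- is a genuine gap, and the justification you offer for it is a non sequitur. From the non-constancy of $\tau_\mu$ on $V$ you conclude that $V\cap\{\tau_\mu=0\}$ is a \emph{proper} subvariety of a top-dimensional Teichm\"uller stratum, hence of strictly smaller dimension. This does not follow: a priori $V\cap\{\tau_\mu=0\}$ could be an \emph{entire} top-dimensional component of (the pull-back of) $X(P^2,PSL_2(\mathbb C))$, of dimension $n-2-\ell$, sitting inside the strictly larger irreducible variety $V$; that is perfectly compatible with $\tau_\mu$ being non-constant on $V$, and it would only give $\dim V\le n-1-\ell$. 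To exclude it along your lines you would need to know that the top-dimensional Teichm\"uller strata are themselves irreducible components of $X(K)$ (so that they cannot be properly contained in the component $V$); the paper asserts this only as background in the introduction, never proves it, and does not use it in its proof of this lemma.

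The paper instead extracts the missing codimension from the defining condition of $X(\Gamma)$, which your argument never uses at this step. A character in $Z\cap\{\tau_\mu=0\}$ comes from a representation in which the axes of $\rho(\mu_1)$ and $\rho(\mu_{n+1})$ coincide, so the two rotations stabilising the edges of the Coxeter polygon $P^2$ adjacent to the $(n+1)$-st corner have the same image: the dihedral group of order $2\alpha_{n+1}$ at that corner is mapped onto a group of order two. Since this dihedral group is \emph{finite}, the isomorphism type of its image cannot jump along the irreducible component $W$ of $X(P^2,PSL_2(\mathbb C))$ containing these characters, so every character of $W$ factors through the Coxeter polygon $P'$ with that corner erased; Proposition~\ref{proposition:dimsurface} then gives $\dim W\le n-3$, and the same corner-killing mechanism applied to the $\ell$ abelian factors (note these are \emph{corners}, each worth $1$ in $-3e+2b+c$, not cone points, which would be worth $2$) yields $\dim W\le n-3-\ell$, whence $\dim V\le n-2-\ell$. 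Two smaller omissions: your appeal to Goldman's formula presupposes that the representations in the slice are irreducible and hence kill the Seifert fibre, which the paper secures via Lemma~\ref{lemma:rootsAlex} (trace $0$ gives eigenvalues $\pm i$ and $-1$ is not a root of the Alexander polynomial), together with Lemma~\ref{lemma:interscettr0} to know the slice meets $Z$ at all; your handling of the cases $\ell=n-2,n$ and the final deduction $V=Z$ by dimension comparison are fine once the main bound is in place.
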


\begin{proof}
Embed $X(K)$ in $\mathbb C^N$ with coordinates some trace functions, according 
to Proposition~\ref{proposition:X(G)}. One of these coordinates is chosen to be 
the trace of the meridian $\mu$. By Lemma~\ref{lemma:interscettr0}, $Z$ and $V$ 
intersect the hyperplane defined by trace of the meridian equal to zero, 
therefore, since $\{\tau_\mu=0\}$ has codimension $1$ in the ambient space,
$$
\dim (V\cap \{\tau_\mu=0\})\geq \dim V-1.
$$
Any matrix in $SL_2(\mathbb C)$ with zero trace has order two in 
$PSL_2(\mathbb C)$, i.e.\ it is a rotation of angle $\pi$ in hyperbolic space,
hence every representation contained in $V\cap\{\tau_{\mu}=0\}$
factors to a representation of  $\pi_1(\mathcal O_2$ into $PSL_2(\mathbb C)$,
where $\mathcal O_2$ is the three-dimensional orbifold with underlying space 
$S^3$, branching locus $K$ and ramification index $2$. Since $K$ is a 
Montesinos knot, $\mathcal O_2$ is Seifert fibred, with basis a Coxeter 
$2$-orbifold $P^2$ on a polygon with $n+1$ vertices (one for each rational 
tangle). The representations of $Z\cap \{\tau_{\mu}=0\}$ are irreducible by 
Lemma~\ref{lemma:rootsAlex}. Hence the representations corresponding to points
of a Zariski open nonempty subset of $V\cap \{\tau_{\mu}=0\}$ (containing 
$Z\cap \{\tau_{\mu}=0\}$) are also irreducible and thus they map the fibre to 
the identity. It follows that the each component of $V\cap \{\tau_{\mu}=0\}$ 
that meets $Z$ is finite-to-one to a subvariety $W$ of 
$X(P^2, PSL_2(\mathbb C))$, and 
$$
\dim V -1\leq \dim (V\cap \{\tau_{\mu}=0\})= \dim W.
$$

Assume first that $\ell=0$. We claim that $\dim W\leq n-3$ for the components 
of $X(P^2, PSL_2(\mathbb C))$ that contain characters induced by characters of 
$Z\cap \{\tau_\mu=0\}$. The corners of $P^2$ correspond to the tangles of $K$, 
and the stabiliser of each of these corners is a dihedral group. In particular, 
the $n+1$st corner is a dihedral group of order $2\alpha_{n+1}$. The 
stabilisers of the adjacent edges are order two groups, generated by 
reflections of the plane, that in $PSL_2(\mathbb C)$ are mapped to rotations. 
Thus, the meridians of the arcs adjacent to the $n+1$st tangle are mapped to 
rotations whose axes form an angle which is an integer multiple of 
$\pi/\alpha_{n+1}$. In particular this angle is constant on the irreducible 
component $W$ of $X(P^2, PSL_2(\mathbb C))$.

\begin{figure}[h]
\begin{center}
 {
 \psfrag{p1}{$\pi/\alpha_1$}
 \psfrag{p2}{$\pi/\alpha_2$}
 \psfrag{p3}{$\pi/\alpha_3$}
 \psfrag{p4}{$\pi/\alpha_4$}
 \psfrag{p5}{$\pi/\alpha_5$}
  \includegraphics[height=3cm]{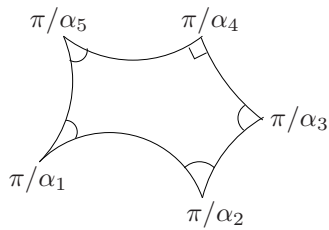}
 }
\end{center}
\caption{A Coxeter orbifold $P^2$ for $n+1=5$}
\label{fig:polygon}
\end{figure}

Now chose $W$ a component of $X(P^2, PSL_2(\mathbb C))$ that contains 
characters induced by characters in $Z\cap \{\tau_\mu=0\}$. For any 
representation $\rho\in R(P^2,PSL_2(\mathbb C) )$ coming from 
$Z\subset X(\Gamma)$, since the axes in $\mathbf H^3$ of $\rho(\mu_1)$ and 
$\rho(\mu_{n+1})$ are assumed to be the same for $\chi_\rho\in X(\Gamma)$, the 
axes of the rotations that stabilise the edges adjacent to the $n+1$st vertex 
of $P^2$ coincide and so the generators of the stabilisers are mapped to the 
same element. Therefore the dihedral stabiliser of the $n+1$st vertex is mapped 
to a group of order two. This holds true for the whole component $W$, because 
this dihedral group is finite. Thus the characters of $W$ factor through 
characters of $P'$, the Coxeter orbifold obtained by forgetting the last vertex 
of $P^2$, and $\dim W\leq \dim X(P',PSL_2(\mathbb C))$. Since $P'$ has $n$ 
vertices, $\dim X(P',PSL_2(\mathbb C))=n-3$, by 
Proposition~\ref{proposition:dimsurface}. As $\dim W\leq n-3$, 
$\dim Z\leq \dim V\leq \dim W+ 1=n-2$, and we are done when $\ell=0$.

Assume next that $\ell \leq n-3$. Then one can apply the same argument to the 
$\ell$ vertices corresponding to the $2$-bridge factors whose representations 
are abelian, and therefore we can still remove $\ell$ vertices to $P'$, to get 
the desired estimate of the dimension. The critical case occurs when 
$\ell=n-2$, the resulting $P'$ would just be a segment, and in this case the 
dimension is still zero. Finally, in the abelian case $\ell=n$, the component 
is a curve, for the abelianisation of a knot is cyclic.
\end{proof}

Using Lemma~\ref{lemma:upper_bound} and 
Proposition~\ref{proposition:exoticcomponents}, we can prove the following 
theorem. Notice that in Proposition~\ref{proposition:exoticcomponents} we found 
a lower bound for the dimension assuming that the restriction to every 
$2$-bridge factor contained irreducible representations, but a similar argument 
applies to bound the dimension when there are some abelian ones (see 
Remark~\ref{remark:small components} and 
Corollary~\ref{corollary:bending_generic}). In addition, we use \cite{ORK} to 
find arbitrarily many components.

\begin{Theorem}
\label{theorem:non parabolic}
Let $K$ be a Montesinos knot of Kinoshita-Terasaka type with $n+1$ tangles,
$n\ge3$. Then $X(K)$ contains components of dimension $d$, for $d=1,\ldots,n-2$ 
on which the trace of the meridian is non-constant, and which are entirely 
contained in $X(\Gamma)$. Moreover the number of such components is arbitrarily 
large.
\end{Theorem}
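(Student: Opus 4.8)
The plan is to realise each intermediate dimension $d$ by prescribing how many of the $n$ two-bridge summands carry abelian rather than irreducible characters, and then to pin the dimension down by squeezing the bending lower bound against the upper bound of Lemma~\ref{lemma:upper_bound}. Fix $d$ with $1\le d\le n-2$ and set $\ell=n-2-d$, so that $0\le\ell\le n-3$. On $\ell$ of the summands $B(\beta_i/\alpha_i)$ I would take abelian (generic reducible) characters, lying on the reducible curve $\mathcal C_0$ of Proposition~\ref{remark:2-bridge}; on the remaining $s=n-\ell$ summands I would take irreducible characters on $1$-dimensional components $Z_i$ that contain irreducible characters. Matching the meridian traces across all summands, and choosing this common value $\tau\neq\pm2$ generic (avoiding $\pm2$ and the finitely many values $\theta+1/\theta$ with $\theta^2$ a root of an Alexander polynomial), leaves $\tau$ as a single free parameter and makes every reducible factor generic reducible.

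For the lower bound I would run the bending construction of Proposition~\ref{proposition:exoticcomponents} in this mixed regime, which is governed by Corollary~\ref{corollary:bending_generic} together with Remark~\ref{remark:small components}: collapsing the abelian tangles, one is left with the composite knot on the $s=d+2\ge3$ irreducible summands. The fibre of the restriction map $\pi$ then has dimension $s-1=n-\ell-1$, and the free trace $\tau$ adds one more, so the resulting component $\mathcal C\subset X(K')$ has dimension $n-\ell$. Imposing the two conditions that force $\rho(\mu_1)$ and $\rho(\mu_{n+1})$ to share an axis---available here because $s\ge3$, by the mixed-case form of Lemma~\ref{lemma:rep non-par Gamma}---drops the dimension by at most two, producing a component $Z\subset X(\Gamma)$ of dimension $\ge n-\ell-2=d$ along which $\tau$ still varies, so that the trace of the meridian is non-constant on $Z$.

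For the matching upper bound I would invoke Lemma~\ref{lemma:upper_bound}. Since $\ell\le n-3$, any component $V$ of $X(K)$ containing such a $Z$ satisfies $\dim V\le n-2-\ell=d$ and, crucially, $V=Z$. Combined with the lower bound this forces $\dim V=\dim Z=d$, and the identity $V=Z$ is precisely the statement that the component is entirely contained in $X(\Gamma)$. The reason the two estimates meet for \emph{every} target $d$ is that $\ell=n-2-d$ sweeps out exactly the interval $0\le\ell\le n-3$ on which Lemma~\ref{lemma:upper_bound} supplies the sharp bound $n-2-\ell$.

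For the final assertion I would appeal to \cite{ORK}: the two-bridge summands can be chosen so that each of the $s$ irreducible factors has arbitrarily many $1$-dimensional components carrying irreducible characters. Because the Zariski closure of the image of $Z$ in $X(B(\beta_i/\alpha_i))$ recovers the chosen curve $Z_i$, distinct tuples $(Z_1,\dots,Z_n)$ yield distinct components $Z$, so their number can be made as large as desired. The step I expect to demand the most care is the adaptation of the lower-bound construction to the partly abelian setting: one must check that each abelian factor genuinely lowers the fibre dimension by one and that the two axis conditions remain independent and solvable after the collapse, which is exactly what Corollary~\ref{corollary:bending_generic} and the genericity of $\tau$ are designed to guarantee.
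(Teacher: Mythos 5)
Your proposal is correct and follows essentially the same route as the paper: the paper proves this theorem by combining the bending lower bound of Proposition~\ref{proposition:exoticcomponents}, adapted to allow $\ell=n-2-d$ abelian factors via Remark~\ref{remark:small components} and Corollary~\ref{corollary:bending_generic}, with the upper bound and the equality $V=Z$ supplied by Lemma~\ref{lemma:upper_bound}, and invokes \cite{ORK} for the arbitrarily large number of components. Your write-up simply makes explicit the details the paper leaves implicit (the choice $\ell=n-2-d$, genericity of the common trace $\tau$, and the requirement $s\ge 3$ so that Lemma~\ref{lemma:rep non-par Gamma} applies after collapsing the abelian tangles).
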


\subsection{More on intersections and the parabolic case}
\label{section:more_on_intersections}

This proposition describes how the different components we have constructed 
meet each other.

\begin{Proposition}
\label{proposition:intersection}
For all $i=1,\dots,n$, let $\rho_i$ be a parabolic representation
of the $2$-bridge knot $B(\frac{\beta_i}{\alpha_i})$ with character $\chi_i$.
For each $i$, let $Z_i$ be an irreducible component of
$X(B(\frac{\beta_i}{\alpha_i}))$ containing $\chi_i$. Denote by $Z$ the
irreducible component of $X(K)$ contained in $X(\Gamma)$ and constructed from
the $Z_i$s as in Proposition~\ref{proposition:exoticcomponents}. Let $Y$ the
parabolic component constructed from the $\rho_i$s as in
Theorem~\ref{theorem:parabolic}. One has $Z\cap Y\neq\emptyset$.
\end{Proposition}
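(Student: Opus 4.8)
The plan is to exhibit a common point of $Z$ and $Y$ as a \emph{parabolic limit} of characters lying on $Z$. The component $Y$ is built, as in Theorem~\ref{theorem:parabolic}, by bending the parabolic representations $\rho_1,\dots,\rho_n$; since the parabolic characters of a $2$-bridge knot form a finite set (Theorem~\ref{theorem:parabolic} and \cite{ORK}), the base point $(\chi_1,\dots,\chi_n)$ is an isolated point of $X(B(\tfrac{\beta_1}{\alpha_1}))\times\cdots\times X(B(\tfrac{\beta_n}{\alpha_n}))$, so $Y$ is (the relevant component of) the fibre $\pi^{-1}(\chi_1,\dots,\chi_n)\cap X(\Gamma)$, of dimension $n-2$ by Corollary~\ref{corollary:bending_par}. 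The component $Z$, by contrast, projects \emph{dominantly} onto each $Z_i$, and each $\chi_i$ is an irreducible parabolic character lying on $Z_i$ (irreducible because $\rho_i$ is, whence $Z_i$ is one of the non-abelian curves $\mathcal C_{j>0}$ of Proposition~\ref{remark:2-bridge}). The idea is to approach the parabolic base point from within $Z$ along non-parabolic characters and to show that the limit lands in $Y$.

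Concretely, I would first use that $\tau_\mu\colon Z_i\to\mathbb C$ is proper and non-constant (Proposition~\ref{remark:2-bridge}) to choose a sequence of values $\tau_k\to\pm2$ with $\tau_k\neq\pm2$, avoiding the finitely many traces forbidden by Lemma~\ref{lemma:rootsAlex}, together with characters $\chi_i^k\in Z_i$ satisfying $\tau_\mu(\chi_i^k)=\tau_k$ and $\chi_i^k\to\chi_i$; such $\chi_i^k$ exist by local surjectivity of the non-constant map $\tau_\mu$ near $\chi_i$, and are irreducible. Applying the bending construction of Lemma~\ref{lemma:rep non-par Gamma} over each base point $(\chi_1^k,\dots,\chi_n^k)$, I obtain characters $\chi^k\in X(\Gamma)$ with $\pi(\chi^k)=(\chi_1^k,\dots,\chi_n^k)$; by Lemma~\ref{lemma:uniqueness non-par Gamma} there are generically two bending solutions, and exactly as in the proof of Lemma~\ref{lemma:interscettr0} I can select the $\chi^k$ so that the whole sequence lies in the single component $Z$.

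Since each limiting character $\chi_i^\infty=\chi_i$ is irreducible and $\chi_i^k(\mu)=\tau_k\neq\pm2$, Proposition~\ref{prop:convergence} provides a convergent subsequence $\chi^k\to\chi^\infty\in X(\Gamma)$ with $\pi(\chi^\infty)=(\chi_1,\dots,\chi_n)$. As $Z$ is Zariski closed and contains every $\chi^k$, the limit satisfies $\chi^\infty\in Z$. It remains to see that $\chi^\infty\in Y$. The commuting condition $\chi([\rho(\mu_1),\rho(\mu_{n+1})])=2$ cutting out $X(\Gamma)$ is closed, hence persists in the limit, so $\chi^\infty$ is a parabolic character of $\Gamma$ lying over the isolated base point $(\chi_1,\dots,\chi_n)$, i.e.\ a point of $\pi^{-1}(\chi_1,\dots,\chi_n)\cap X(\Gamma)$, which is exactly the locus producing $Y$.

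The delicate point I expect to be the main obstacle is this last identification: I must rule out that the limit degenerates (to a reducible character, or to a point outside $Y$) and confirm that it lands in the \emph{specific} parabolic component $Y$. Here irreducibility of the $\chi_i$ is what saves the argument, since it forces $\chi^\infty$ to have the prescribed irreducible restrictions; and the non-parabolic ``same axis'' condition of Lemma~\ref{lemma:uniqueness non-par Gamma} degenerates, as $\tau_k\to\pm2$, precisely into the parabolic ``common fixed point'' condition used in Lemma~\ref{lemma:rep par Gamma} to build $Y$. Tracking the two rotations of Lemma~\ref{lemma:uniqueness non-par Gamma} as their axes collapse onto the shared parabolic fixed point then shows that the limiting bending datum is one of those producing $Y$, giving $\chi^\infty\in Z\cap Y$ and hence $Z\cap Y\neq\emptyset$.
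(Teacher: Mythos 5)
Your overall strategy coincides with the paper's own proof: both arguments approach the parabolic base point from inside $Z$ along non-parabolic characters with meridian trace tending to $\pm 2$, invoke Proposition~\ref{prop:convergence} to extract a convergent subsequence of characters, use the two-solution structure of Lemma~\ref{lemma:uniqueness non-par Gamma} (exactly as in Lemma~\ref{lemma:interscettr0}) to keep the approximating sequence inside the single component $Z$, and then must identify the limit as a point of $Y$. Up to that last step your argument is sound, and it is the same argument as the paper's, merely narrated in the opposite direction (the paper starts from a point of $Y$ and realises it as a limit from $Z$).

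The gap is in that last identification, and it is precisely the point the paper flags at the outset of its proof. The characters of $Z$ are by construction characters of representations with $a\rho(\mu_1)=a\rho(\mu_{n+1})$ \emph{equal as matrices} (Remark~\ref{remark:equalorinverse}); since Proposition~\ref{prop:convergence} gives convergence at the level of representations, your limit $\chi^\infty$ is the character of a parabolic representation satisfying $\rho(\mu_1)=\rho(\mu_{n+1})$. By contrast, $Y$ is swept out by parabolic bendings subject only to the weaker \emph{commutation} condition of Lemma~\ref{lemma:rep par Gamma} (common fixed point in $\hat{\mathbb C}$). Your phrase that the non-parabolic ``same axis'' condition degenerates into the parabolic ``common fixed point'' condition conflates equality with commutation, and so does not yet show that the limit lies in the component $Y$ rather than in some other component of $X_{\rm par}(\Gamma)$ over the same base point: one must know that $Y$ actually contains equality characters and that your limit is one of them. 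The paper settles this by an explicit computation: normalising so that $\rho_1(\mu_2)=\rho_2(\mu_2)$ fixes $\infty$, $\rho_2(\mu_3)=\rho_3(\mu_3)$ fixes $0$, and $\rho_1(\mu_1)$, $\rho_3(\mu_4)$ both fix $1$, it solves $g\rho_1(\mu_1)g^{-1}=h\rho_3(\mu_4)h^{-1}$ with $g\in A_1$, $h\in A_2$, finds exactly two solutions $x=\pm\sqrt{b/a}$, $y=\pm\sqrt{a/b}$ (which lie in the bending fibre through points of $Y$, hence in $Y$), and observes that the two solutions are swapped by the rotation $R$ about the axis with endpoints $0$ and $\infty$ --- precisely the parabolic limit of the rotations of Remark~\ref{remark:2solutions_yfromcrossratio} and Lemma~\ref{lemma:uniqueness non-par Gamma}. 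This computation is what turns ``the limiting bending datum is one of those producing $Y$'' from an expectation into a proof; to complete your write-up you would need to supply it (or, alternatively, an irreducibility argument for the parabolic commuting fibre $\pi^{-1}(\chi_1,\ldots,\chi_n)\cap X(\Gamma)$).
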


\begin{proof}
The proof of this lemma is similar to the proof of 
Lemma~\ref{lemma:interscettr0}, because $1$ is not a root of the Alexander 
polynomial of any knot. However here we have to use that $Y$ contains 
characters of representations $\rho$ satisfying $\rho(\mu_1)=\rho(\mu_{n+1})$
and not only that $\rho(\mu_1)$ and $\rho(\mu_{n+1})$ commute. 
We suppose first that $n=3$. Let $\rho$ be a representation with character 
$\chi_\rho\in Y$. Let $\rho_1$, $\rho_2$ and $\rho_3$ be the restrictions of 
$\rho$. We have that, up to conjugacy, the parabolic transformation 
$\rho_1(\mu_2)= \rho_2(\mu_2)$ fixes $\infty\in\hat{\mathbb C}$, 
$\rho_2(\mu_3)=\rho_3(\mu_3)$ fixes $0$, and $\rho_1(\mu_1)$ and 
$\rho_3(\mu_4)$ fix $1$. This can be achieved thanks to
Lemma~\ref{lemma:rep par Gamma}. We want to conjugate further $\rho_1$ and 
$\rho_3$ by matrices $g,h\in PSL_2(\mathbb C)$ so that 
$g\rho_1(\mu_1)g^{-1}=h\rho_3(\mu_4)h^{-1}$ (i.e. we want them to be equal, not 
only commuting), and so that $g\rho_1(\mu_2)g^{-1}=\rho_1(\mu_2)$ and 
$h\rho_3(\mu_3)h^{-1}=\rho_3(\mu_3)$. Thus we have to choose
$$
g=\pm \begin{pmatrix}
       1 & x-1 \\
      0  & 1
      \end{pmatrix}
\ \textrm{ and }\ 
h=\pm \begin{pmatrix}
       1 & 0 \\
      y-1  & 1
      \end{pmatrix},
$$
for $x,y\in\mathbb C$. Since $\rho_1(\mu_1)$ and $\rho_3(\mu_4)$ are parabolic 
matrices that fix $1$,
$$
\rho_1(\mu_1)= \begin{pmatrix}
       1 +a & -a \\
      a  & 1 -a
      \end{pmatrix}
\ \textrm{ and }\ 
\rho_3(\mu_4)= \begin{pmatrix}
       1 +b & -b \\
      b  & 1 -b
      \end{pmatrix}
$$
where $a,b\in \mathbb C\setminus \{0\}$.
A straightforward computation shows that the equation
$$
g \rho_1(\mu_1) g^{-1}=h \rho_3(\mu_4) h^{-1}
$$
has solutions:
$$
x=\pm \sqrt{b/a}\textrm{ and } y=1/x=\pm \sqrt{a/b}.
$$
The resulting matrices are:
$$
g\rho_1(\mu_1)g^{-1}=h\rho_3(\mu_4)h^{-1}=
\begin{pmatrix}
       1 \pm\sqrt{ab} & -b \\
      a  &  1 \mp\sqrt{ba}
      \end{pmatrix}.
$$
Notice that changing the sign of the square root corresponds to conjugating:
$$
\begin{pmatrix}
       1 -\sqrt{ab} & -b \\
      a  &  1 +\sqrt{ba}
      \end{pmatrix}
= 
R
\begin{pmatrix}
       1 +\sqrt{ab} & -b \\
      a  &  1 -\sqrt{ba}
      \end{pmatrix}^{-1}
R^{-1}$$
where 
$$
R= \begin{pmatrix}
       -i & 0 \\
      0  &    i
      \end{pmatrix}
$$
is the matrix of a rotation around the axis with end-points $0$ and $\infty$,
that are precisely the points in $\hat{\mathbb C}=\mathbb C\cup\{\infty\}$
fixed by $\rho_1(\mu_2)$ and $\rho_3(\mu_3)$. This relation by a rotation can 
be seen as the limit of the rotations that appear for the non-parabolic
representations in Remark~\ref{remark:2solutions_yfromcrossratio} and
Lemma~\ref{lemma:uniqueness non-par Gamma}. This guarantees that the parabolic 
representations are the limit of non-parabolic representations in $Z$ provided 
by Proposition~\ref{prop:convergence}.

Similarly, for $n\geq 4$, we apply the openness argument to the $n-3$ 
conjugating matrices $a_1\ldots,a_{n-3}$ in $A_1,\ldots,A_{n-3}$: $a_i$ can be 
chosen in an open (Zariski dense) subset of $A_i$, for $i=1,\cdots,n-1$. Then 
we apply again the  the previous argument to $a_{n-2}$ and $a_{n-1 }$.
\end{proof}

\begin{Lemma}
\label{lemma:K_par_in_Gamma_par}
Let $Y\subset  X_{\rm par}(K)$ be an irreducible component such that $Y \cap
X(\Gamma)\neq\emptyset$. Assume there is a character $\chi_{\rho_0}\in Y \cap
X(\Gamma)$ such that for three meridians $\mu_i$, $\mu_j$ and $\mu_k$ the
points of $\hat{\mathbb C}$ fixed by $\rho_0(\mu_i)$, $\rho_0(\mu_j)$ and 
$\rho_0(\mu_k)$ are all different. Then $Y \subset X(\Gamma)$.
\end{Lemma}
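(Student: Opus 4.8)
The plan is to prove the local statement that the germ of $X_{\rm par}(K)$ at $\chi_{\rho_0}$ lies in $X(\Gamma)$, and then to invoke the irreducibility of $Y$. Indeed, if every parabolic character of $K$ in a neighbourhood $U$ of $\chi_{\rho_0}$ factors through $\Gamma$, then $Y\cap U\subset X(\Gamma)$; since $Y$ is irreducible and $X(\Gamma)$ is closed, taking Zariski closures gives $Y=\overline{Y\cap U}\subset X(\Gamma)$. Recall that, by definition, a representation factors through $\Gamma$ exactly when the four meridians $\rho(\mu_1),\rho(\mu_1'),\rho(\mu_{n+1}),\rho(\mu_{n+1}')$ pairwise commute, which in the parabolic setting means that they share a single fixed point of $\hat{\mathbb C}$ (equivalently, that the six commutator traces all equal $2$).

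The first ingredient is the rigidity of the $2$-bridge factors. As used in Theorem~\ref{theorem:parabolic}, the parabolic characters of a $2$-bridge knot form a finite set; hence each restriction $\chi\mapsto\chi|_{\pi_1(B(\beta_i/\alpha_i))}$ takes only finitely many values on $X_{\rm par}(K)$ and is locally constant near $\chi_{\rho_0}$, equal to its value $\chi_i$ there. By Lemma~\ref{lemma:rootsAlex} and Remark~\ref{remark:abelian_par}, $\chi_i$ determines the restriction $\rho|_{\pi_1(B(\beta_i/\alpha_i))}$ up to conjugacy, whether it is irreducible or abelian. Thus, after fixing the factors $\rho_1,\dots,\rho_n$, every parabolic representation of $\pi_1(K)$ near $\rho_0$ differs from $\rho_0$ only through the amalgamating data of Section~\ref{section:bending}: it is an amalgam of the fixed $\rho_i$ with conjugating parameters in the centralisers of the shared meridians, exactly as in Corollary~\ref{corollary:bending_par}.

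The hypothesis that three meridians have distinct fixed points at $\rho_0$ guarantees that $\rho_0$ is irreducible, so that characters near $\chi_{\rho_0}$ lift to representations by Lemma~\ref{lemma:varietycharacters} and the above description is available; it also lets us normalise the conjugation by placing three of the fixed points at prescribed positions of $\hat{\mathbb C}$. With the factors rigid and the frame fixed, the four boundary meridians $\rho(\mu_1),\rho(\mu_1'),\rho(\mu_{n+1}),\rho(\mu_{n+1}')$ become explicit parabolic matrices depending algebraically on the finitely many bending parameters. The crux is then to impose the two $(n+1)$st-tangle relations $w_1\mu_1=\mu_1'w_1$ and $w_{n+1}\mu_{n+1}=\mu_{n+1}'w_{n+1}$ and to check that, near $\rho_0$, they hold only when these four parabolics share a fixed point, i.e. only on $X(\Gamma)$. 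This is a computation of the same nature as the one in the proof of Proposition~\ref{proposition:intersection}, where two parabolic fixed points were forced to coincide; the nondegeneracy hypothesis serves precisely to confine us to the irreducible branch on which this coincidence is the unique solution.

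I expect this last verification to be the main obstacle. The delicate point is that $\mu_1'$ and $\mu_{n+1}'$ are not free: they are pinned both by the tangle relations and by the rigid restrictions to the first and last $2$-bridge factors, and one must show that these two requirements are simultaneously met only along the commuting locus, ruling out spurious solutions off $X(\Gamma)$. Granting this, the germ of $X_{\rm par}(K)$ at $\chi_{\rho_0}$ lies in $X(\Gamma)$, and the irreducibility of $Y$ yields $Y\subset X(\Gamma)$ as explained above.
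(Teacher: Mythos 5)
Your local-to-global reduction (germ of $Y$ at $\chi_{\rho_0}$ inside $X(\Gamma)$, then irreducibility of $Y$ and closedness of $X(\Gamma)$) is sound, but the core of your argument rests on an ill-defined operation and is circular at the key point. The group $\pi_1(B(\beta_i/\alpha_i))$ is a subgroup of $\pi_1(K')$, not of $\pi_1(K)$: a character $\chi\in X_{\rm par}(K)$ admits a ``restriction to the $2$-bridge factors'' only once it is known to factor through $\Gamma$, which is exactly what the lemma asks you to prove. Inside $\pi_1(K)$, the subgroup generated by the meridians of the $i$-th tangle is a quotient of a free group of rank two, not a $2$-bridge knot group, and its parabolic characters form a positive-dimensional family (by Lemma~\ref{lemma:Freerank2}, traces $(\pm2,\pm2,z)$ with $z$ free), so no finiteness or local constancy is available. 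Consequently your claim that every parabolic representation of $\pi_1(K)$ near $\rho_0$ ``is an amalgam of the fixed $\rho_i$ with conjugating parameters in the centralisers'' presupposes the conclusion: the whole difficulty is that a deformation inside $R_{\rm par}(K)$ may break the identities $\rho(\mu_l)=\rho(\mu_l')$, in which case it is not a bending of anything. (A smaller inaccuracy: by Remark~\ref{remark:abelian_par}, a reducible parabolic character is trivial but may be the character of a non-trivial abelian representation, so the character does not determine the abelian restrictions up to conjugacy.) Finally, the step you yourself flag as ``the main obstacle'' --- that the $(n+1)$-st tangle relations force the commuting locus near $\rho_0$ --- is precisely where all the content lies, and it is not carried out.

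For comparison, the paper's proof goes through a different mechanism, and it shows what your plan is missing. Assuming $Y\not\subset X(\Gamma)$, the curve selection lemma produces an analytic arc $\rho_s\in R_{\rm par}(K)$ with $\chi_{\rho_s}\notin X(\Gamma)$ for $s\neq 0$. The Wirtinger presentation of the Montesinos knot gives the single element $f=\mu_1^{-1}\mu_1'=\cdots=\mu_{n+1}^{-1}\mu_{n+1}'$, with $\rho_0(f)=\operatorname{Id}$. The analytic computation of Claim~\ref{claim:fixedpoints} shows that if $\rho_s(f)\neq\operatorname{Id}$, then one of its (at most two) fixed points must converge, as $s\to 0$, to the fixed point of $\rho_0(\mu_l)$, and this for every $l$; the hypothesis that $\rho_0(\mu_i)$, $\rho_0(\mu_j)$, $\rho_0(\mu_k)$ have three distinct fixed points then forces $\rho_s(f)\equiv\operatorname{Id}$. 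This is the real role of that hypothesis --- in your proposal it is used only for irreducibility and normalisation. Once $\rho_s(\mu_l)=\rho_s(\mu_l')$ for all $l$, the restriction of $\rho_s$ to the last tangle factors through the $2$-bridge group $\pi_1(B(\alpha_{n+1}/\beta_{n+1}))$, and since it is a parabolic deformation of an abelian representation it stays abelian by Lemma~\ref{lemma:rootsAlex}; hence $\chi_{\rho_s}\in X(\Gamma)$, a contradiction. To salvage your outline you would need to replace the ``rigidity of the factors'' step by an argument of this kind, valid for deformations in $R_{\rm par}(K)$ itself, before any amalgam structure is available.
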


\begin{proof}
Let $\rho_0\in R_{\rm par}(K)$ be a parabolic representation with character
$\chi_0\in Y \cap X(\Gamma)$ satisfying the hypothesis of the lemma.
Seeking a contradiction, we assume that $Y\cap X(\Gamma)$ is not equal to $Y$.
Then, by the curve selection lemma, there exists a deformation 
$\rho_{s}\in R_{\rm par}(K)$ of $\rho_0$, analytic in 
$s\in (-\varepsilon,\varepsilon)$, such that 
$\chi_{\rho_s}\in Y\setminus (Y \cap X(\Gamma))$ for $s\neq 0$.
Using the notation of Figure~\ref{fig:composite}, we have the following
relations
$$
\mu_1^{-1}\mu_1'=\mu_2^{-1}\mu_2'=\cdots = \mu_{n+1}^{-1}\mu_{n+1}'.
$$
Let $f= \mu_l^{-1}\mu_l'$, we have that $\rho_0(f)$ is the identity. We claim
that $\rho_s(f)$ is also trivial for $s\in (-\varepsilon,\varepsilon)$.
Otherwise, if $\rho_s(f)$ was non-trivial for some $s\in
(-\varepsilon,\varepsilon)$, then, by Claim~\ref{claim:fixedpoints} below, one
of its fixed points in $\hat{\mathbb C}$ would be arbitrarily close to the 
points fixed by $\rho_0(\mu_l)= \rho_0(\mu_l')$, for each $l=1,\ldots, n+1$.
But since we assume that the points fixed by $\rho_0(\mu_i)$, $\rho_0(\mu_j)$ 
and $\rho_0(\mu_k)$ are different, and since $\rho_s(f)$ has at most two fixed 
points in $\hat{\mathbb C}$, $\rho_s(f)$ must be trivial. We deduce that 
$\rho_s(\mu_n)=\rho_s(\mu_n')$ and $\rho_s(\mu_{n+1})=\rho_s(\mu_{n+1}')$. 
In particular the restriction of $\rho_s$ factors to a representation
$$
\varphi_s:\pi_1(B(\tfrac{\alpha_{n+1}}{\beta_{n+1}}))\to SL_2(\mathbb C).
$$ 
Since, for each $s\in(-\varepsilon,\varepsilon)$, $\varphi_s$ is parabolic and
it is a deformation of a parabolic abelian representation $\varphi_0$ of 
$B(\frac{\alpha_{n+1}}{\beta_{n+1}})$, $\varphi_s$ is still abelian for each
$s\in(-\varepsilon,\varepsilon)$, by Lemma~\ref{lemma:rootsAlex}, and therefore 
$\rho_s\in R_{\rm par}(\Gamma)$ and $\chi_{\rho_s}\in X_{\rm par}(\Gamma)$. 
Hence we get a contradiction that proves the lemma, assuming
Claim~\ref{claim:fixedpoints}.
\end{proof}

\begin{Claim}
\label{claim:fixedpoints}
Let $\rho_s\in R_{\rm par}(K)$ be a deformation of $\rho_0$ as in the proof of
Lemma~\ref{lemma:K_par_in_Gamma_par}, analytic in 
$s\in (-\varepsilon,\varepsilon)$. Suppose that $\rho_s(\mu_i^{-1}\mu_i')$ is 
nontrivial for $s\neq 0$. Then at least one of the fixed points of 
$\rho_s(\mu_i^{-1}\mu_i')$ in $\hat{\mathbb C}$ converges to the fixed point of
$\rho_0(\mu_i)=\rho_0(\mu_i')$ as $s\to 0$.
\end{Claim}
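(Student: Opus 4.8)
The plan is to track the fixed points of $F_s:=\rho_s(\mu_i^{-1}\mu_i')=A_s^{-1}A_s'$ by a leading-order analysis near $s=0$, where $A_s:=\rho_s(\mu_i)$ and $A_s':=\rho_s(\mu_i')$. First I would normalize: since $\mu_i$ and $\mu_i'$ are conjugate, $A_s$ and $A_s'$ share a common trace $\pm2$, constant in $s$ by analyticity; replacing $(A_s,A_s')$ by $(-A_s,-A_s')$ if necessary (which leaves $F_s$ unchanged) I may assume this trace is $+2$. Writing $A_s=I+M_s$ and $A_s'=I+M_s'$, parabolicity together with Cayley--Hamilton forces $M_s$ and $M_s'$ to be nilpotent of rank one, and at $s=0$ we have $M_0=M_0'=a_0N$, where $N$ is the rank-one nilpotent whose kernel is the line $\ell_P\subset\mathbb C^2$ corresponding to $P\in\hat{\mathbb C}$. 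The scalar $a_0$ is nonzero because, by the hypothesis of Lemma~\ref{lemma:K_par_in_Gamma_par}, $\rho_0(\mu_i)$ is a genuine parabolic with a single fixed point; hence $A_s$ is a genuine parabolic for all small $s$, and $\ell_P$ is the common eigenline of $A_0=A_0'$.

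The key exact identity I would use is
$$
F_s-I=A_s^{-1}A_s'-I=A_s^{-1}(A_s'-A_s)=A_s^{-1}D_s,\qquad D_s:=M_s'-M_s,
$$
so that the fixed points of $F_s$ in $\hat{\mathbb C}$ are precisely the eigenlines of $A_s^{-1}D_s$. By hypothesis $F_s\neq I$ for $s\neq0$, hence $D_s\not\equiv0$; let $s^m D_0$ (with $D_0\neq0$) be the leading term of its analytic expansion. Since $A_s^{-1}$ is invertible, analytic, and $A_s^{-1}\to A_0^{-1}$, the normalized matrix $s^{-m}(F_s-I)=A_s^{-1}\bigl(s^{-m}D_s\bigr)$ converges to $A_0^{-1}D_0$ as $s\to0$, and I would deduce that the eigenlines of $F_s$ converge to the eigenlines of $A_0^{-1}D_0$.

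It then remains to show that $\ell_P$ is an eigenline of $A_0^{-1}D_0$, and for this I would exploit that $M_s^2=(M_s')^2=0$ for all $s$. Writing $M_s=a_0N+E_s$, the nilpotency relation expands to $NE_s+E_sN=-E_s^2/a_0$, and similarly for $E_s'$; subtracting gives $ND_s+D_sN=-\bigl((E_s')^2-E_s^2\bigr)/a_0=-(E_s'D_s+D_sE_s)/a_0$, whose right-hand side is of strictly higher order than $D_s$, since each of $E_s,E_s'$ vanishes at $s=0$. Hence the leading coefficient satisfies $ND_0+D_0N=0$. In a basis with $\ell_P=\langle e_1\rangle$ and $N=\left(\begin{smallmatrix}0&1\\0&0\end{smallmatrix}\right)$, this anticommutation relation forces the $(2,1)$-entry of $D_0$ to vanish and its diagonal entries to be opposite, so $D_0$ is upper triangular; as $A_0^{-1}=I-a_0N$ is also upper triangular, so is $A_0^{-1}D_0$, and therefore $\ell_P=\langle e_1\rangle$ is one of its eigenlines.

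Finally I would pass from eigenlines of $A_0^{-1}D_0$ to eigenlines of $F_s$, and this is where the main (and only delicate) obstacle lies: continuity of eigenlines when $A_0^{-1}D_0$ is not diagonalizable. A direct computation gives $A_0^{-1}D_0=\left(\begin{smallmatrix}x&y+a_0x\\0&-x\end{smallmatrix}\right)$ with eigenvalues $\pm x$. If $x\neq0$ the eigenvalues are distinct, the eigenlines depend analytically on the perturbation, and exactly one fixed point of $F_s$ converges to $\ell_P$. If $x=0$ then $A_0^{-1}D_0$ is a nonzero nilpotent with unique eigenline $\ell_P$, and under the analytic perturbation both eigenlines of $F_s$ collapse to $\ell_P$; in either case at least one fixed point of $\rho_s(\mu_i^{-1}\mu_i')$ converges to $P$, which is the assertion. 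The case where the leading deviation of $M_s$ from $a_0N$ occurs at an order $m_0<m$ is handled identically, since the error term in the anticommutation relation is then of order $s^{m_0+m}=o(s^m)$.
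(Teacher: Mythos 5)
Your proof is correct, but it follows a genuinely different route from the paper's. The paper argues in explicit coordinates: it conjugates the whole deformation so that $\rho_s(\mu_i)$ is the \emph{constant} matrix $\left(\begin{smallmatrix}1&1\\0&1\end{smallmatrix}\right)$, writes $\rho_s(\mu_i')$ with entries $1+a(s)$, $1+b(s)$, $c(s)$, $1-a(s)$, where parabolicity forces $a^2+(1+b)c=0$, and reads off the fixed-point equation of $\rho_s(\mu_i^{-1}\mu_i')$, namely $cz^2+(c-2a)z+a-b=0$. The punch line is that the \emph{sum} of the two roots equals $-2(1+b)/a-1$, which tends to $\infty$ as $s\to 0$; hence at least one root tends to $\infty$, the fixed point of $\rho_0(\mu_i)$. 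This sum-of-roots trick entirely sidesteps the dichotomy you confront at the end (simple eigenvalues versus nilpotent limit) and needs no perturbation theory of eigenlines; on the other hand it silently assumes $a\not\equiv 0$ (if $a\equiv 0$, then also $c\equiv 0$ and the claim is immediate, since $\rho_s(\mu_i')$ then fixes $\infty$ as well), a degenerate case your hypothesis $D_s\not\equiv 0$ absorbs automatically. Your argument instead works invariantly: the identity $F_s-I=A_s^{-1}D_s$, the extraction of the leading coefficient $D_0$ of $D_s=M_s'-M_s$, and the anticommutation relation $ND_0+D_0N=0$ derived from $M_s^2=(M_s')^2=0$ show that the rescaled limit $A_0^{-1}D_0$ of $s^{-m}(F_s-I)$ is triangular, i.e.\ fixes $\ell_P$; convergence of fixed points then follows from continuity of simple eigenlines when $x\neq 0$, and from compactness of $\hat{\mathbb C}$ together with closedness of the eigenline condition when $A_0^{-1}D_0$ is nilpotent (your treatment of this last point is terse but correct). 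What your approach buys: no need to normalize $\rho_s(\mu_i)$ to be constant in $s$, a transparent structural mechanism (the leading perturbation of $A_s'-A_s$ must anticommute with the parabolic direction $N$, hence preserves its kernel), and finer information in the generic case $x\neq 0$, where exactly one fixed point converges to $P$. What the paper's approach buys: brevity and uniformity, since the blow-up of the sum of roots makes any case distinction, and any appeal to eigenvector perturbation, unnecessary.
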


\begin{proof}
We may assume that 
$$
\rho_0(\mu_i)=\rho_0(\mu_i')=\begin{pmatrix}
                              1 & 1\\ 
			      0 & 1
                             \end{pmatrix}.
$$
In addition, since $\rho_s(\mu_i)$ is parabolic, its fixed point in
$\hat{\mathbb C}$ changes analytically and, conjugating by matrices that map
it to $\infty$, we may assume that this fixed point is constant. Furthermore, 
conjugating by diagonal matrices that depend analytically on $s$, we may assume 
that $\rho_s(\mu_i)=\rho_0(\mu_i)$ remains constant in 
$s\in(-\varepsilon,\varepsilon)$. Since $\rho_s(\mu_i')$ is parabolic, we then 
write
$$
\rho_s(\mu_i')=\begin{pmatrix}
                              1+a(s) & 1+ b(s) \\
			      c(s) & 1-a(s)
                             \end{pmatrix},
$$
where $a$, $b$ and $c$ are analytic functions in $s$ satisfying 
$
a(0)=b(0)=c(0)=0$ and  $a^2 +(1+b)c=0$.
Then, a straightforward computation gives that the points of $\hat {\mathbb C}$ 
fixed by 
$$
\rho_s(\mu_i^{-1}\mu_i' )= \begin{pmatrix}
                              1+a(s)-c(s) & b(s)-a(s) \\
			      c(s) & 1-a(s)
                             \end{pmatrix}
$$ 
are: 
$$
\{z\in \hat {\mathbb C}\mid  c z^2+(c-2 a)z+ a-b= 0\}.
$$
Using $c=-a^2/(1+b)$, the sum of the two solutions of this quadratic equation 
is 
$$
\frac{2 a(s) -c(s)}{c(s)}=\frac{-2(1+b(s))}{a(s)}-1 ,
$$
that converges to infinity as $s\to 0$. Thus, for $s$ sufficiently small, at 
least one of the solutions is arbitrarily close to $\infty$, the point fixed by 
$\rho_0(\mu_i)$.
\end{proof}

We can now prove:
 
\begin{Theorem}
\label{theorem:dim par}
Let $Y$ be a component of $X_{\rm par}(K)$ constructed in 
Theorem~\ref{theorem:parabolic}. Then $\dim Y= n-2$. 
\end{Theorem}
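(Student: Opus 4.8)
The plan is to match the lower bound $\dim Y\ge n-2$ already furnished by Theorem~\ref{theorem:parabolic} with a matching upper bound $\dim Y\le n-2$. The first step is to transfer the problem from $X(K)$ to $X(K')$. The representations produced in Theorem~\ref{theorem:parabolic} are amalgams of irreducible parabolic representations $\rho_i$, so within a single factor the meridians $\mu_i,\mu_{i+1}$ already have distinct fixed points on $\hat{\mathbb C}$, and adjoining a meridian from a second factor exhibits three meridians with pairwise distinct fixed points. Hence the hypothesis of Lemma~\ref{lemma:K_par_in_Gamma_par} is met and $Y\subset X(\Gamma)$; since $X(\Gamma)\subset X(K')$, the component $Y$ may be viewed inside $X(K')$ as a component of $X_{\rm par}(\Gamma)=X_{\rm par}(K')\cap X(\Gamma)$.

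Next I would pin down the dimension of the ambient parabolic stratum of $K'$. The restriction map $\pi$ sends $X_{\rm par}(K')$ to the product of the parabolic loci of the $X(B(\beta_i/\alpha_i))$, each of which is a finite set of points, so the base is $0$-dimensional. By Corollary~\ref{corollary:bending_par} every non-empty fibre over a parabolic base point has dimension exactly $s-1\le n-1$, with equality precisely when all $n$ factor characters are irreducible. Since the base is finite and $Y'$ is irreducible, $\pi(Y')$ is a single (all-irreducible) base point, and therefore the component $Y'$ of $X_{\rm par}(K')$ carrying our characters satisfies $\dim Y'=n-1$ exactly. By construction $Y\subseteq Y'$.

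It then remains to intersect with $X(\Gamma)$. As recorded in the discussion preceding Theorem~\ref{theorem:parabolic}, on the parabolic stratum the defining conditions of $X(\Gamma)$ collapse to the single equation $\chi([\mu_1,\mu_{n+1}])=2$, so $Y\subseteq Y'\cap\{\chi([\mu_1,\mu_{n+1}])=2\}$. The crucial point, and the step I expect to be the real obstacle, is to check that this hypersurface does not contain all of $Y'$. Granting this, $Y'$ being irreducible of dimension $n-1$ forces every component of the section to have pure dimension $n-2$; as $Y$ is an irreducible subvariety of the section with $\dim Y\ge n-2$, it is a whole component and $\dim Y=n-2$.

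To establish properness I would show that $\chi([\mu_1,\mu_{n+1}])$ is non-constant along the bending torus $A_1\times\cdots\times A_{n-1}$ parametrising $Y'$. Two parabolic elements commute if and only if they fix the same point of $\hat{\mathbb C}$, so it suffices to find a bending parameter $a$ for which the fixed point of $a\rho(\mu_1)$ differs from that of $a\rho(\mu_{n+1})$. Lemma~\ref{lemma:rep par Gamma} exhibits one $a$ making the two points coincide; but the fixed point of $a\rho(\mu_1)$ depends only on $a_1$, whereas that of $a\rho(\mu_{n+1})$ depends on the full product $a_1\cdots a_{n-1}$, so perturbing $a_{n-1}$ away from the commuting value separates them and makes the commutator non-trivial, whence $\chi([\mu_1,\mu_{n+1}])\not\equiv 2$ on $Y'$. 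This is exactly the genericity phenomenon already exploited in Lemma~\ref{lemma:interscettr0}; the care required is to verify that the separating deformation genuinely stays inside $Y'$ and does not force a coincidence of fixed points for algebraic reasons. With properness in hand, the dimension count of the previous paragraph yields $\dim Y=n-2$.
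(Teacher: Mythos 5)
Your proof is correct, but it takes the route that the paper relegates to a closing remark rather than the paper's primary argument. The paper's main proof is extrinsic: it invokes Proposition~\ref{proposition:intersection} to produce a non-parabolic component $Z$ (on which $\tau_\mu$ is non-constant) meeting $Y$, bounds $\dim (Z\cap Y)\le n-3$ using $\dim Z\le n-2$ from Theorem~\ref{theorem:non parabolic}, and plays this against $\dim (Z\cap Y)\ge \dim Y-1$, the intersection being cut out of $Y$ by the single condition that two commuting parabolics be equal. That argument costs the full machinery of Section~\ref{section:bound} (the convergence proposition, Lemma~\ref{lemma:interscettr0}, the Coxeter-orbifold dimension count behind Theorem~\ref{theorem:non parabolic}) plus the matrix computation behind Proposition~\ref{proposition:intersection}, but in exchange it also records how the parabolic and non-parabolic components meet. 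Your argument is the one the paper compresses into two lines at the end of its proof (``the same dimensional bound can be obtained directly by observing that $\dim X_{\rm par}(\Gamma)<\dim X_{\rm par}(K')$\dots''): work entirely inside $X_{\rm par}(K')$, bound the ambient component by $n-1$ via Corollary~\ref{corollary:bending_par} together with finiteness of the parabolic locus of each $2$-bridge factor, then cut by the single equation $\chi([\mu_1,\mu_{n+1}])=2$ and check it is non-trivial on the component. This is genuinely lighter: it needs neither Theorem~\ref{theorem:non parabolic} nor Proposition~\ref{proposition:intersection}. The one step you flag as the real obstacle --- that the separating bending deformation stays inside $Y'$ --- does close cleanly: since all restrictions $\chi_i$ at the unique base point are irreducible, every character in the fibre $\pi^{-1}(\chi_1,\ldots,\chi_n)$ is the character of a bending of one fixed amalgam $\rho$ (irreducible restrictions determine the $\rho_i$ up to conjugacy, and the matching conjugations are absorbed into $A_1\times\cdots\times A_{n-1}$), so this fibre is an irreducible closed set; as $Y'$ is an irreducible component of $X_{\rm par}(K')$ whose image under $\pi$ is that single point, $Y'$ lies in the fibre and maximality forces $Y'$ to equal the whole bending family, so your perturbation of $a_{n-1}$ never leaves $Y'$. (One harmless slip: with the paper's bending convention $\rho_1$ is never conjugated, so the fixed point of $a\rho(\mu_1)$ is independent of $a$, not merely ``dependent only on $a_1$''; this only strengthens your separation argument.)
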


\begin{proof}
By Lemma~\ref{lemma:K_par_in_Gamma_par}, we may assume that $Y$ is a component 
of $X_{par}(\Gamma)$. According to Proposition~\ref{proposition:intersection} 
there is a component $Z$ of $X(K)$ which intersects $Y$ and on which the trace 
of the meridian is non-constant. According to 
Proposition~\ref{proposition:intersection} there is a component $Z$ of $X(K)$ 
which intersects $Y$ and on which the trace of the meridian is non-constant. We 
have that $Z\cap Y$ is contained in the intersection of $Z$ with the 
hyperplanes defined by the condition that the trace of the meridian is equal to 
$\pm 2$. Using the fact that $\dim Z\le n-2$ (see 
Theorem~\ref{theorem:non parabolic}) and that the trace of the meridian is
non-constant on $Z$, we deduce that $\dim (Z\cap Y)\le n-3$. On the other hand,
$Z\cap Y$ is obtained from $Y$ by imposing just one condition: indeed, this is
the condition required for two parabolic matrices which commute to be the same.
It follows that $n-3\ge \dim (Z\cap Y)\ge \dim Y-1$, and $\dim Y\leq n-2$. 
The last statement follows from the fact that $\dim Y\ge n-2$.

The same dimensional bound can be obtained directly by observing that $\dim  
X_{par}(\Gamma) < \dim X_{par}(K')$, because for a generic character 
$\chi_\rho\in X_{par}(K') $, $\rho(\mu_1)$ and $\rho(\mu_{n+1})$ do not commute
(the fixed points in $\hat{\mathbb C}$ are different) and in addition, by 
Corollary~\ref{corollary:bending_par}, the dimension of the component of 
$X_{par}(K')$ containing $Y$ is $\leq n-1$.
\end{proof}

Of course, a similar result holds for the parabolic components of smaller
dimension described in Remark~\ref{remark:small par}, using the same argument,
Remark~\ref{remark:small components} and Corollary~\ref{corollary:bending_par}.

\section{Other non-standard components}
\label{section:Mattman}

In the previous sections we relied on bending to be able to construct new 
non-standard components. The commuting trick, however, allows to construct 
other non-standard components which are not obtained by bending. Consider for
instance a Montesinos knot $K$ of Kinoshita-Terasaka type with $n+1=3$ rational
tangles. In this case, the construction of Section~\ref{section:parabolic} can
be carried out, but only gives a finite number of parabolic representation up
to conjugacy. 
On the other hand, the argument of 
Section~\ref{section:non parabolic} does not apply anymore. 

It was however shown by Mattman that some of these knots admit non-standard
components on which the trace of the meridian is not constant. We start with a
simple observation.

\begin{Remark}
\label{remark:quotient}
Assume that for $i=1,2$, there is an epimorphism $\psi_i: 
\pi_1(B(\frac{\beta_i}{\alpha_i}))\longrightarrow G$ such that
$\psi_1(\mu_2)=\psi_2(\mu_2)$, and such that $\psi_1(\mu_1)$, $\psi_2(\mu_3)$
commute. Then each representation $\rho$ of $G$ into $SL_2({\mathbb C})$
induces a representation of $K$ in which the images of $\mu_1$ and $\mu_3$
commute. The induced representation is obtained by ``doubling" $\rho$.
\end{Remark}

Of course, one can adapt the reasoning in the above remark to the case of
Montesinos knots of Kinoshita-Terasaka type with more than three rational
tangles, or more generally to other knots obtained as Kinoshita-Terasaka sums.

Note that the hypothesis of Remark~\ref{remark:quotient} are trivially
satisfied when $\frac{\beta_1}{\alpha_1}=\frac{\beta_2}{\alpha_2}$ by taking
$G=\pi_1(B(\frac{\beta_1}{\alpha_1}))$. Indeed, one can show that one can 
choose a continued fraction expansion for $\frac{\beta}{\alpha}$ in which 
$a'_i=a''_1$ for all $i$.

Mattman considered the case where $\frac{\beta_1}{\alpha_1}=\frac{1}{3}$, and
$\frac{\beta_2}{\alpha_2}=\frac{1}{m}$, and found non-standard components in
the case where $m$ is a multiple of $3$. This follows from the fact that there
is a $\pi_1$-surjective map of degree $\frac{m}{3}$ from the $(2,m)$-torus knot 
onto the trefoil knot if $3$ divides $m$ (see Figure~\ref{fig:trefoil}). As a 
consequence, the character variety of these pretzel knots contains the 
character variety of the trefoil knot as non-standard component.

It is worth to point out that in general the non-standard components obtained
in this way have small dimension with respect to the non-standard components
obtained by bending.

\begin{figure}[h]
\begin{center}

  \includegraphics[height=6cm]{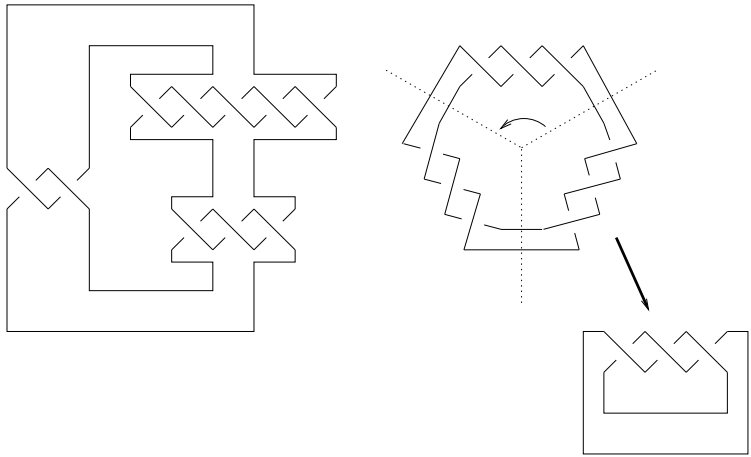}
\end{center}
\caption{One of the $(2,3,k)$ pretzel knots considered by Mattman, and a
$\pi_1$-surjective branched covering from the $(2,9)$-torus knot onto
the trefoil knot.}
\label{fig:trefoil}
\end{figure}

\section{Representations over fields of positive characteristic}
\label{section:Fp}

For an odd prime $p$, define 
$$
\Gamma_p=\Gamma/ \langle \mu^p\rangle
$$
where $\mu$ denotes a meridian as usual.

\begin{Lemma}
\label{lemma:dimgammap}
For almost all prime $p$, $\dim X(\Gamma_p) \leq n-3$. 
\end{Lemma}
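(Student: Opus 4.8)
The plan is to work over $\mathbb C$ (as the convention dictates) and to exploit the single feature distinguishing characteristic $0$ from characteristic $p$: over $\mathbb C$ a finite-order element is diagonalisable, so the relation $\mu^p=1$ confines the meridian trace to a \emph{finite} set, whereas in characteristic $p$ a parabolic meridian already satisfies $\mu^p=1$ for free. Concretely, any $\rho\in R(\Gamma_p)$ is a representation of $\Gamma$ with $\rho(\mu)^p=\mathrm{Id}$; since $p$ is odd this forces $\operatorname{trace}\rho(\mu)\in S_p:=\{\zeta+\zeta^{-1}\mid \zeta^p=1\}$, a finite subset of $\mathbb C$ with $-2\notin S_p$. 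Moreover $\tau_\mu=2$ makes $\rho(\mu)$ a finite-order unipotent, hence $\rho(\mu)=\mathrm{Id}$, and as $\mu$ normally generates $\Gamma$ this makes $\rho$ trivial. So I would first record the inclusion
\[
X(\Gamma_p)\subseteq\{\chi_{\mathrm{triv}}\}\cup\bigcup_{c\in S_p\setminus\{2\}}\bigl(X(\Gamma)\cap\{\tau_\mu=c\}\bigr),
\]
reducing the lemma to bounding each \emph{non-parabolic} slice $X(\Gamma)\cap\{\tau_\mu=c\}$, $c\neq\pm2$, by $n-3$.

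Next I would bound such a slice through the bending picture. Since $X(\Gamma)\subset X(K')$, I restrict via $\pi\colon X(K')\to X(B(\tfrac{\beta_1}{\alpha_1}))\times\cdots\times X(B(\tfrac{\beta_n}{\alpha_n}))$. By Proposition~\ref{remark:2-bridge} each factor is a union of curves on which $\tau_\mu$ is non-constant and proper, so for fixed $c$ the base locus $\{\tau_\mu=c\ \text{on every factor}\}$ is \emph{finite}. Over each base point the fibre is the bending fibre intersected with the defining condition of $X(\Gamma)$, namely that $\rho(\mu_1)$ and $\rho(\mu_{n+1})$ commute. Provided $c$ avoids the finitely many Alexander-root traces of the $n$ knots, every base character is irreducible or generic reducible, so Corollary~\ref{corollary:bending_generic} gives a bending fibre of dimension $s-1\le n-1$; and Lemma~\ref{lemma:uniqueness non-par Gamma} says the commuting condition leaves $a_1,\dots,a_{n-3}$ free while pinning $a_{n-2},a_{n-1}$ to finitely many values, cutting the fibre to dimension $\le n-3$. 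When fewer than $n$ of the $\chi_i$ are irreducible the representation factors through a Montesinos knot with fewer tangles (Remark~\ref{remark:small components}), and the same count gives an even smaller bound. Hence every good non-parabolic slice has dimension $\le n-3$.

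Finally I would dispose of the bad values, which is where ``almost all $p$'' enters. Let $B\subset\mathbb C$ be the finite set of exceptional traces (the Alexander-root traces of the $n$ factors, together with $\pm2$). For fixed $c\in B$ with $c\neq2$, writing $c=\zeta+\zeta^{-1}$, membership $c\in S_p$ requires $\zeta$ to be a $p$-th root of unity: if $\zeta$ is not a root of unity this never happens, and if $\zeta$ is a primitive $d$-th root of unity with $d\ge2$ it forces $d\mid p$, impossible for a prime unless $p=d$. Thus each $c\in B\setminus\{2\}$ lies in $S_p$ for at most one prime, so $S_p\cap(B\setminus\{2\})=\emptyset$ for all but finitely many $p$. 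For these $p$ every $c\in S_p\setminus\{2\}$ is a good non-parabolic value with slice of dimension $\le n-3$, the trivial character contributes dimension $0\le n-3$, and therefore $\dim X(\Gamma_p)\le n-3$.

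The hard part will be the fibre estimate in the middle step: one must check that the condition cutting $X(\Gamma)$ out of $X(K')$ is genuinely of codimension two on the bending fibre and does not degenerate, which is exactly the content of Lemma~\ref{lemma:uniqueness non-par Gamma}, and that this count survives when some factors contribute abelian characters (so $s<n$) by reducing to a knot with fewer tangles. The conceptual point worth stressing is that the whole lemma records how this single finiteness constraint on $\tau_\mu$ lowers the dimension by exactly one, from the value $n-2$ attained in Proposition~\ref{proposition:exoticcomponents} to $n-3$ — a drop that is invisible in characteristic $p$.
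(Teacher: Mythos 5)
Your outer structure is the paper's own: you decompose $X(\Gamma_p)$ into the trivial character and finitely many slices $X(\Gamma)\cap\{\tau_\mu=c\}$ with $c$ a trace of a $p$-th root of unity, and your root-of-unity argument that for almost all $p$ these values avoid any fixed finite bad set is correct. The divergence is in how a slice is bounded. The paper never touches bending fibres at this point: it invokes Lemma~\ref{lemma:interscettr0} and Theorem~\ref{theorem:non parabolic} (whose upper bound comes from Lemma~\ref{lemma:upper_bound}, proved by passing to the $\pi$-orbifold and the Coxeter polygon $P'$, where Proposition~\ref{proposition:dimsurface} gives $n-3$) to conclude that every component of $X(\Gamma)$ meeting $X(\Gamma_p)$ has dimension at most $n-2$ and non-constant $\tau_\mu$; a level set of a non-constant function on such a component then has dimension at most $n-3$. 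You instead re-prove a slice bound from scratch: finite base by properness of $\tau_\mu$ on the $2$-bridge curves, and fibres of dimension at most $n-3$ by bending plus Lemma~\ref{lemma:uniqueness non-par Gamma}. If that fibre estimate were complete, your proof would be a legitimate alternative bypassing Subsection~\ref{subsect:nonparabolic} entirely.

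But the fibre estimate has a genuine gap, precisely at the step you flag as the hard part. Lemma~\ref{lemma:uniqueness non-par Gamma} is a statement about \emph{generic} parameters: it assumes $a_1,\ldots,a_{n-3}$ are chosen so that $a\rho(\mu_1)$ and $a\rho(\mu_{n-1})$ generate an irreducible group, and only then pins $(a_{n-2},a_{n-1})$ down to two values. Generic finiteness of the fibres of the projection of the solution set $S\subset A_1\times\cdots\times A_{n-1}$ onto $A_1\times\cdots\times A_{n-3}$ does \emph{not} bound $\dim S$ by $n-3$: a component of $S$ could sit entirely over the degenerate locus where the axes of $a\rho(\mu_1)$ and $a\rho(\mu_{n-1})$ share an endpoint, about which the lemma says nothing; Equation~\ref{eqn:yfromcrossratio} itself degenerates there, and when the two axes actually coincide the commuting condition does admit a one-parameter family of solutions (for every $a_{n-2}$ there is an $a_{n-1}$ restoring the last axis, by the simple transitivity of Lemma~\ref{lemma:cross-ratio}); the paper even notes that such coincidences occur, e.g.\ for the sum of two copies of the same knot. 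To close the gap one must stratify: show that no fibre of the projection is two-dimensional (this uses irreducibility of $\rho_n$, since bending at the axis of $\rho(\mu_n)$ moves the axis of $\rho(\mu_{n+1})$ in one-dimensional orbits), so that the degenerate locus, having codimension at least one, contributes at most $(n-4)+1=n-3$. The same looseness affects your phrase ``the same count gives an even smaller bound'' when $s<n$: for $n=3$ and $s=2$ the count $s-3$ is negative and one must argue finiteness of the fibre directly. These repairs are not difficult, but they are missing, and they are exactly what the paper sidesteps by bounding whole components through the orbifold argument rather than through bending fibres.
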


\begin{proof}
By construction, 
$$
X(\Gamma_p)= X(\Gamma)\cap\{\tau_\mu= 2\cos (\tfrac{k\pi}p) 
\mid k=1,\ldots, \tfrac{p-i}2 \}.
$$ 
Hence, for almost all $p$, $X(\Gamma)\cap\{\tau_\mu= 2\cos (\frac{k\pi}p)  
\mid  k=1,\ldots, \frac{p-i}2 \}$ is contained in the union of some irreducible 
components $Z_1,\ldots, Z_r$ of $X(\Gamma)$, for which $\tau_{\mu}$ is 
non-constant. Lemma~\ref{lemma:interscettr0} and 
Theorem~\ref{theorem:non parabolic} apply to $Z_i$ and $\dim Z_i\leq n-2$.
\end{proof}

Given $p$ as in the previous lemma,   for almost every odd prime $q$,     
$\dim X(\Gamma_p)_{\overline{\mathbb F}_q} \leq n-3$. Thus the following
results tells that $X(\Gamma_p)$ ramifies at $p$:

\begin{Proposition}
\label{lemma:gammapmodp}
For almost all prime $p$, $\dim X(\Gamma_p) _{\Fp}\geq  n-2$. In particular 
$X(\Gamma_p)$ ramifies at $p$.
\end{Proposition}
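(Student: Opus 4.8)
The plan is to exploit the observation, already central to Riley's work, that in characteristic $p$ a nontrivial parabolic element has order exactly $p$. Consequently the $(n-2)$-dimensional parabolic component produced in Theorem~\ref{theorem:dim par}, on which the meridian has trace $+2$, is entirely contained in $X(\Gamma_p)_{\Fp}$, although over $\mathbb C$ the same component is not contained in $X(\Gamma_p)$ at all; this jump in dimension is exactly the ramification to be established.

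First I would fix the component to transfer. By Theorems~\ref{theorem:parabolic} and~\ref{theorem:dim par} there is, over $\mathbb C$, an irreducible component $Y\subset X_{\rm par}(\Gamma)$ of dimension $n-2$ containing irreducible characters. Choosing in the construction of Section~\ref{section:parabolic} the lift of each $\rho_i$ with trace $+2$, and using that all meridians are conjugate, we have $\tau_\mu\equiv 2$ on $Y$. Now $X(\Gamma)\cap\{\tau_\mu=2\}$ is defined over $\mathbb Z$, so by Propositions~\ref{proposition:X(G)} and~\ref{proposition:X(G)Fp} together with the transfer principle recalled in the introduction, for almost all $p$ there is a component $Y_{\Fp}\subset X(\Gamma)_{\Fp}\cap\{\tau_\mu=2\}$ of the same dimension $n-2$, still containing irreducible characters.

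The heart of the argument is to show $Y_{\Fp}\subset X(\Gamma_p)_{\Fp}$. Take $\chi$ in the dense open subset of irreducible characters of $Y_{\Fp}$ and let $\rho$ be an irreducible representation with $\chi_\rho=\chi$ (Lemma~\ref{lemma:varietycharacters}). Since $\chi(\mu)=2$, both eigenvalues of $\rho(\mu)$ equal $1$; as $\mu$ normally generates $\Gamma$ and $\rho$ is nontrivial, $\rho(\mu)\neq I$, so $\rho(\mu)=I+N$ is a nontrivial unipotent with $N^2=0$. In characteristic $p$ this gives $\rho(\mu)^p=I+pN=I$, and since $\mu$ normally generates, $\rho$ factors through $\Gamma_p=\Gamma/\langle\mu^p\rangle$; hence $\chi\in X(\Gamma_p)_{\Fp}$. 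As this holds on a dense subset of the irreducible set $Y_{\Fp}$, we get $Y_{\Fp}\subset X(\Gamma_p)_{\Fp}$ and therefore $\dim X(\Gamma_p)_{\Fp}\geq n-2$. Together with the bound $\dim X(\Gamma_p)_{\overline{\mathbb F}_q}\leq n-3$ for almost all $q$ recorded before the statement (a consequence of Lemma~\ref{lemma:dimgammap} via the same transfer principle), the dimension strictly increases at $p$, which is the claimed ramification.

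I expect the main obstacle to be the transfer step: one must ensure that the dimension, the irreducibility, and the trace-$2$ locus of $Y$ are genuinely preserved under reduction modulo $p$ for all but finitely many $p$. This is the standard spreading-out argument justified in the introduction---the dimension and the irreducible decomposition of a $\mathbb Z$-variety are algorithmically computable and stable for almost all primes---but one must check in particular that the generic character of $Y_{\Fp}$ stays irreducible, so that $\rho(\mu)$ is a genuine nontrivial unipotent and not the identity. The decisive contrast is with characteristic $0$, where this same unipotent has infinite order and $Y$ is not contained in $X(\Gamma_p)$, which is precisely why $p$ becomes exceptional.
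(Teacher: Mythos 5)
Your proposal is correct and takes essentially the same approach as the paper: the paper's proof likewise observes that in characteristic $p$ a representation of $\Gamma$ factors through $\Gamma_p$ exactly when $\rho(\mu)$ is unipotent, deduces $X(\Gamma_p)_{\Fp}=X_{\rm par}(\Gamma)_{\Fp}$, and then invokes the same spreading-out principle to transfer $\dim X_{\rm par}(\Gamma)\geq n-2$ from characteristic $0$ to $\Fp$ for almost all $p$. Your variant, which transfers a single trace-$+2$ component and proves one inclusion, is the same argument in substance (and is in fact slightly more careful about the sign of the trace, since trace $-2$ representations do not factor through $\Gamma_p$ in $SL_2$).
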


\begin{proof}
Since $\Fp$ has characteristic $p$, then a representation of $\Gamma$ in 
$SL_2(\Fp)$ factors through $\Gamma_p$ iff $\mu$ is mapped to a parabolic 
element. Thus $X(\Gamma_p)_{\Fp}=X_{par}(\Gamma)_{\Fp}$. Moreover, for almost
all $p$, $X_{par}(\Gamma)_{\Fp}$ 
has the same dimension as $X_{par}(\Gamma)$, that is $\geq n-2$.
\end{proof}

Let $\mathcal O_p$ denote the orbifold with underlying space $S^3$,
singular locus $K$ and ramification of order $p$, an odd prime.
Recall that the orbifold fundamental group of   $\mathcal O_p$
is $\pi_1(S^3\setminus \mathcal N(K))/\langle \mu^p\rangle $.
The results above show that the subvariety 
 $X(\Gamma_p)_{\mathbb K}\subseteq X(\mathcal O_p)_{\mathbb K}$ 
has a larger dimension for $\mathbb K=\Fp$ than for $\mathbb K=\mathbb C$.

The extra ideal points of  $X(\Gamma_p)_{\Fp}\subseteq X(\mathcal O_p)_{\Fp}$
give rise to essential 2-suborbifolds of $\mathcal O_p$ which meet $K$.
They correspond to properly embedded essential surfaces in the 
exterior of $K$ whose boundary components are meridians. This 
is typically the case of Conway spheres.

It would be interesting to understand whether these essential
2-suborbifolds of $\mathcal O_p$ can be associated to  ideal 
points of curves in $X(\mathcal O_p)_{\mathbb K}$ for an
arbitrary ${\mathbb K}$.

\paragraph{Acknowledgements}
The authors are indebted to Ariane M\'ezard for suggesting the book of
Cox, Little and O'Shea \cite{CLoS} as a reference in arithmetic
geometry and for valuable discussion.
The second author is partially supported by the Spanish Micinn through grant MTM2009-0759 and by the Catalan AGAUR through grant SGR2009-1207. He
received the prize ``ICREA Acad\`emia''  for excellence in research, funded by the Generalitat de Catalunya. He  thanks the Universit\'e de Provence
for support and hospitality.

\begin{footnotesize}

\bibliographystyle{plain}

\begin{thebibliography}{10}

\bibitem{BoileauZimmermann}
Michel Boileau and Bruno Zimmermann.
\newblock Symmetries of nonelliptic {M}ontesinos links.
\newblock {\em Math. Ann.}, 277(3):563--584, 1987.

\bibitem{BoileauZimmermannpi}
Michel Boileau and Bruno Zimmermann.
\newblock The {$\pi$}-orbifold group of a link.
\newblock {\em Math. Z.}, 200(2):187--208, 1989.

\bibitem{BonahonSiebenmann}
F.~Bonahon and L.~Siebenmann.
\newblock The classification of {S}eifert fibred {$3$}-orbifolds.
\newblock In {\em Low-dimensional topology ({C}helwood {G}ate, 1982)},
  volume~95 of {\em London Math. Soc. Lecture Note Ser.}, pages 19--85.
  Cambridge Univ. Press, Cambridge, 1985.

\bibitem{BoyerZhang}
S.~Boyer and X.~Zhang.
\newblock On {C}uller-{S}halen seminorms and {D}ehn filling.
\newblock {\em Ann. of Math. (2)}, 148(3):737--801, 1998.

\bibitem{Burde}
Gerhard Burde.
\newblock Darstellungen von {K}notengruppen.
\newblock {\em Math. Ann.}, 173:24--33, 1967.

\bibitem{CLoS}
David Cox, John Little, and Donal O'Shea.
\newblock {\em Ideals, varieties, and algorithms}.
\newblock Undergraduate Texts in Mathematics. Springer, New York, third
  edition, 2007.
\newblock An introduction to computational algebraic geometry and commutative
  algebra.

\bibitem{CullerShalen}
Marc Culler and Peter~B. Shalen.
\newblock Varieties of group representations and splittings of {$3$}-manifolds.
\newblock {\em Ann. of Math. (2)}, 117(1):109--146, 1983.

\bibitem{deRham}
Georges de~Rham.
\newblock Introduction aux polyn\^omes d'un n\oe ud.
\newblock {\em Enseignement Math. (2)}, 13:187--194 (1968), 1967.

\bibitem{Goldman}
William~M. Goldman.
\newblock The symplectic nature of fundamental groups of surfaces.
\newblock {\em Adv. in Math.}, 54(2):200--225, 1984.

\bibitem{AcunaMontesinos}
F.~Gonz{\'a}lez-Acu{\~n}a and Jos{\'e}~Mar{\'{\i}}a Montesinos-Amilibia.
\newblock On the character variety of group representations in {${\rm
  SL}(2,{\bf C})$} and {${\rm PSL}(2,{\bf C})$}.
\newblock {\em Math. Z.}, 214(4):627--652, 1993.

\bibitem{HatcherThurston}
A.~Hatcher and W.~Thurston.
\newblock Incompressible surfaces in {$2$}-bridge knot complements.
\newblock {\em Invent. Math.}, 79(2):225--246, 1985.

\bibitem{HeusenerPorti}
Michael Heusener and Joan Porti.
\newblock The variety of characters in {${\rm PSL}_2(\mathbb C)$}.
\newblock {\em Bol. Soc. Mat. Mexicana (3)}, 10(Special Issue):221--237, 2004.

\bibitem{HPS}
Michael Heusener, Joan Porti, and Eva Su{\'a}rez~Peir{\'o}.
\newblock Deformations of reducible representations of 3-manifold groups into
  {${\rm SL}\sb{2}(\mathbf{C})$}.
\newblock {\em J. Reine Angew. Math.}, 530:191--227, 2001.

\bibitem{JohnsonMillson}
Dennis Johnson and John~J. Millson.
\newblock Deformation spaces associated to compact hyperbolic manifolds.
\newblock In {\em Discrete groups in geometry and analysis ({N}ew {H}aven,
  {C}onn., 1984)}, volume~67 of {\em Progr. Math.}, pages 48--106. Birkh\"auser
  Boston, Boston, MA, 1987.

\bibitem{KapovichBook}
Michael Kapovich.
\newblock {\em Hyperbolic manifolds and discrete groups}, volume 183 of {\em
  Progress in Mathematics}.
\newblock Birkh\"auser Boston Inc., Boston, MA, 2001.

\bibitem{LubotzkyMagid}
Alexander Lubotzky and Andy~R. Magid.
\newblock Varieties of representations of finitely generated groups.
\newblock {\em Mem. Amer. Math. Soc.}, 58(336):xi+117, 1985.

\bibitem{MagnusPeluso}
Wilhelm Magnus and Ada Peluso.
\newblock On knot groups.
\newblock {\em Comm. Pure Appl. Math.}, 20:749--770, 1967.

\bibitem{Mattman}
Thomas~W. Mattman.
\newblock The {C}uller-{S}halen seminorms of the {$(-2,3,n)$} pretzel knot.
\newblock {\em J. Knot Theory Ramifications}, 11(8):1251--1289, 2002.

\bibitem{ORK}
Tomotada Ohtsuki, Robert Riley, and Makoto Sakuma.
\newblock Epimorphisms between 2-bridge link groups.
\newblock In {\em The {Z}ieschang {G}edenkschrift}, volume~14 of {\em Geom.
  Topol. Monogr.}, pages 417--450. Geom. Topol. Publ., Coventry, 2008.

\bibitem{RileyII}
Robert Riley.
\newblock Parabolic representations of knot groups. {I}.
\newblock {\em Proc. London Math. Soc. (3)}, 24:217--242, 1972.

\bibitem{RileyI}
Robert Riley.
\newblock Parabolic representations of knot groups. {II}.
\newblock {\em Proc. London Math. Soc. (3)}, 31(4):495--512, 1975.

\bibitem{SchanuelZhang}
S.~Schanuel and X.~Zhang.
\newblock Detection of essential surfaces in 3-manifolds with {${\rm SL}\sb
  2$}-trees.
\newblock {\em Math. Ann.}, 320(1):149--165, 2001.

\bibitem{ThurstonNotes}
William~P. Thurston.
\newblock {The Geometry and Topology of Three-Manifolds}.
\newblock (Electronic version from 2002 available at
  http://www.msri.org/publications/books/gt3m/), 1980.

\bibitem{Zieschang}
Heiner Zieschang.
\newblock Classification of {M}ontesinos knots.
\newblock In {\em Topology ({L}eningrad, 1982)}, volume 1060 of {\em Lecture
  Notes in Math.}, pages 378--389. Springer, Berlin, 1984.

\end{thebibliography}

\textsc{Universit\'e de Provence, LATP UMR 6632 du CNRS}
	
\textsc{CMI, Technop\^ole de Ch\^ateau Gombert}

\textsc{39, rue F. Joliot Curie, 13453 Marseille CEDEX 13, France}
	
{paoluzzi@cmi.univ-mrs.fr}

\medskip

\textsc{Departament de Matem\`atiques, Universitat Aut\`onoma de Barcelona.}

\textsc{08193 Bellaterra, Spain}

{porti@mat.uab.es}

\end{footnotesize}

\end{document}